\newtheorem{theorem}{Theorem}[section]
\newtheorem{proposition}{Proposition}[section]
\newtheorem{lemma}{Lemma}[section]
\newtheorem{definition}{Definition}[section]
\newtheorem{example}{Example}[section]
\newtheorem{corollary}{Corollary}[section]
\newtheorem{remark}{Remark}[section]
\newcommand{\R}{\mathbb{R}}
\newcommand{\h}{\mathbb{H}}
\newcommand{\s}{\mathbb{S}}
\newcommand{\ria}{\rightarrow}
\newcommand{\de}{\partial}
\newcommand{\om}{\omega}
\newcommand{\n}{\nabla}
\newcommand{\ran}{\rangle}
\newcommand{\lan}{\langle}
\newcommand{\ve}{\varepsilon}
\newcommand{\vp}{\varphi}
\DeclareMathOperator{\hess}{Hess}
\DeclareMathOperator{\ric}{Ric}
\DeclareMathOperator{\rad}{rad}
\DeclareMathOperator{\di}{div}
\DeclareMathOperator{\tr}{tr}
\DeclareMathOperator{\degree}{degree}
\DeclareMathOperator{\trace}{trace}
\DeclareMathOperator{\scal}{scal}
\DeclareMathOperator{\genus}{genus}
\numberwithin{equation}{section}
\title[Stability of CMC surfaces in warped products]{Stability of constant mean curvature surfaces in three dimensional warped product manifolds}
\author{Greg\'orio Silva Neto}
\date{March 06, 2019}
\address{Instituto de Matemática\\
Universidade Federal de Alagoas\\ 
Macei\'o, AL, 57072-900, Brasil\\}
\email{gregorio@im.ufal.br}
\begin{document}
\subjclass[2010]{Primary 53C42; Secondary 53C21}

\keywords{Stability; Warped product manifolds; Constant mean curvature}

\footnotetext{G. Silva Neto was partially supported by the National Council for Scientific and Technological Development - CNPq of Brazil.}

\begin{abstract}

In this paper we prove that stable, compact without boundary, oriented, nonzero constant mean curvature surfaces in the de Sitter-Schwarzschild and Reissner-Nordstrom manifolds are the slices, provided its mean curvature satisfies some positive lower bound. More generally, we prove that stable, compact without boundary, oriented nonzero constant mean curvature surfaces in a large class of three dimensional warped product manifolds are embedded topological spheres, provided the mean curvature satisfies a positive lower bound depending only on the ambient curvatures. We conclude the paper proving that a stable, compact without boundary, nonzero constant mean curvature surface in a general Riemannian is a topological sphere provided its mean curvature has a lower bound depending only on the scalar curvature of the ambient space and the squared norm of the mean curvature vector field of the immersion of the ambient space in some Euclidean space.
\end{abstract}
\maketitle

\section{Introduction}
\label{intro}
In 1984, see \cite{BdC}, Barbosa and do Carmo introduced the notion of stability of compact hypersurfaces with nonzero constant mean curvature in the Euclidean space. They proved that the only compact, without boundary, hypersurfaces with nonzero constant mean curvature of the Euclidean space are the round spheres. Later, in 1988, in a joint work with Eschenburg, see \cite{BdCE}, Barbosa and do Carmo extended the notion of stability for hypersurfaces of a general Riemannian manifold and proved that the only compact, without boundary, nonzero constant mean curvature hypersurfaces of the Euclidean sphere and the hyperbolic space are the geodesic spheres. 

Briefly speaking, a compact, without boundary, nonzero constant mean curvature surface $\Sigma$ of a three dimensional Riemannian manifold is stable if, and only if, it is a local minimum of the area functional under all normal variations which preserve volume. This means that $J''(0)(f)\geq 0$ for all smooth function $f:\Sigma\rightarrow\R$ satisfying 
\[
\int_\Sigma f d\Sigma =0,
\]
where
\begin{equation}\label{jacobi}
\begin{split}
J''(0)(f) &= -\int_\Sigma \left[f\Delta_\Sigma f + (\ric_M(N,N) + \|A\|^2)f^2\right] d\Sigma\\    
          &= \int_\Sigma \left[|\n_\Sigma f|^2 - (\ric_M(N,N) + \|A\|^2)f^2\right]d\Sigma. 
\end{split}
\end{equation}

Here, $\ric_M(N,N)$ denotes the Ricci tensor of $M$ in the direction of the unitary vector field $N,$ normal to $\Sigma,$ $\Delta_\Sigma f$ denotes the Laplacian of $f$ over $\Sigma,$ $\n_\Sigma f$ denotes the gradient of $f$ over $\Sigma,$ and $\|A\|^2$ denotes the squared norm of the second fundamental form of $\Sigma.$ We refer to \cite{BdCE} for a detailed discussion of the subject.

Since the metrics of the space forms of constant sectional curvature $c\in\R$ can be written in polar coordinates as $\langle\cdot,\cdot\rangle= dt^2 + h(t)^2d\om^2,$ where
\[
h(t)=t \ \mbox{for} \ \R^3, \ h(t) = \dfrac{1}{\sqrt{c}}\sin(\sqrt{c}t)\ \mbox{for} \ \s^3(c),\ h(t) =\dfrac{1}{\sqrt{-c}}\sinh(\sqrt{-c}t) \ \mbox{for}\ \h^3(c),
\]
and $d\om^2$ denotes the canonical metric of the two-dimensional round sphere $\s^2,$ then is natural to ask if we can classify the compact, without boundary, stable nonzero constant mean curvature surfaces in the more general class of three-dimensional Riemannian manifolds $M^3=I\times\s^2,$ where $I=(0,b)$ or $I=(0,\infty),$ with the metric 
\begin{equation}\label{warped}
\langle\cdot,\cdot\rangle =dt^2+h(t)^2d\om^2,    
\end{equation}
with a more general smooth function $h:I\rightarrow\R.$ With the metric (\ref{warped}), the product $M^3=I\times\s^2$ is called a warped product manifold. These manifolds were first introduced by Bishop and O' Neill in 1969, see \cite{B-ON}, and is having increasing importance due to its applications as model spaces in general relativity. Part of these applications comes from the metrics which are solutions of the Einstein equations, as the de Sitter-Schwarzschild metric and Reissner-Nordstrom metric, which we introduce below.

\begin{definition}[The de Sitter-Schwarzschild manifolds]{\normalfont Let $m>0$ and $c\in\R.$ Let 
\[
(s_0,s_1)=\{r>0 ; 1-mr^{-1}-cr^2>0\}.
\] 
If $c\leq 0,$ then $s_1=\infty.$ If $c>0,$ assume that $cm^2<4/27.$ The de Sitter-Schwarzschild manifold is defined by $M^3(c)=(s_0,s_1)\times \s^2$ endowed with the metric
\[
\langle\cdot,\cdot\rangle=\dfrac{1}{1-mr^{-1}-cr^2}dr^2 + r^2 d\om^2.
\]
In order to write the metric in the form (\ref{warped}), define $F:[s_0,s_1)\rightarrow \R$ by
\[
F'(r)=\dfrac{1}{\sqrt{1-mr^{-1}-cr^2}}, \ F(s_0)=0.
\]
Taking $t=F(r),$ we can write $\langle\cdot,\cdot\rangle=dt^2+h(t)^2d\om^2,$ where $h:[0,F(s_1))\rightarrow[s_0,s_1)$ denotes the inverse function of $F.$ The function $h(t)$ clearly satisfies
\begin{equation}\label{defi-SS}
h'(t)=\sqrt{1-mh(t)^{-1}-ch(t)^2},\ h(0)=s_0,\ \mbox{and}\ h'(0)=0.
\end{equation}
}
\end{definition}

\begin{definition}[The Reissner-Nordstrom manifolds]\label{ex-RN}

{\normalfont The Reissner-Nordstrom manifold is defined by $M^3=(s_0,\infty)\times\s^2,$ with the metric
\[
\langle\cdot,\cdot\rangle=\dfrac{1}{1-mr^{-1}+q^2r^{-2}}dr^2 + r^2 d\om^2,
\]
where $m>2q>0$ and $s_0=2q^2/(m-\sqrt{m^2-4q^2})$ is the larger of the two solutions of $1-mr^{-1}+q^2r^{-2}=0.$ In order to write the metric in the form (\ref{warped}), define $F:[s_0,\infty)\rightarrow \R$ by
\[
F'(r)=\dfrac{1}{\sqrt{1-mr^{-1}+q^2r^{-2}}}, \ F(s_0)=0.
\]
Taking $t=F(r),$ we can write $\langle\cdot,\cdot\rangle=dt^2+h(t)^2d\om^2,$ where $h:[0,\infty)\rightarrow[s_0,\infty)$ denotes the inverse function of $F.$ The function $h(t)$ clearly satisfies
\begin{equation}\label{defi-RN}
h'(t)=\sqrt{1-mh(t)^{-1}+q^2h(t)^{-2}},\ h(0)=s_0,\ \mbox{and}\ h'(0)=0.
\end{equation}
}
\end{definition}
\begin{remark}
{\normalfont
Since the warped product manifold is smooth at $t=0$ if, and only if, $h(0)=0, \ h'(0)=1,$ and all the even order derivatives are zero at $t=0$, i.e., $h^{(2k)}(0) = 0,\ k > 0,$ see \cite{petersen}, Proposition 1, p. 13, we can see the de Sitter-Schwarzschild manifolds and the Reissner-Nordstrom manifolds are singular at $t=0.$
}
\end{remark}

In \cite{Brendle}, Brendle proved that the only compact, embedded, nonzero constant mean curvature hypersurfaces of a wide class of $n$-dimensional warped product manifolds, including the de Sitter-Schwarzschild and Reissner-Nordstrom manifolds are the slices $\{r_0\}\times\s^{n-1}.$ This inspire us to ask if we can replace the assumption of embeddedness by stability and obtain the same kind of result. This is reinforced by the fact that slices are stable in these spaces, see Proposition \ref{slice}, p. \pageref{slice}. For dimension 2, this is the subject of the next theorems.

\begin{theorem}\label{Stab-SS}
Let $\Sigma$ be a compact, without boundary, stable, constant mean curvature $H\neq 0$ surface of the de Sitter-Schwarzschild manifold. If $ \Sigma\subset [r_0,s_1)\times\s^2\subset M^3,$ $r_0\in(s_0,s_1),$ and 
\[
H^2\geq \dfrac{m}{2r_0^3} - c
\]
then $\Sigma$ is a slice.
\end{theorem}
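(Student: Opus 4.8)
The plan is to run the Barbosa--do Carmo--Eschenburg scheme: feed a volume-preserving test function adapted to the warped-product structure into the stability inequality \eqref{jacobi}, show the inequality can hold only if $\Sigma$ is totally umbilic, and then identify totally umbilic constant mean curvature surfaces in the de Sitter--Schwarzschild manifold with the slices. For the test function, use the conformal vector field $X=h(t)\de_t$ on $M^3(c)$, which satisfies $\bn_Y X=h'(t)\,Y$ for every $Y$; orienting $\Sigma$ so that $H>0$ and setting $\sigma=\lan X,N\ran$, the Minkowski-type identity $\di_\Sigma X^\top=2h'+2H\sigma$ gives $\int_\Sigma(h'+H\sigma)\,d\Sigma=0$, so $f:=h'|_\Sigma+H\sigma$ is admissible in \eqref{jacobi} (and $f\equiv0$ on every slice, consistent with Proposition \ref{slice}). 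Next record the two structural facts, both consequences of $h''=\frac{m}{2h^2}-ch$ and $h'''=-h'(\frac{m}{h^3}+c)$: the de Sitter--Schwarzschild metric has constant scalar curvature $\scal_M=6c$, and $V:=h'(t)$ is a static potential, $\hess_M V=V(\ric_M-3c\lan\cdot,\cdot\ran)$. Writing $L:=\Delta_\Sigma+\|A\|^2+\ric_M(N,N)$, so that $J''(0)(f)=-\int_\Sigma fLf\,d\Sigma$ and stability means $\int_\Sigma fLf\,d\Sigma\le0$ for every $f$ with $\int_\Sigma f\,d\Sigma=0$, and using the Codazzi equation with $H$ constant (which gives $\sum_i(\n_{e_i}A)e_i=(\ric_M(N))^\top$) together with the static equation, one computes
\[
L\sigma=2\Big(\tfrac{h''}{h}+\ric_M(N,N)\Big)\sigma-2Hh',\qquad L(V|_\Sigma)=\|A\|^2h'+2H\tfrac{h''}{h}\,\sigma ,
\]
and hence, since $4\tfrac{h''}{h}+2\ric_M(N,N)=\tfrac{3m}{h^3}\big(1-\lan\de_t,N\ran^2\big)$,
\[
Lf=(\|A\|^2-2H^2)h'+\tfrac{3mH}{h^3}\big(1-\lan\de_t,N\ran^2\big)\sigma .
\]

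Substituting this into $\int_\Sigma fLf\,d\Sigma\le0$ and expanding, using $\int_\Sigma h'\,d\Sigma=-H\int_\Sigma\sigma\,d\Sigma$ together with the integrated forms of the two identities above, the goal is to bring the integral to a manifestly nonnegative expression. The ingredients are: the pointwise inequality $\|A\|^2\ge2H^2$, with equality exactly at umbilic points (Cauchy--Schwarz in dimension two); the hypothesis $H^2\ge\frac{m}{2r_0^3}-c\ge\frac{m}{2h^3}-c=\frac{h''}{h}$ on $\Sigma$, which holds because $h\ge r_0$ there; $m>0$; and $h'>0$ on $\Sigma$ (since $\Sigma$ is compact and contained in $[r_0,s_1)\times\s^2$, so $h(\Sigma)\subset[r_0,R]\subset(s_0,s_1)$). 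Using these, the remaining terms are forced to acquire a definite sign and the stability inequality collapses to $\int_\Sigma(\|A\|^2-2H^2)(h')^2\,d\Sigma\le0$; since $h'>0$, this yields $\|A\|^2\equiv2H^2$, i.e. $\Sigma$ is totally umbilic.

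Finally, if $A=HI$ with $H$ constant then $\n A\equiv0$, so the Codazzi equation forces $\overline R_M(X,Y)N\perp T\Sigma$ for all $X,Y\in T\Sigma$. Because $m>0$, the sectional curvatures of the de Sitter--Schwarzschild metric along planes containing $\de_t$ differ from those tangent to the spherical factor — indeed $(h')^2-hh''=1-\frac{3m}{2h}$ is not constant — so this condition can hold only if $N=\pm\de_t$ at every point; by connectedness $\Sigma$ is a slice $\{r_0\}\times\s^2$. The main obstacle is the middle step: the term $\tfrac{3mH}{h^3}(1-\lan\de_t,N\ran^2)\sigma$ in $Lf$ has no definite sign (both $\sigma$ and $f$ change sign in general), so the rearrangement of $\int_\Sigma fLf\,d\Sigma$ into a nonnegative quantity is precisely where the curvature lower bound $H^2\ge\frac{m}{2r_0^3}-c$ must be used in full, and where the sign conventions for $A$, $H$ and $\sigma$ must be pinned down so that $f$ indeed vanishes on the slices.
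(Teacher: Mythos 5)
Your route is genuinely different from the paper's: you propose the direct Barbosa--do Carmo--Eschenburg/Montiel scheme, testing stability against $f=h'+H\sigma$ built from the conformal field $h(t)\de_t$ and trying to force umbilicity, whereas the paper never produces umbilicity from stability directly. Instead it balances a low-degree meromorphic map $\phi:\Sigma\to\s^2$ (Meis' theorem), combines the resulting inequality with Souam's Willmore-type bound to get $\int_\Sigma(2H^2+\ric_M(N,N))\,d\Sigma\le 8\pi(1-k)$, observes that the hypothesis $H^2\ge m/2r_0^3-c=\sup_\Sigma\{-K_{\rad}\}$ makes the integrand pointwise nonnegative via $\ric_M(N,N)=2K_{\rad}+(K_{\tan}-K_{\rad})(1-\nu^2)\ge 2K_{\rad}$, concludes $\genus(\Sigma)\le1$ and embeddedness, and only then invokes Brendle's Alexandrov-type theorem to identify the surface as a slice. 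Your approach, if completed, would be more self-contained (no appeal to Brendle), but it is not completed.

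The genuine gap is the one you flag yourself: the passage from $\int_\Sigma fLf\,d\Sigma\le0$ to $\int_\Sigma(\|A\|^2-2H^2)(h')^2\,d\Sigma\le0$. Expanding $fLf$ with your own formula produces the cross term $\tfrac{3mH}{h^3}(1-\nu^2)\,h'\sigma$ (plus the harmless $\tfrac{3mH^2}{h^3}(1-\nu^2)\sigma^2\ge0$), and $h'\sigma$ has no sign. The theorem's hypothesis controls $-K_{\rad}=\tfrac{m}{2h^3}-c$, while this term carries the coefficient $2(K_{\tan}-K_{\rad})=\tfrac{3m}{h^3}$; there is no demonstrated mechanism by which $H^2\ge -K_{\rad}$ absorbs an indefinite-sign integral of $h'\sigma$, and no integration-by-parts identity is offered that converts it into squares. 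As written, the "collapse to a manifestly nonnegative expression" is an assertion, not a proof, and it is exactly the step on which the whole argument rests. (A secondary, fixable gap: in the final Codazzi step, $(\overline{R}(X,Y)N)^\top=0$ forces at each point either $\nu^2=1$ or $\nu=0$; you must still exclude the alternative $\nu\equiv0$, e.g.\ because then $t|_\Sigma$ would have no critical points on a compact surface.) To repair the argument along the paper's lines, the key pointwise fact to extract from your curvature data is simply $2H^2+\ric_M(N,N)\ge 2H^2+2K_{\rad}\ge0$, which is what the hypothesis is actually calibrated to give.
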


\begin{theorem}\label{Stab-RN}
Let $\Sigma$ be a compact, without boundary, stable, constant mean curvature $H\neq 0$ surface of the Reissner-Nordstrom manifold. If $\Sigma\subset [r_0,\infty)\times\s^2\subset M^3,$ $r_0\in(s_0,\infty),$ $2q\leq\sqrt{15}m/4,$ and
\[
H^2\geq \dfrac{1}{2r_0^3}\left(m - \dfrac{2q^2}{r_0}\right),
\]
then $\Sigma$ is a slice.
\end{theorem}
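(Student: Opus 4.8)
My plan is to deduce Theorem~\ref{Stab-RN} from a more general result, proved first, together with Brendle's uniqueness theorem \cite{Brendle}. The general result I would establish is: a stable, compact without boundary, oriented CMC $H\neq 0$ surface in an admissible three dimensional warped product $I\times_h\s^2$ satisfying $H^2\geq\sup_\Sigma\frac{h''}{h}$ is an embedded topological sphere (the de Sitter--Schwarzschild statement, Theorem~\ref{Stab-SS}, being handled identically). Granting it, the proof of Theorem~\ref{Stab-RN} reduces to two checks: that the Reissner--Nordstrom metric is admissible both for this result and for \cite{Brendle}, and that the hypotheses $H^2\geq\frac{1}{2r_0^3}(m-\frac{2q^2}{r_0})$, $\Sigma\subset[r_0,\infty)\times\s^2$, $2q\leq\sqrt{15}\,m/4$ imply $H^2\geq\sup_\Sigma\frac{h''}{h}$. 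Then $\Sigma$ is an embedded topological sphere by the general result, and hence a slice by \cite{Brendle}.

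For the quantitative check, differentiating $(h')^2=1-mh^{-1}+q^2h^{-2}$ gives $h''=\frac{m}{2h^2}-\frac{q^2}{h^3}$, so along $\Sigma$ one has $\frac{h''}{h}=\mu(h)$ with $\mu(r):=\frac{m}{2r^3}-\frac{q^2}{r^4}$; since $\mu(r_0)=\frac{1}{2r_0^3}(m-\frac{2q^2}{r_0})$, the hypothesis is $H^2\geq\mu(r_0)$. Now $\mu'(r)=\frac{1}{2r^5}(8q^2-3mr)$, so $\mu$ is nonincreasing on $[\frac{8q^2}{3m},\infty)$; and $r_0>s_0=\frac{m+\sqrt{m^2-4q^2}}{2}\geq\frac{5m}{8}\geq\frac{8q^2}{3m}$ (here $s_0$ is the larger root as in Definition~\ref{ex-RN}, and the hypothesis $2q\leq\sqrt{15}\,m/4$, equivalently $q^2\leq\frac{15}{64}m^2$, is used twice, through $\sqrt{m^2-4q^2}\geq m/4$ and through $64q^2\leq15m^2$). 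Hence $\frac{h''}{h}=\mu(h)\leq\mu(r_0)\leq H^2$ on $\Sigma$, since $h\geq r_0$ there, and the constant $\sqrt{15}/4$ is sharp for this step. This is what the general result needs, because with the warped--product formulas $\ric_M(N,N)=\frac{m}{2h^3}(1-3\theta^2)+\frac{2q^2\theta^2}{h^4}$ ($\theta=\langle\de_t,N\rangle$) and $\scal_M=2q^2h^{-4}$ one has the identity
\[
\ric_M(N,N)+\|A\|^2=(\|A\|^2-2H^2)+2\Big(H^2-\frac{h''}{h}\Big)+\frac{1-\theta^2}{h^4}\Big(\frac{3mh}{2}-2q^2\Big),
\]
whose three terms are nonnegative — the first by Cauchy--Schwarz, the second by the above, the third because $h\geq s_0>m/2$ forces $\frac{3mh}{2}>\frac{3m^2}{4}>2q^2$ — so the zero order term $\ric_M(N,N)+\|A\|^2$ of the Jacobi operator in (\ref{jacobi}) is positive off a thin set; the same formulas serve to verify the structural hypotheses of \cite{Brendle}.

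For the general result itself I would follow Barbosa--do Carmo and Barbosa--do Carmo--Eschenburg: feed into the stability inequality (\ref{jacobi}) a test function adapted to the ambient geometry — for a warped product, one built from the closed conformal field $X=h(t)\de_t$, namely from $\langle X,N\rangle$, from $h'(t)|_\Sigma$, and from the potential $\sigma$ with $\sigma'=h$ (for which $\Delta_\Sigma\sigma=2h'+2H\langle X,N\rangle$), shifted to have zero mean or, in the genus zero case, balanced by a Hersch conformal normalization; in the general Riemannian version one uses instead the coordinates of a Nash isometric embedding of $M$ into Euclidean space, which is where the dependence on $\scal_M$ and on the mean curvature vector of that embedding enters. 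Combining the resulting integral inequality with the Gauss equation $K_\Sigma=\frac12\scal_M-\ric_M(N,N)+2H^2-\frac12\|A\|^2$ and Gauss--Bonnet, the curvature positivity obtained above (equivalently: the Jacobi operator has Morse index at most one and nonnegative zero order term) forces $\int_\Sigma K_\Sigma>0$, hence $\chi(\Sigma)>0$, so that $\Sigma$ is a topological sphere; the warped structure then upgrades this to embeddedness.

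The main obstacle is precisely this general step. Hersch balancing presupposes the conformal type of $\s^2$, so the case $\genus(\Sigma)\geq1$ — where Gauss--Bonnet instead gives $\int_\Sigma K_\Sigma\leq0$ and a different, more delicate stability estimate is required — must be excluded separately, and that exclusion is the technical heart of the argument. Upgrading an immersed topological sphere to an embedded one (so that \cite{Brendle} applies in the exterior region of the singular set $\{t=0\}$) is a second nontrivial point. Lastly, the bookkeeping above must be done without slack, the constant $\sqrt{15}/4$ being forced rather than incidental.
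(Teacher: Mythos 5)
Your overall route coincides with the paper's: reduce to a general stability theorem for warped products under $H^2\geq\sup_\Sigma\{-K_{\rad}(t)\}=\sup_\Sigma h''/h$, then invoke Brendle's Corollary 1.3 of \cite{Brendle} to upgrade the embedded surface to a slice. Your quantitative verification is correct and is in fact more explicit than the paper's own write-up: the paper simply asserts $\sup_\Sigma\frac{1}{2h^3}\left(m-\frac{2q^2}{h}\right)=\frac{1}{2r_0^3}\left(m-\frac{2q^2}{r_0}\right)$, and your computation — $-K_{\rad}=\mu(h)$ with $\mu(r)=\frac{m}{2r^3}-\frac{q^2}{r^4}$ nonincreasing for $r\geq 8q^2/(3m)$, together with $s_0=\frac{1}{2}\left(m+\sqrt{m^2-4q^2}\right)\geq\frac{5m}{8}\geq\frac{8q^2}{3m}$ exactly when $q^2\leq\frac{15}{64}m^2$ — is the missing justification of that line and correctly locates where the hypothesis $2q\leq\sqrt{15}m/4$ enters and why it is sharp for this step. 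Your decomposition of $\ric_M(N,N)+\|A\|^2$ is equivalent to the paper's use of Corollary \ref{ricci-N} to obtain $2H^2+\ric_M(N,N)\geq 2H^2+2K_{\rad}(t)\geq0$, which is the inequality Theorem \ref{Stab-Main}(i) actually feeds into the stability argument.

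The genuine gap is the general step itself, which you defer and whose proof your sketch would not produce. The paper's Theorem \ref{Stab-Main} rests on two ingredients absent from your plan. First, for arbitrary genus $g$ there is (Meis \cite{meis}, via Brill--Noether theory) a meromorphic map $\phi:\Sigma\rightarrow\s^2$ with $\degree(\phi)\leq 1+\left[\frac{g+1}{2}\right]$; Hersch balancing is applied to $\phi$ (composing with a M\"obius transformation of the \emph{target} $\s^2$), so it does not presuppose that $\Sigma$ is a sphere, and inserting the three coordinates of $\phi$ into (\ref{jacobi}) together with Gauss--Bonnet yields the genus-dependent bound (\ref{ric-genus-1}). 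Your proposed test functions built from the conformal field $h(t)\de_t$, or a Hersch normalization of $\Sigma$'s own coordinates, give no control in the case $\genus(\Sigma)\geq1$, which you yourself flag as the heart of the matter. Second, embeddedness comes from Souam's Proposition \ref{souam-0} (conformal invariance of $\int_\Sigma(H^2+K_s)$ in the conformally flat ambient plus the Li--Yau multiplicity bound), which forces $\int_\Sigma(H^2+K_s)\geq8\pi$ for a non-embedded surface and hence, via (\ref{ric-genus-2}) and the nonnegativity of $2H^2+\ric_M(N,N)$, a contradiction; your outline offers no mechanism for embeddedness, which is indispensable since \cite{Brendle} requires it. (Note also that the paper only obtains $\genus(\Sigma)\leq1$ at this stage, not genus zero; that is enough because Brendle's result needs embeddedness alone.) As written, then, the proposal correctly performs the reduction and the hypothesis verification but leaves the theorem it reduces to unproved.
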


\begin{remark}
{\normalfont
The slices $\{r\}\times\s^2$ satisfy the hypothesis of Theorem \ref{Stab-SS} if $r\geq 3m/2$ and satisfy the hypothesis of Theorem \ref{Stab-RN} if $r\geq (3m+\sqrt{9m^2-32q^2})/4$. In fact the slices satisfy the hypothesis of Theorems \ref{Stab-SS} and \ref{Stab-RN} when the mean curvature $H(r)$ of the slice $\{r\}\times\s^2$ is a decreasing function of $r.$ We prove this fact in a more general setting in Remark \ref{slice-2}, p. \pageref{slice-2} (Fig. \ref{fig-1}).
\begin{figure}[h]
\begin{center}
\includegraphics[scale=0.15]{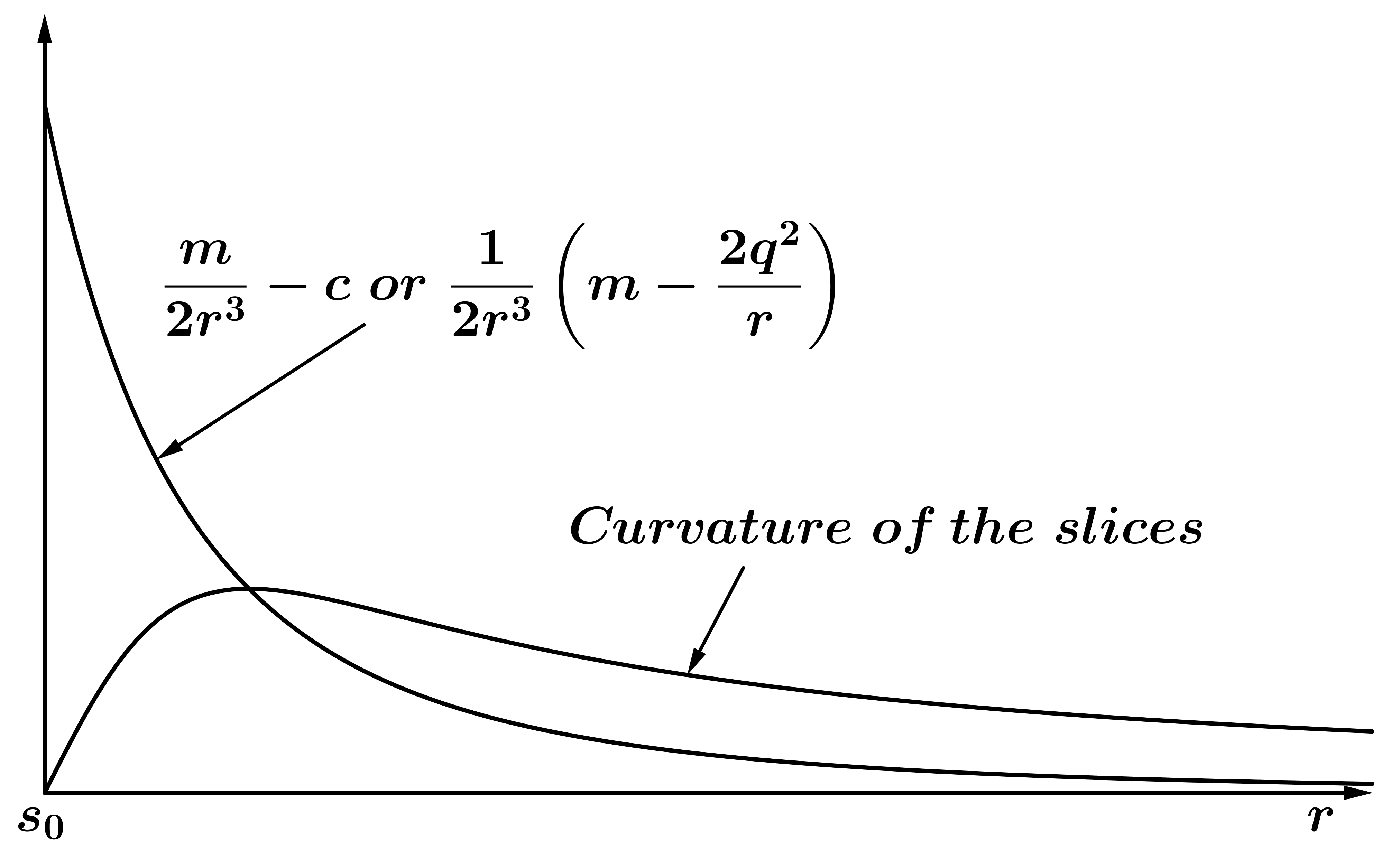}
\end{center}
\caption{Representation of the curvature of the slices compared with the hypothesis of Theorems \ref{Stab-SS} and \ref{Stab-RN}}
\label{fig-1}
\end{figure}
}
\end{remark}

\begin{remark}
{\normalfont
Since the de Sitter-Schwarzschild manifolds become the space forms by taking $m\rightarrow 0,$ from Theorem \ref{Stab-SS} we obtain, for dimension $2,$ the results of Barbosa, do Carmo and Eschenburg in $\R^3$ and the open hemisphere $\s^3_+,$ for every $H>0,$ and in $\h^3$ for $H\geq1.$
}
\end{remark}

In a more general setting, the warped product manifold $M^3=I\times\s^2$ with metric (\ref{warped}) has two different sectional curvatures, called tangential and radial curvatures, respectively:
\[
K_{\tan}(t)= K(X,Y) = \dfrac{1-h'(t)^2}{h(t)^2} \ \mbox{and} \ K_{\rad}(t)=K(X,\n t)=-\dfrac{h''(t)}{h(t)},
\]
where $\n t$ is the radial direction in polar coordinates and $X,Y\in TM$ satisfy $X,Y\perp \n t.$ 

In recent years, immersions in warped product manifolds have been extensively studied, see for example, \cite{aledo}, \cite{gimeno}, \cite{GIR}, \cite{sal-sal}, \cite{bessa}, \cite{Brendle}, \cite{BCL}, \cite{AIR}, \cite{AD-2}, \cite{AD-1} and \cite{montiel-1}.

In 2013, Brendle, see \cite{Brendle}, proved that, if $M^n$ is a $n$-dimensional warped product manifold whose sectional curvatures satisfy the inequality
\begin{equation}\label{brendle-c}
\dfrac{d K_{\rad}(t)}{dt}\leq (n-2)\frac{h'(t)}{h(t)}(K_{\tan}(t) - K_{\rad}(t)),
\end{equation}
then every compact, without boundary, embedded, constant mean curvature $H\neq 0$ hypersurface of $M$ is umbilic. Under some additional conditions on $M$, he proved that these surfaces must be a slice or a geodesic sphere. Condition (\ref{brendle-c}) seems to be necessary since, when the condition fails, there are small nonzero constant mean curvature spheres which are not umbilic (see Theorem 1.5, p. 250 of \cite{Brendle}).

On the other hand, as we can see in Proposition \ref{slice}, p. \pageref{slice}, the slices are stable if and only if $K_{\tan}(t)\geq K_{\rad}(t).$ Thus, the slices are not the natural candidates for every situation. Despite we do not know who are the natural candidates for every situation, in the next theorems we can prove that the compact, without boundary, stable constant mean curvature $H\neq0$ surfaces of a wide class of warped product manifolds, if exists, are spheres in a certain way. In the case when $K_{\tan}(t)>0,$ which means that $M^3$ can be immersed as hypersurface of revolution of $\R^4,$ we can prove:

\begin{theorem}\label{theo-warped-1}
Let $M^3=I\times\s^2$ be a warped product manifold, with metric $\langle\cdot,\cdot\rangle=dt^2 + h(t)^2d\om^2,$ such that, for every $t\in I,$ $K_{\tan}(t)>0$ and
\[
0\leq K_{\rad}(t) \leq (2+\sqrt{5})K_{\tan}(t).
\] 
If $\Sigma$ is a compact, without boundary, stable, constant mean curvature $H\neq0$ surface of $M^3,$ then $\genus(\Sigma)=0$ and $\Sigma$ is embedded.
\end{theorem}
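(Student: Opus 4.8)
The strategy is to combine the stability inequality \eqref{jacobi} with a well-chosen test function to force strong geometric restrictions on $\Sigma$, then identify the topology via a Hersch-type / Gauss--Bonnet argument. Since $K_{\tan}(t)>0$, the ambient manifold $M^3$ isometrically embeds as a rotation hypersurface in $\R^4$; I would use this to produce coordinate functions $x_1,\dots,x_4$ on $M^3$ whose restrictions to $\Sigma$ can be balanced by a conformal (Möbius) rearrangement so that $\int_\Sigma x_i\, d\Sigma = 0$ for each $i$. Feeding each such $x_i$ into \eqref{jacobi} and summing yields
\[
\sum_{i=1}^4 \int_\Sigma |\n_\Sigma x_i|^2\, d\Sigma \ \geq\ \int_\Sigma (\ric_M(N,N)+\|A\|^2)\Big(\sum_{i=1}^4 x_i^2\Big)\, d\Sigma.
\]
The left side is controlled by the extrinsic geometry of the embedding $\Sigma\hookrightarrow M^3\hookrightarrow\R^4$: $\sum_i|\n_\Sigma x_i|^2 = 2$ plus a term involving the mean curvature vector of $\Sigma$ in $\R^4$, which splits into the contribution of $H$ (constant) and the contribution of the mean curvature vector of $M^3$ in $\R^4$, the latter being expressible through $K_{\tan}$ and $K_{\rad}$. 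The right side is bounded below using $\ric_M(N,N)\geq 2\min\{K_{\tan},K_{\rad}\}>0$ (from the curvature hypotheses) together with $\|A\|^2\geq 2H^2$.

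The next step is to extract a pointwise differential inequality. After the conformal balancing, $\sum_i x_i^2$ is not constant, so I would instead work with the position vector and use that $\sum_i x_i^2$ equals a fixed quantity depending only on $h$; here one exploits that a rotation hypersurface of $\R^4$ over the profile determined by $h$ has $|X|^2$ a function of $t$ alone. Comparing the integral inequality above with the Gauss equation for $\Sigma\subset M^3$,
\[
2K_\Sigma = \scal_M - 2\ric_M(N,N) + H^2\cdot(\text{const}) - \|A\|^2 + \text{(terms)},
\]
and integrating via Gauss--Bonnet, $\int_\Sigma K_\Sigma\, d\Sigma = 2\pi\chi(\Sigma) = 4\pi(1-\genus(\Sigma))$, I would arrange the constant $2+\sqrt5$ in the hypothesis precisely so that the resulting inequality reads $4\pi(1-\genus(\Sigma)) > 0$, forcing $\genus(\Sigma)=0$. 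Then $\Sigma$ is a sphere, and an immersed constant mean curvature sphere in a $3$-manifold is embedded by the Alexandrov-type reflection argument available here (or by invoking that genus-zero CMC surfaces in these warped products are embedded, as in Brendle's framework).

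The sharp constant $2+\sqrt5$ surely emerges from optimizing a quadratic: after collecting all terms, the inequality will have the shape $aH^2 + b\,K_{\tan} - \lambda K_{\rad} \geq (\text{positive multiple of }4\pi(1-\genus))$ where the coefficients carry the conformal-balancing constant; requiring this to be usable for \emph{all} admissible $H$ (in particular letting the $H^2$ term only help) reduces to $K_{\rad}\leq (2+\sqrt5)K_{\tan}$, the root of $\lambda^2 - 4\lambda - 1 = 0$.

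\textbf{Main obstacle.} The crux is making the conformal (Hersch) balancing compatible with the warped-product geometry: one must choose the right $\R^4$-embedding of $M^3$, verify that $|X|^2$ is radial, and track how the Möbius rearrangement interacts with the term $\sum_i|\n_\Sigma x_i|^2$ so that the extrinsic mean-curvature contributions are still bounded by the ambient curvatures after balancing — this is where the factor $2+\sqrt5$ must be shown to be exactly what the algebra allows, and getting that constant (rather than a weaker one) is the delicate point.
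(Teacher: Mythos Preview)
Your plan has the right skeleton --- embed $M^3$ as a rotation hypersurface in $\R^4$ and then plug test functions built from the Euclidean coordinates into the stability inequality --- but the choice of test functions and the balancing mechanism are wrong, and this is where the argument breaks down. A Hersch/M\"obius balancing of the four coordinate functions $x_1,\dots,x_4$ is not available: the image of $\Sigma$ in $\R^4$ does not lie on a round sphere, so there is no conformal group acting on the target that would let you force $\int_\Sigma x_i\,d\Sigma=0$ simultaneously (you even note that $\sum_i x_i^2$ fails to be constant after balancing, which already signals that the argument cannot close). The paper instead assumes $\genus(\Sigma)\ge 1$ and takes a nonzero \emph{harmonic vector field} $X$ on $\Sigma$ (together with $JX$); the test functions are $u_i=\langle X,E_i\rangle$ and $u_i^*=\langle JX,E_i\rangle$, which satisfy $\int_\Sigma u_i\,d\Sigma=0$ automatically because $\di X=0$. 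A Bochner identity then gives a pointwise formula for $Q(X,X)+Q(JX,JX)$ in terms of $H^2$, $\scal_M$, and the second fundamental form $\overline{II}$ of $M^3\subset\R^4$; writing $\overline{II}=a\langle\cdot,\cdot\rangle+\ve a\,dt^2$ with $a=\sqrt{K_{\tan}}$ and $\ve=K_{\rad}/K_{\tan}-1$, one finds $Q(X,X)+Q(JX,JX)=\bigl(4H^2+a^2p(1-\nu^2)\bigr)\|X\|^2$ for an explicit quadratic $p$, and $p\ge 0$ on $[0,1]$ exactly when $-1\le\ve\le 1+\sqrt{5}$, i.e.\ $0\le K_{\rad}\le(2+\sqrt5)K_{\tan}$. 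So your guess that $2+\sqrt5$ is the root of a quadratic is correct, but it is $\ve^2-2\ve-4=0$ for the parameter $\ve$ above, and it arises from a pointwise sign, not from a Gauss--Bonnet integral.

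Your embeddedness step is also a genuine gap. Neither Alexandrov reflection nor Brendle's theorem applies to an \emph{immersed} CMC sphere in these warped products; Brendle's result takes embeddedness as a hypothesis, and there is no reflection symmetry across arbitrary slices. In the paper, embeddedness is obtained by a completely separate argument (Theorem~\ref{Stab-Main}): one uses a meromorphic map $\phi:\Sigma\to\s^2$ of bounded degree as the test function, combines stability with Gauss--Bonnet and Souam's Willmore-type inequality (Proposition~\ref{souam-0}), and shows that the non-embedded case forces the equality $\int_\Sigma(H^2+K_s)\,d\Sigma=4\pi$, hence $\Sigma$ is a totally umbilic sphere --- contradicting non-embeddedness. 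The hypothesis $0\le K_{\rad}\le(2+\sqrt5)K_{\tan}$ is then checked to imply the curvature condition needed for Theorem~\ref{Stab-Main}, so embeddedness follows for every $H\neq0$.
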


\begin{remark}
{\normalfont 
As a particular cases of Theorem \ref{theo-warped-1} we obtain the results of Souam, see \cite{souam}, in $\s^2\times\R$ and the result of Barbosa, do Carmo and Eschenburg, see \cite{BdC}, for $\s^3.$ In fact in $\s^2\times\R$ we have $K_{\tan}(t)=1$ and $K_{\rad}(t)=0,$ and in $\s^3$ we have $K_{\tan}(t)=K_{\rad}(t)=1.$ The results then follow by using the Abresch-Rosenberg Hopf type theorem, see \cite{AR}, for $\s^2\times\R$ and the classical Hopf theorem for $\s^3.$
}
\end{remark}

If $M^3$ is a warped product manifold whose sectional curvatures $K_{\tan}(t)$ and $K_{\rad}(t)$ do not satisfy the hypothesis of Theorem \ref{theo-warped-1}, we can obtain the same conclusion of such theorem if the mean curvature of the stable surface satisfies some lower bound depending only on $M^3:$

\begin{theorem}\label{theo-warped-2}
Let $M^3=I\times\s^2$ be a warped product manifold with metric $\langle\cdot,\cdot\rangle=dt^2 + h(t)^2d\om^2$ such that, for every $t\in I,$ 
\[
K_{\rad}(t)<0<K_{\tan}(t) \ \ \mbox{or} \ \ K_{\rad}(t)\geq (2+\sqrt{5})K_{\tan}(t)>0.
\]
There exists a constant $c_0=c_0(M)>0,$ depending only on $M,$ such that, if $\Sigma$ is a compact, without boundary, stable, constant mean curvature surface $H\neq0$ of $M^3,$ and
\[
H^2>c_0,
\] 
then $\genus(\Sigma)=0$ and $\Sigma$ is embedded.
\end{theorem}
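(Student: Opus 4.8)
The plan is to run the argument of Theorem~\ref{theo-warped-1}, with the lower bound $H^{2}>c_{0}$ replacing the curvature pinching $K_{\rad}\le(2+\sqrt5)K_{\tan}$ used there. In both regimes of the hypothesis one has $K_{\tan}(t)>0$ for all $t$, hence $|h'|<1$, so $M^{3}$ can be isometrically immersed in $\R^{4}$ as a hypersurface of revolution; let $\eta$ denote its unit normal and $S$ its shape operator, whose principal curvatures are the profile curvature $\kappa_{p}$ and the (double) rotational curvature $\kappa_{r}$, with $\kappa_{r}^{2}=K_{\tan}$ and $\kappa_{p}\kappa_{r}=K_{\rad}$. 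For $\Sigma\subset M^{3}\subset\R^{4}$ the normal space of $\Sigma$ in $\R^{4}$ is spanned by $N$ and $\eta$, and I would feed into the stability inequality~\eqref{jacobi} the admissible test functions $h_{v}=\langle W-\overline W,v\rangle$, $v\in\R^{4}$, where $W=aN+b\eta$ for parameters $a,b\in\R$ and $\overline W=\frac{1}{\mathrm{Area}(\Sigma)}\int_{\Sigma}W\,d\Sigma$ (so that $\int_{\Sigma}h_{v}\,d\Sigma=0$). Summing $J''(0)(h_{v})\ge0$ over $v$ running through an orthonormal basis of $\R^{4}$, and using $|N|=|\eta|=1$, $\langle N,\eta\rangle=0$, together with the flatness of $\R^{4}$ to evaluate $\sum_{v}|\n_{\Sigma}h_{v}|^{2}$ and $\sum_{v}h_{v}^{2}$, turns this (modulo correction terms coming from the recentring, which one checks are harmless) into the assertion that the $2\times2$ symmetric matrix $\mathcal Q=\mathcal Q(\Sigma)$ whose entries are the integrals over $\Sigma$ of $\ric_{M}(N,N)$ and of the second fundamental forms of $\Sigma$ in the directions $N$ and $\eta$ is positive semidefinite.

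The second step is to make $\mathcal Q$ explicit and to bring in the topology. Writing $N=\cos\beta\,\partial_{t}+\sin\beta\,\tau$ and $u=\cos^{2}\beta\in[0,1]$, the warped-product curvature formulas give $\ric_{M}(N,N)=(1+u)K_{\rad}+(1-u)K_{\tan}$, $|(SN)^{\top}|^{2}=u(1-u)(K_{\rad}-K_{\tan})^{2}/K_{\tan}$, and analogous expressions for the remaining entries, all rational in $K_{\rad}(t),K_{\tan}(t),u$. Substituting the Gauss equation $\|A\|^{2}=4H^{2}-2K_{\Sigma}+2\big((1-u)K_{\rad}+uK_{\tan}\big)$ and then the Gauss--Bonnet identity $\int_{\Sigma}K_{\Sigma}\,d\Sigma=2\pi\chi(\Sigma)=4\pi\big(1-\genus(\Sigma)\big)$ inserts $\genus(\Sigma)$ and $\int_{\Sigma}4H^{2}\,d\Sigma$ into $\mathcal Q$. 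Imposing $\mathcal Q\succeq0$ — nonnegativity of the diagonal entries and of the determinant — and eliminating $u$ by pointwise optimization over $[0,1]$, one arrives at an inequality of the shape
\[
4\,\mathrm{Area}(\Sigma)\,\big(H^{2}-c_{0}\big)\ \le\ 8\pi\big(1-\genus(\Sigma)\big),
\]
where $c_{0}=c_{0}(M)$ is an explicit supremum over $t\in I$ of a curvature combination. The optimization over $u$ is precisely where the quadratic $K_{\rad}^{2}-4K_{\rad}K_{\tan}-K_{\tan}^{2}$, whose relevant root — taken as the ratio $K_{\rad}/K_{\tan}$ — is $2+\sqrt5$, enters; thus $c_{0}\le0$, and no condition on $H$ is needed, exactly when $0\le K_{\rad}\le(2+\sqrt5)K_{\tan}$, which is Theorem~\ref{theo-warped-1}. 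In the two regimes considered here $c_{0}>0$, and the displayed inequality forces $\genus(\Sigma)=0$ as soon as $H^{2}>c_{0}$.

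Finally, once $\genus(\Sigma)=0$ the surface $\Sigma$ is an immersed topological sphere, and its embeddedness follows as in Theorem~\ref{theo-warped-1}, using that a genus-zero constant mean curvature surface in such a warped product is embedded (a Hopf-type rigidity). The two delicate points are the balancing — one must arrange the parameters $a,b$ and the recentring so that the matrix inequality $\mathcal Q\succeq0$ is genuinely non-degenerate — and the optimization over the inclination $u$ of $N$ that produces the sharp threshold $2+\sqrt5$ and, with it, the exact value of $c_{0}$; it is this optimization that separates the two regimes treated here from the pinched regime of Theorem~\ref{theo-warped-1}.
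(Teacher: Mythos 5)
Your proposal is not the paper's argument, and as written it has two genuine gaps. First, the core analytic step is asserted rather than carried out. The paper does not test stability against recentred normal coordinate functions $\langle aN+b\eta-\overline{W},v\rangle$; it assumes $\genus(\Sigma)\geq 1$, takes the two harmonic vector fields $X$ and $JX$ that this guarantees, uses the Euclidean coordinates $u_i=\langle X,E_i\rangle$, $u_i^*=\langle JX,E_i\rangle$ (automatically of mean zero, with no recentring correction) as test functions, and computes $Q(X,X)+Q(JX,JX)$ exactly via a Bochner formula (Proposition \ref{main-1}); the genus enters through $\dim H^1(\Sigma,\R)=2\genus(\Sigma)$, not through Gauss--Bonnet. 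In your scheme the recentring terms are not obviously ``harmless'': $\int(\ric_M(N,N)+\|A\|^2)(g-\bar g)^2$ differs from $\int(\ric_M(N,N)+\|A\|^2)g^2$ by terms with no definite sign, and the passage from $\sum_v J''(0)(h_v)\geq 0$ to the displayed inequality $4\,\mathrm{Area}(\Sigma)(H^2-c_0)\leq 8\pi(1-\genus(\Sigma))$, including the claim that the optimization over $u$ reproduces the threshold $2+\sqrt5$ and the paper's explicit $c_0$, is a sketch of a shape of an argument, not a proof. It may be salvageable as a Reilly--Heintze type argument, but you would have to actually do the computation; nothing in the paper certifies that it closes.

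Second, and more decisively, the embeddedness step is wrong as stated. There is no ``Hopf-type rigidity'' asserting that a genus-zero constant mean curvature surface in such a warped product is embedded; the Hopf and Abresch--Rosenberg theorems are available only in the special cases $\s^3$ and $\s^2\times\R$ mentioned after Theorem \ref{theo-warped-1}, and Brendle's Theorem 1.5 shows that Hopf-type umbilicity can fail in warped products. In the paper, embeddedness comes from Theorem \ref{Stab-Main}: if $\Sigma$ were not embedded, Souam's inequality (\ref{souam-2}) combined with the balanced meromorphic-map test function of Ritor\'e--Ros forces $\int_\Sigma(2H^2+\ric_M(N,N))\,d\Sigma\leq 0$, while the curvature hypotheses and the lower bound on $H^2$ make the integrand nonnegative, yielding a totally umbilic sphere and a contradiction. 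Moreover the paper must, and does, verify case by case that $H^2>c_0$ with the explicit $c_0$ of cases (a)--(c) implies the hypotheses of Theorem \ref{Stab-Main} (e.g.\ the elementary inequality $\tfrac14(x^2+4x-1)>x$ for $x>1$ in the case $-K_{\rad}(t)\geq K_{\tan}(t)>0$). Your proof needs this entire mechanism, or a replacement for it, before the embeddedness claim can stand.
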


By the embeddedness of the stable surfaces proved in Theorems \ref{theo-warped-1} and \ref{theo-warped-2} and by using the results of Brendle in \cite{Brendle}, we obtain the next corollary:

\begin{corollary}
Let $M^3=I\times\s^2$ be a warped product manifold with metric $\langle\cdot,\cdot\rangle=dt^2 + h(t)^2d\om^2$ such that
\[
\dfrac{d K_{\rad}(t)}{dt}\leq \frac{h'(t)}{h(t)}(K_{\tan}(t) - K_{\rad}(t)).
\]
There exists a constant $c_0=c_0(M)\geq0,$ depending only on $M,$ such that, if $\Sigma$ is a compact, without boundary, stable, constant mean curvature surface $H\neq0$ of $M^3,$ and $H^2 >c_0,$
then $\Sigma$ is umbilic. Moreover,
\begin{itemize}
\item[i)] if $h(0)=0,$ $h''(0)>0$ $h'(t)>0$ for all $t\in I,$ and $K_{\tan}(t) > K_{\rad}(t),$ then $\Sigma$ is a slice;

\item[ii)] if $h(t)=t\varphi(t^2),$ where $\varphi:(0,\sqrt{b})\rightarrow\R$ is a smooth function satisfying $\varphi(0)=1,$ $h'(t)>0$ for all $t\in I$ and $K_{\tan}(t)\neq K_{\rad}(t),$ then $\Sigma$ is a geodesic sphere centered in the origin.
\end{itemize}
\end{corollary}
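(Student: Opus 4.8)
The corollary follows by combining the embeddedness statements of Theorems~\ref{theo-warped-1} and~\ref{theo-warped-2} with Brendle's classification from \cite{Brendle}. First I would observe that the hypothesis
\[
\frac{dK_{\rad}(t)}{dt}\leq \frac{h'(t)}{h(t)}\bigl(K_{\tan}(t)-K_{\rad}(t)\bigr)
\]
is precisely condition~(\ref{brendle-c}) for $n=3$, since $(n-2)=1$ in that case. So any compact, without boundary, \emph{embedded}, constant mean curvature $H\neq 0$ surface of $M^3$ is umbilic by Brendle's theorem. The issue is that Theorems~\ref{theo-warped-1} and~\ref{theo-warped-2} require sign/ratio conditions on $K_{\tan},K_{\rad}$, so I would split into cases according to the sign of $K_{\rad}(t)$ and show that in every case one of those two theorems applies, possibly at the cost of a lower bound $H^2>c_0(M)$. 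When $K_{\tan}(t)>0$ everywhere and $0\leq K_{\rad}(t)\leq(2+\sqrt5)K_{\tan}(t)$, Theorem~\ref{theo-warped-1} gives embeddedness with $c_0=0$; when $K_{\rad}(t)<0<K_{\tan}(t)$ or $K_{\rad}(t)\geq(2+\sqrt5)K_{\tan}(t)>0$, Theorem~\ref{theo-warped-2} gives embeddedness for $H^2>c_0(M)$. Taking $c_0$ to be the maximum of the relevant constants (and $0$ when Theorem~\ref{theo-warped-1} suffices) yields a single constant $c_0=c_0(M)\geq0$ for which $H^2>c_0$ forces $\Sigma$ to be embedded, hence umbilic.

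Once $\Sigma$ is umbilic, parts~(i) and~(ii) are exactly the additional rigidity statements in Brendle's paper (Theorem~1.1 and its companions in \cite{Brendle}), and I would simply quote them. For~(i): under $h(0)=0$, $h''(0)>0$, $h'(t)>0$ on $I$, the warped product is a smooth rotationally symmetric manifold of the type Brendle treats, and the strict inequality $K_{\tan}(t)>K_{\rad}(t)$ rules out geodesic spheres not centered at the origin and the nonconstant CMC umbilic examples, leaving only the slices $\{r_0\}\times\s^2$. For~(ii): when $h(t)=t\varphi(t^2)$ with $\varphi(0)=1$, the metric extends smoothly across the origin (by the parity criterion recalled in the Remark after Definition~\ref{ex-RN}), so the origin is a genuine point of $M^3$; Brendle's argument then shows an umbilic CMC surface must be a geodesic sphere, and since $K_{\tan}(t)\neq K_{\rad}(t)$ excludes the possibility that $\Sigma$ is a slice (a slice would be totally umbilic only in a space form situation, which is excluded here), the only option is a geodesic sphere centered at the origin.

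The main obstacle, and the only genuinely new content beyond bookkeeping, is verifying that the dichotomy in the hypotheses of Theorems~\ref{theo-warped-1} and~\ref{theo-warped-2} actually \emph{covers} every warped product satisfying Brendle's inequality—i.e. that there is no warped product with $K_{\tan}(t)\leq 0$ somewhere, or with $K_{\rad}(t)$ straddling the band $\bigl(0,(2+\sqrt5)K_{\tan}(t)\bigr)$ at different points of $I$, for which Brendle's inequality holds but neither stability theorem applies. I would address this by noting that the hypotheses of the corollary are to be read as \emph{also} assuming the ambient curvature configuration of one of the two theorems (this is how the constant $c_0=c_0(M)$ is defined—it is finite only under those configurations), so the corollary is really the conjunction "Brendle's inequality plus one of the two curvature regimes." Under that reading the proof is the three-line synthesis above; I would make the logical structure explicit so the reader sees that no case is lost, and I would point out that $c_0=0$ is achieved exactly in the regime of Theorem~\ref{theo-warped-1}, which is why the corollary states $c_0\geq 0$ rather than $c_0>0$.
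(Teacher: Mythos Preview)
Your plan matches the paper exactly: the corollary is stated with no separate proof beyond the sentence preceding it, which says that the embeddedness from Theorems~\ref{theo-warped-1} and~\ref{theo-warped-2} combined with Brendle's results in~\cite{Brendle} yields the conclusion; your reading that the curvature regimes of those two theorems (in particular $K_{\tan}>0$) are implicitly assumed, with $c_0=0$ in the regime of Theorem~\ref{theo-warped-1}, is the intended one. One minor correction to your gloss on part~(ii): slices in a warped product are \emph{always} totally umbilic, so $K_{\tan}\neq K_{\rad}$ is not what rules them out---in the setting of~(ii) the origin is a smooth interior point and the slices \emph{are} the geodesic spheres centered there, while $K_{\tan}\neq K_{\rad}$ is simply Brendle's non-space-form hypothesis needed for his umbilic rigidity argument.
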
 

\begin{remark}
{\normalfont 
As far as we know, there are few results about stability of surfaces or hypersurfaces in warped product manifolds. We can cite, as examples, \cite{montiel-1} for stability of compact, without boundary, constant mean curvature hypersurfaces, and \cite{aledo} and \cite{BCL} for stability of minimal submanifolds in warped product manifolds.
}
\end{remark}

\begin{remark}
{\normalfont 
As pointed out in the appendix of \cite{BdCE}, the stability problem is closed related to the isoperimetric problem, i.e., to find the surfaces with least area bounding a domain of given volume. Such surfaces are stable if they are smooth. For the space forms, the geodesic spheres are the solutions of the isoperimetric problem. In \cite{B-M}, Bray and Morgan proved that the slices are uniquely minimizing among all the surfaces enclosing the horizon $\{s_0\}\times\s^2$ for a class of warped manifolds, including the Schwarzschild manifold. The same result is true, see \cite{corvino}, for a class of warped product manifolds which includes the de Sitter-Schwarzschild manifold and Reissner-Nordstrom manifold. On the other hand, in \cite{ritore}, Ritor\'e has constructed examples of warped product surfaces such that there are no solutions of the isoperimetric problem for any volume. 
}
\end{remark}

We conclude the paper with a theorem for compact, without boundary, stable, nonzero constant mean curvature surfaces in general three-dimensional Riemannian manifolds isometrically immersed in some $\R^n.$

\begin{theorem}\label{theo-stab-general}
Let $M^3$ be a three-dimensional Riemannian manifold and $\Sigma$ be a compact, without boundary, stable, constant mean curvature $H\neq0$ surface of $M^3.$ Let $\scal_M$ be the normalized scalar curvature of $M$ and $\mathcal{H}$ be the normalized mean curvature vector of $M^3$ in $\R^n,\ n\geq 4.$ If 
\[
H^2>-3\inf_{\Sigma}\left[\scal_M - \frac{3}{4}\|\mathcal{H}\|^2\right],
\]
then $\genus(\Sigma)=0.$
\end{theorem}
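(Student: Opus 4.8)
The plan is to use the stability inequality (\ref{jacobi}) with a cleverly chosen test function and combine it with the Gauss equation together with the Gauss--Bonnet theorem, exactly in the spirit of the Barbosa--do Carmo--Eschenburg argument but adapted so that the only geometric inputs are the scalar curvature of $M$ and the mean curvature vector $\mathcal H$ of $M$ in $\R^n$. First I would recall that since $\Sigma$ is stable and has $H\neq0$, for \emph{every} $f$ with $\int_\Sigma f\,d\Sigma=0$ one has $\int_\Sigma |\n_\Sigma f|^2\,d\Sigma \ge \int_\Sigma (\ric_M(N,N)+\|A\|^2)f^2\,d\Sigma$. The standard trick is to take $f\equiv 1$; this is not admissible because it does not have zero mean, but one repairs it in the usual way: either apply the inequality to $f=g-\bar g$ where $g$ is a first eigenfunction of a suitable operator, or, more simply, use that the admissibility constraint only costs a term controlled by the first nonzero eigenvalue. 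The cleanest route is the classical one: since $\int_\Sigma \|A\|^2 + \ric_M(N,N)\,d\Sigma$ must be $\le 0$ would follow from $f\equiv 1$ if it were admissible, and the mean-zero defect can be absorbed because $\Sigma$ is stable, not just weakly stable. Concretely I would choose $f$ to be the restriction of the coordinate functions of $\R^n$ (minus their averages), mimicking Barbosa--do Carmo: summing the stability inequality over an orthonormal frame of $\R^n$ gives $\sum_i \int_\Sigma |\n_\Sigma x_i|^2 \ge \sum_i \int_\Sigma (\ric_M(N,N)+\|A\|^2) x_i^2$ after translating so that $\int_\Sigma x_i\,d\Sigma=0$ for each $i$, which is possible by a translation of $\R^n$.

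Next I would evaluate the two sides. On the left, $\sum_i |\n_\Sigma x_i|^2 = \dim\Sigma = 2$ pointwise (the $x_i$ are ambient coordinates and $\Sigma$ is $2$-dimensional), so the left side equals $2\,\mathrm{Area}(\Sigma)$. On the right I need the constraint $\sum_i x_i^2 = \rho^2$, which I cannot control; instead the right approach is \emph{not} to use coordinate functions but simply $f\equiv 1$ together with a spectral bound, OR to follow the route that produces the scalar curvature. Let me restart the second half: the genuinely useful identity is the one relating $\ric_M(N,N)+\|A\|^2$ to intrinsic quantities. By the Gauss equation, $2K_\Sigma = \scal_M^{\text{(unnorm)}} - 2\ric_M(N,N) + 4H^2 - \|A\|^2$ (with appropriate normalization conventions), so $\ric_M(N,N) + \|A\|^2 = \tfrac12\scal_M^{\text{(unnorm)}} + 2H^2 - K_\Sigma$ after rearranging; here one must be careful with the factor relating $\scal_M^{\text{(unnorm)}}$ to the normalized $\scal_M$ used in the statement. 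Then, taking $f\equiv1$ (which I make admissible by the stability-plus-translation trick above, i.e. testing with $x_i-\bar x_i$ and using $\sum|\n x_i|^2=2$, $\sum x_i^2 = \rho^2 \le$ something — this is where $\mathcal H$ enters), the stability inequality becomes an estimate of the form $C\cdot\mathrm{Area}(\Sigma) \ge \int_\Sigma(\tfrac12\scal_M^{\text{(unnorm)}} + 2H^2 - K_\Sigma)\,d\Sigma$. By Gauss--Bonnet, $\int_\Sigma K_\Sigma\,d\Sigma = 2\pi\chi(\Sigma) = 4\pi(1-\genus(\Sigma))$, so I obtain $4\pi(1-\genus(\Sigma)) \ge \int_\Sigma \big(\tfrac12\scal_M^{\text{(unnorm)}} + 2H^2 - C\big)\,d\Sigma$.

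The role of the immersion $M^3\hookrightarrow\R^n$ is precisely to produce the constant $C$: one uses the Gauss equation for $M$ in $\R^n$ to bound $\ric_M(N,N)$ from below, or one uses the coordinate functions $x_1,\dots,x_n$ of $\R^n$ restricted to $\Sigma$ as the test functions in (\ref{jacobi}), for which $\Delta_\Sigma x_i = 2\mathbf H_\Sigma^{\R^n}\!\cdot e_i$ where $\mathbf H_\Sigma^{\R^n}$ is the mean curvature vector of $\Sigma$ in $\R^n$; this decomposes into the part along $\Sigma\subset M$ (which vanishes against the sum since $\Sigma$ is CMC in $M$ appropriately, contributing $H$) and the part coming from $\mathcal H$. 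Summing over $i$ and using $\sum_i(\Delta_\Sigma x_i)^2 = 4\|\mathbf H_\Sigma^{\R^n}\|^2$, together with $\|\mathbf H_\Sigma^{\R^n}\|^2 = H^2 + (\text{contribution of }\mathcal H)$ and the inequality $\|\mathbf H_\Sigma^{\R^n}\|^2 \le H^2 + \tfrac34\|\mathcal H\|^2$ or similar (from orthogonally decomposing the second fundamental form of $\Sigma$ in $\R^n$ as that of $\Sigma$ in $M$ plus that of $M$ in $\R^n$), one arrives at $\int_\Sigma \scal_M\,d\Sigma$ appearing with the right coefficients after invoking the Gauss equation for $M\subset\R^n$ to write $\scal_M$ in terms of the full second fundamental form of $M$ and $\|\mathcal H\|^2$. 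Assembling, the stability inequality plus Gauss--Bonnet yields
\[
4\pi\big(1-\genus(\Sigma)\big) \;\ge\; \int_\Sigma \Big(H^2 + 3\scal_M - \tfrac94\|\mathcal H\|^2\Big)\,d\Sigma
\]
up to the precise bookkeeping of constants, and under the hypothesis $H^2 > -3\inf_\Sigma[\scal_M - \tfrac34\|\mathcal H\|^2]$ the integrand is pointwise positive, forcing $1-\genus(\Sigma)>0$, i.e. $\genus(\Sigma)=0$.

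The main obstacle, and the step requiring real care, is the bookkeeping of curvature conventions and normalization factors: getting the Gauss equations for $\Sigma\subset M$ and for $M\subset\R^n$ to combine with the correct coefficients so that exactly the combination $\scal_M - \tfrac34\|\mathcal H\|^2$ (with the factor $3$) emerges, and verifying that the test functions $x_i - \bar x_i$ (after the translation making each have zero mean over $\Sigma$) are genuinely admissible and that the cross terms in $\|\mathbf H_\Sigma^{\R^n}\|^2$ between the $M$-direction and the $\R^n$-directions have the right sign or can be discarded. A secondary subtlety is ensuring the single translation of $\R^n$ can simultaneously zero out all $n$ integrals $\int_\Sigma x_i\,d\Sigma$ — but this is immediate since translating $x_i\mapsto x_i - a_i$ with $a_i = \mathrm{Area}(\Sigma)^{-1}\int_\Sigma x_i\,d\Sigma$ does exactly that, and the Laplacian and gradient of $x_i$ are translation-invariant, so the geometric identities above are unaffected.
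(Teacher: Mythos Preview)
Your proposal has a genuine gap: the test functions you choose cannot be made to work, and the missing idea is the use of \emph{harmonic vector fields}.

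When you sum the stability inequality over the coordinate functions $x_i-\bar x_i$, the right-hand side is $\int_\Sigma(\ric_M(N,N)+\|A\|^2)\sum_i(x_i-\bar x_i)^2\,d\Sigma$, and $\sum_i(x_i-\bar x_i)^2$ is \emph{not} a constant on $\Sigma$. You notice this (``$\sum_i x_i^2=\rho^2$, which I cannot control'') and then try to pivot to $f\equiv 1$ plus Gauss--Bonnet, but $f\equiv 1$ is not admissible and no ``stability-plus-translation trick'' repairs that: translation only shifts the coordinate functions, it does not make the constant function mean-zero. The displayed inequality $4\pi(1-\genus(\Sigma))\ge \int_\Sigma(H^2+3\scal_M-\tfrac94\|\mathcal H\|^2)\,d\Sigma$ is asserted ``up to bookkeeping of constants'', but in fact no choice of constants makes a valid derivation from your ingredients.

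The paper's route is different in kind. One argues by contradiction: if $\genus(\Sigma)\ge 1$, then $\Sigma$ carries a nonzero harmonic vector field $X$, and $JX$ is also harmonic. The test functions are $u_i=\langle X,E_i\rangle$, $u_i^*=\langle JX,E_i\rangle$ (with $E_i$ the standard basis of $\R^n$); these are automatically mean-zero because $u_i=\di(x_iX)$ and $X$ is divergence-free. The crucial computation (Proposition~\ref{main-1}) is a \emph{pointwise} identity
\[
Q(X,X)+Q(JX,JX)=(4H^2+6\scal_M)\|X\|^2-\sum_{\alpha}\sum_{i}\big[\overline{II}^\alpha(e_i,X)^2+\overline{II}^\alpha(e_i,JX)^2\big],
\]
where the common factor $\|X\|^2$ (not an uncontrolled $\rho^2$) appears because $\{X,JX\}$ is an orthogonal frame of $T\Sigma$. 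Bounding the last sum by $\sum_\alpha\|\overline{II}^\alpha\|^2\|X\|^2$ and using the Gauss equation for $M\subset\R^n$, namely $\sum_\alpha\|\overline{II}^\alpha\|^2=9\|\mathcal H\|^2-6\scal_M$, gives $Q(X,X)+Q(JX,JX)\ge(4H^2+12\scal_M-9\|\mathcal H\|^2)\|X\|^2$, which is strictly positive under the hypothesis. Integrating contradicts stability, so $\genus(\Sigma)=0$. No Gauss--Bonnet is needed; the genus enters only through the existence of $X$.
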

In particular, we have

\begin{corollary}\label{stab-minimal}
Let $M^3$ be a three-dimensional Riemannian manifold which can be minimally immersed in some $\R^n$ and $\Sigma$ be a compact, without boundary, stable, constant mean curvature $H\neq0$ surface of $M^3.$ If
\[
H^2>-3\inf_\Sigma \scal_M,
\]
then $\genus(\Sigma)=0.$
\end{corollary}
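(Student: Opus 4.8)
The Corollary is the case $\mathcal{H}\equiv 0$ of Theorem~\ref{theo-stab-general}: a minimal immersion $M^{3}\hookrightarrow\R^{n}$ has vanishing mean curvature vector, so the bound $H^{2}>-3\inf_{\Sigma}[\scal_M-\tfrac34\|\mathcal{H}\|^{2}]$ collapses to $H^{2}>-3\inf_{\Sigma}\scal_M$ (and here $\scal_M\le 0$, since minimality forces $\scal_M=-\tfrac16\|\mathrm{II}_M\|^{2}$). Thus the substance is Theorem~\ref{theo-stab-general}, which I would attack by the Barbosa--do Carmo--Eschenburg / Ros scheme: feed the stability inequality (\ref{jacobi}) with test functions coming from a conformal map $\Sigma\to\s^{2}$, and read off the sign of $\chi(\Sigma)$ from Gauss--Bonnet. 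The immersion $M^{3}\hookrightarrow\R^{n}$ enters only to control the ambient term $\ric_M(N,N)$.

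The first step is a pointwise curvature inequality. Let $N$ be the unit normal of $\Sigma$ in $M$, $A$ its shape operator, $\{e_{1},e_{2}\}$ orthonormal in $T_{p}\Sigma$. The trace form of the Gauss equation of $\Sigma\subset M$ gives $\ric_M(N,N)=3\scal_M-K_M(T_{p}\Sigma)$, while the Gauss equation of $M\subset\R^{n}$ in the direction $N$ gives
\[
\ric_M(N,N)=\tfrac94\|\mathcal{H}\|^{2}-\big|\mathrm{II}_M(N,N)-\tfrac32\mathcal{H}\big|^{2}-|\mathrm{II}_M(e_{1},N)|^{2}-|\mathrm{II}_M(e_{2},N)|^{2}\;\le\;\tfrac94\|\mathcal{H}\|^{2},
\]
hence $K_M(T_{p}\Sigma)\ge 3\big(\scal_M-\tfrac34\|\mathcal{H}\|^{2}\big)$. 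Combining this with the Gauss equation $K_{\Sigma}=K_M(T_{p}\Sigma)+\det A$ of $\Sigma\subset M$ and with $\det A+\|A\|^{2}=4H^{2}-\kappa_{1}\kappa_{2}\ge 3H^{2}$ yields
\[
\ric_M(N,N)+\|A\|^{2}\;\ge\;3H^{2}+3\big(\scal_M-\tfrac34\|\mathcal{H}\|^{2}\big)-K_{\Sigma},
\]
with equality only at umbilic points; in particular the hypothesis makes $3H^{2}+3\big(\scal_M-\tfrac34\|\mathcal{H}\|^{2}\big)$ strictly positive on all of $\Sigma$.

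Second, I would choose a non-constant holomorphic map $\phi=(\phi_{1},\phi_{2},\phi_{3})\colon\Sigma\to\s^{2}\subset\R^{3}$ --- of degree $1$ if $\genus(\Sigma)=0$, and of degree at most $\lfloor(\genus(\Sigma)+3)/2\rfloor$ in general, by Riemann--Roch --- and, after composing with a conformal diffeomorphism of $\s^{2}$ (the Hersch--Li--Yau balancing), arrange $\int_{\Sigma}\phi_{i}\,d\Sigma=0$ for each $i$, so that each $\phi_{i}$ is admissible in (\ref{jacobi}). Summing (\ref{jacobi}) over $i=1,2,3$ and using $\sum_{i}\phi_{i}^{2}\equiv1$ and $\int_{\Sigma}\sum_{i}|\n_{\Sigma}\phi_{i}|^{2}\,d\Sigma=8\pi\deg\phi$ gives $\int_{\Sigma}(\ric_M(N,N)+\|A\|^{2})\,d\Sigma\le 8\pi\deg\phi$. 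Integrating the pointwise inequality above, invoking Gauss--Bonnet $\int_{\Sigma}K_{\Sigma}\,d\Sigma=2\pi\chi(\Sigma)$, and using the strict positivity coming from the hypothesis, one is left with a chain of inequalities incompatible with $\chi(\Sigma)\le 0$; hence $\chi(\Sigma)=2$ and $\genus(\Sigma)=0$.

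The step I expect to be the real obstacle is closing this last chain. The surviving term $8\pi\deg\phi$ has to be played off against $2\pi\chi(\Sigma)$ and against the area $|\Sigma|$, so one needs the degree of $\phi$ (via the gonality bound) and the area controlled tightly enough that no positive-genus $\Sigma$ can satisfy all the inequalities simultaneously. It is precisely here that the constants $3$ and $\tfrac34$ get pinned down, and possibly where one must also bring in the Willmore-type inequality $\int_{\Sigma}\|\vec{H}_{\Sigma}\|^{2}\,d\Sigma\ge 4\pi$ for the surface $\Sigma\subset\R^{n}$; the remaining ingredients are the standard second-variation and Gauss-equation computations.
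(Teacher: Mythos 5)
Your opening paragraph is exactly the paper's proof of the corollary: a minimal immersion has $\mathcal{H}\equiv 0$, so the hypothesis of Theorem \ref{theo-stab-general} collapses to $H^{2}>-3\inf_{\Sigma}\scal_M$, and the conclusion is immediate. Had you stopped there, the comparison would end there too. The remainder of your proposal, however, tries to prove Theorem \ref{theo-stab-general} itself by the conformal-map/balancing scheme, and that route has a genuine gap --- precisely where you suspect. Your pointwise inequality is correct, and feeding the balanced components of $\phi$ into (\ref{jacobi}) together with Gauss--Bonnet yields an estimate of the form
\[
\int_{\Sigma}\Big(3H^{2}+3\big(\scal_M-\tfrac34\|\mathcal{H}\|^{2}\big)\Big)\,d\Sigma\ \le\ 8\pi\deg\phi+2\pi\chi(\Sigma).
\]
But the gonality bound $\deg\phi\le 1+[(g+1)/2]$ grows like $g/2$, which exactly cancels the Gauss--Bonnet gain $2\pi\chi(\Sigma)=4\pi(1-g)$: the right-hand side is at most $16\pi$ for \emph{every} genus. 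The left-hand side is positive under the hypothesis, but nothing in the hypotheses bounds the area of $\Sigma$ from below, so no contradiction with $g\ge 1$ follows. (The paper uses this scheme only in Section 2, where Proposition \ref{souam-0} supplies the extra Willmore-type input $\int_\Sigma(H^2+K_s)\,d\Sigma\ge 4\pi$ under a conformal flatness assumption that Theorem \ref{theo-stab-general} does not make.)

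The paper's actual proof of Theorem \ref{theo-stab-general} avoids the degree and area bookkeeping entirely. If $\genus(\Sigma)\ge 1$, Hodge theory provides a nonzero harmonic $1$-form, hence a harmonic vector field $X$ and its rotation $JX$; the Euclidean components $u_{i}=\langle X,E_{i}\rangle$ and $u_{i}^{*}=\langle JX,E_{i}\rangle$ have mean zero by the divergence theorem and are therefore admissible in (\ref{jacobi}). A Bochner computation (Proposition \ref{main-1}) gives
\[
Q(X,X)+Q(JX,JX)=(4H^{2}+6\scal_M)\|X\|^{2}-\sum_{\alpha=4}^{n}\sum_{i=1}^{2}\big[\overline{II}^{\alpha}(e_{i},X)^{2}+\overline{II}^{\alpha}(e_{i},JX)^{2}\big],
\]
the last sum is at most $\sum_{\alpha}\|\overline{II}^{\alpha}\|^{2}\|X\|^{2}=(9\|\mathcal{H}\|^{2}-6\scal_M)\|X\|^{2}$ by the Gauss equation of $M\subset\R^{n}$, and so $Q(X,X)+Q(JX,JX)\ge(4H^{2}+12\scal_M-9\|\mathcal{H}\|^{2})\|X\|^{2}>0$ pointwise under the hypothesis, contradicting stability directly. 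To complete your write-up you would need to replace the conformal-map step by this harmonic-field argument (or otherwise supply the missing area control, which the hypotheses do not provide).
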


\begin{remark}\label{rem-Frensel}
{\normalfont The results above can be compared, for example, with the following result of K. Frensel, see \cite{frensel}:
\emph{
Let $\Sigma$ be a compact, without boundary, stable, constant mean curvature $H\neq0$ surface of a three-dimensional manifold $M^3.$ If
\[
H^2> - \dfrac{1}{2}\inf_{\Sigma} \ric_M
\]
where $\ric_M$ is the Ricci curvature of $M^3,$ then $\genus(\Sigma)\leq 3.$
}
}
\end{remark}

This paper is organized as follows. In Section 2 we prove Theorems \ref{Stab-SS} and {\ref{Stab-RN}}. In Section 3, we discuss the relation between harmonic vector fields and stability. In Section 4 we prove Theorems \ref{theo-warped-1} and Theorem \ref{theo-warped-2}. We conclude the paper in Section 5 proving Theorem \ref{theo-stab-general}.

\emph{Acknowledgements:} The author would like to thank Hil\'ario Alencar by helpful conversations during the preparation of this paper and to the anonymous referee by the useful observations.

\section{Proof of Theorems \ref{Stab-SS} and \ref{Stab-RN}}

We start with the following lemma, whose proof can be found in \cite{oneill}, p. 210, Proposition 42:

\begin{lemma}
Let $M^3=I\times \s^2$ be a warped product manifold with the warped metric $\langle\cdot,\cdot\rangle = dt^2 + h(t)^2d\om^2,$ where $d\om^2$ is the canonical metric of the round sphere $\s^2$ and $h:I\rightarrow\R$ is the smooth warping function. Denote by $\overline{R}$ the curvature tensor of $(M^3,\langle\cdot,\cdot\rangle).$ Then, for $U,V,W\in T\s^2$ (i.e., $U,V,W\perp \partial_t$, where $\partial_t$ is the dual vector field of $dt$),
\begin{itemize}

\item[i)] $\overline{R}(V,\partial_t)\partial_t = \dfrac{\hess h(\de_t,\de_t)}{h(t)}V = \dfrac{h''(t)}{h(t)}V = -K_{\rad}(t)V;$

\item[ii)] $\overline{R}(V,W)\de_t =0;$

\item[iii)] $\overline{R}(\de_t,V)W = \dfrac{\langle V,W\rangle}{h(t)}\overline{\n}_{\de_t}\overline{\n} h(t) = \langle V,W\rangle\dfrac{h''(t)}{h(t)}\de_t = -K_{\rad}(t)\langle V,W\rangle\de_t;$

\item[iv)] 
$\begin{array}{rl}
\overline{R}(U,V)W &= \dfrac{1}{h(t)^2}R^{\s^2}(U,V)W - \dfrac{\|\overline{\n} h(t)\|^2}{h(t)^2}[\langle U,W \rangle V - \langle V,W\rangle U]\\ 
&=K_{\tan}(t)[\langle U,W \rangle V - \langle V,W\rangle U],\\
\end{array}
$
\end{itemize}
where $\overline{\n}$ is the connection of $M^3$ and $R^{\s^2}$ is the curvature tensor of $\s^2.$

\end{lemma}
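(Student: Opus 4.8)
The statement is the standard warped-product curvature formula, so the plan is to first pin down the Levi-Civita connection $\bn$ of $(M^3,\lan\cdot,\cdot\ran)$ and then substitute into the curvature identity case by case. First I would apply the Koszul formula to $\lan\cdot,\cdot\ran=dt^2+h(t)^2 d\om^2$ to obtain the three families of connection coefficients: $\bn_{\de_t}\de_t=0$ (the base $I$ is flat), $\bn_{\de_t}V=\bn_V\de_t=\frac{h'(t)}{h(t)}V$ for $V$ tangent to $\s^2$, and $\bn_V W=\n^{\s^2}_V W-\frac{\lan V,W\ran}{h(t)}\,\bn h$ for $V,W$ tangent to $\s^2$, where $\bn h=h'(t)\de_t=\mathrm{grad}\,h$ and $\n^{\s^2}$ is the Levi-Civita connection of the round metric $d\om^2$. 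The only inputs here are metric compatibility, the torsion-free condition, and the fact that $h$ depends on $t$ alone, so that $V(h)=0$ for $V$ vertical.

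Next I would fix a point and extend $U,V,W$ to lifts of coordinate vector fields on $\s^2$, so that $[U,V]=[V,W]=[U,W]=0$ and each commutes with $\de_t$; on such frames the tensor reduces to $\overline{R}(X,Y)Z=\bn_Y\bn_X Z-\bn_X\bn_Y Z$ in O'Neill's sign convention. For (i), since $\bn_{\de_t}\de_t=0$, only $\bn_{\de_t}\bn_V\de_t=\bn_{\de_t}\!\big(\tfrac{h'}{h}V\big)$ survives, and differentiating in $t$ together with $\bn_{\de_t}V=\tfrac{h'}{h}V$ yields the scalar $\big(\tfrac{h'}{h}\big)'+\big(\tfrac{h'}{h}\big)^2=\tfrac{h''}{h}$, the one computational identity used throughout. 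Since $\bn h=h'\de_t$ gives $\bn_{\de_t}\bn h=h''\de_t$ and $\hess h(\de_t,\de_t)=h''$, the coefficient is $\tfrac{\hess h(\de_t,\de_t)}{h}=\tfrac{h''}{h}=-K_{\rad}$. For (ii), the expansion of $\overline{R}(V,W)\de_t$ collapses to $\tfrac{h'}{h}(\bn_W V-\bn_V W)=-\tfrac{h'}{h}[V,W]=0$ by torsion-freeness. For (iii), the same substitutions give $\overline{R}(\de_t,V)W=\tfrac{h''}{h}\lan V,W\ran\de_t$ once the vertical parts $\tfrac{h'}{h}\n^{\s^2}_V W$ cancel and the coefficients $(h')^2+hh''$ and $(h')^2$ of $\lan V,W\ran_{\s^2}\de_t$ combine; again $\tfrac{h''}{h}=-K_{\rad}$.

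The delicate case is (iv), where all three arguments are vertical. Here $\bn_V W$ carries a horizontal part $-\tfrac{\lan V,W\ran}{h}\bn h$ and a vertical part $\n^{\s^2}_V W$, so $\bn_U\bn_V W$ generates several groups of terms that I would organize as follows: the iterated vertical derivatives recombine into the fiber curvature $R^{\s^2}(U,V)W$; the $\de_t$-valued terms cancel under antisymmetrization by metric compatibility of $\n^{\s^2}$ together with $[U,V]=0$, so the output is again vertical (consistent with (ii)); and the terms coming from $\bn_U\de_t=\tfrac{h'}{h}U$ contribute the scalar $\tfrac{h'}{h}\cdot hh'=(h')^2=\|\bn h\|^2$, producing the correction $\|\bn h\|^2[\lan V,W\ran_{\s^2}U-\lan U,W\ran_{\s^2}V]$. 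Using $R^{\s^2}(U,V)W=\lan U,W\ran_{\s^2}V-\lan V,W\ran_{\s^2}U$ (round metric, curvature $+1$) and rewriting round inner products via $\lan\cdot,\cdot\ran_{\s^2}=h^{-2}\lan\cdot,\cdot\ran$, the common factor becomes $\tfrac{1-(h')^2}{h^2}=K_{\tan}$, giving $\overline{R}(U,V)W=K_{\tan}[\lan U,W\ran V-\lan V,W\ran U]$. The main obstacle throughout is bookkeeping rather than ideas: one must respect O'Neill's sign convention for $\overline{R}$ (which is what makes $\overline{R}(V,\de_t)\de_t=+\tfrac{h''}{h}V$ rather than its negative) and carefully separate the warped metric $\lan\cdot,\cdot\ran$ from the round metric $d\om^2$, most acutely in (iv) where the fiber curvature and the gradient-norm correction must be assembled with the correct scaling.
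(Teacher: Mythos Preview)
The paper does not prove this lemma at all; it simply cites O'Neill, \emph{Semi-Riemannian Geometry}, p.~210, Proposition~42. Your proposal supplies exactly the standard argument behind that citation: compute the warped-product connection from the Koszul formula and then substitute case by case, keeping track of O'Neill's sign convention for $\overline{R}$ and the scaling $\lan\cdot,\cdot\ran=h^2\lan\cdot,\cdot\ran_{\s^2}$ on vertical vectors. The computations you outline are correct, including the key simplification $\big(\tfrac{h'}{h}\big)'+\big(\tfrac{h'}{h}\big)^2=\tfrac{h''}{h}$ in (i), the use of torsion-freeness in (ii), the cancellation of the $\tfrac{h'}{h}\n^{\s^2}_V W$ terms in (iii), and the observation in (iv) that the $\de_t$-component of $\overline{R}(U,V)W$ vanishes by (ii) and the antisymmetry of $\overline{R}$ in its last two arguments. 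There is nothing to compare: you have written out the proof the paper chose to omit.
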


In the next proposition we will state a more suitable expression for the curvature tensor $\overline{R}.$

\begin{proposition}\label{tensor-curv}
Let $M^3=I\times \s^2$ be a warped product manifold with the warped metric $\langle\cdot,\cdot\rangle = dt^2 + h(t)^2d\om^2,$ where $d\om^2$ is the canonical metric of the round sphere $\s^2$ and $h:I\rightarrow\R$ is the smooth warping function. Denote by $\overline{R}$ the curvature tensor of $(M^3,\langle\cdot,\cdot\rangle).$ Then for $X,Y,Z\in TM,$
\[
\begin{split}
\overline{R}(X,Y)Z &= K_{\tan}(t)(\langle X,Z\rangle Y - \langle Y,Z\rangle X)\\
&\qquad - (K_{\tan}(t) - K_{\rad}(t))\left\langle\langle X,Z\rangle Y - \langle Y,Z\rangle X,\de_t\right\rangle \de_t\\
&\qquad - (K_{\tan}(t) - K_{\rad}(t))\langle Z,\de_t\rangle[\langle X,\de_t\rangle Y - \langle Y,\de_t\rangle X].
\end{split}
\]
\end{proposition}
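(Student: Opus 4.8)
The plan is to reduce the assertion to the four special cases collected in the previous lemma by splitting each argument into a component tangent to the $\s^2$-factor and a component along $\de_t$. Write $X=\overline{X}+\langle X,\de_t\rangle\de_t$, $Y=\overline{Y}+\langle Y,\de_t\rangle\de_t$, $Z=\overline{Z}+\langle Z,\de_t\rangle\de_t$, with $\overline{X},\overline{Y},\overline{Z}\perp\de_t$, and expand $\overline{R}(X,Y)Z$ by $C^\infty$-multilinearity. This produces eight terms. The two terms carrying $\de_t$ in both the first and the second slot vanish by the antisymmetry of $\overline{R}$ in its first two arguments, and the term $\overline{R}(\overline{X},\overline{Y})\de_t$ vanishes by item ii) of the lemma, leaving five terms.

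First I would evaluate these five terms using the lemma: item iv) gives $\overline{R}(\overline{X},\overline{Y})\overline{Z}$; items i) and iii), combined with the skew-symmetry of $\overline{R}$ (which introduces a sign when $\de_t$ is moved from the first to the second slot), give $\overline{R}(\overline{X},\de_t)\overline{Z}$, $\overline{R}(\de_t,\overline{Y})\overline{Z}$, $\overline{R}(\overline{X},\de_t)\de_t$ and $\overline{R}(\de_t,\overline{Y})\de_t$. Collecting everything yields the intermediate expression
\begin{equation*}
\begin{split}
\overline{R}(X,Y)Z &= K_{\tan}\left(\langle\overline{X},\overline{Z}\rangle\overline{Y}-\langle\overline{Y},\overline{Z}\rangle\overline{X}\right)\\
&\quad + K_{\rad}\left[\langle Y,\de_t\rangle\langle\overline{X},\overline{Z}\rangle-\langle X,\de_t\rangle\langle\overline{Y},\overline{Z}\rangle\right]\de_t\\
&\quad + K_{\rad}\langle Z,\de_t\rangle\left[\langle X,\de_t\rangle\overline{Y}-\langle Y,\de_t\rangle\overline{X}\right].
\end{split}
\end{equation*}

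To finish, I would substitute $\overline{X}=X-\langle X,\de_t\rangle\de_t$ (and likewise for $Y$, $Z$) back into this formula, using the polarization identity $\langle X,Z\rangle=\langle\overline{X},\overline{Z}\rangle+\langle X,\de_t\rangle\langle Z,\de_t\rangle$; adding and subtracting the appropriate $K_{\tan}$-multiples of the $\de_t$- and tangential-correction terms and regrouping then reorganizes the intermediate expression exactly into the three lines displayed in the proposition. (Equivalently, since both sides are $(1,3)$-tensors, one may instead simply check that the proposed closed formula reproduces items i)--iv) of the lemma on all combinations of $\de_t$ and vectors orthogonal to $\de_t$, which by multilinearity is enough.) There is no genuine conceptual obstacle here; the only point demanding care is the sign bookkeeping when applying the antisymmetry of $\overline{R}$ to the terms coming from items i) and iii), and keeping straight which inner products are taken with $\overline{X},\overline{Y},\overline{Z}$ versus $X,Y,Z$.
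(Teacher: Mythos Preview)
Your proposal is correct and follows essentially the same route as the paper: decompose $X,Y,Z$ into their $\de_t$- and $\s^2$-components, expand $\overline{R}(X,Y)Z$ by multilinearity, evaluate the surviving terms via items i)--iv) of the preceding lemma to obtain exactly the intermediate expression you wrote, and then substitute $\overline{X}=X-\langle X,\de_t\rangle\de_t$ (etc.) back in to reach the displayed formula. The paper carries out the final substitution explicitly rather than invoking the alternative ``check on basis combinations'' argument you mention, but this is only a difference in presentation.
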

\begin{proof}
Let $X=X_0 + a\de_t,$ $Y=Y_0 +  b\de_t$ and $Z=Z_0 + c\de_t,$ where $X_0,Y_0,Z_0\perp \de_t$ and $a=\langle X,\de_t\rangle,$ $b=\langle Y,\de_t\rangle$ and $c=\langle Z,\de_t\rangle.$ Then
\[
\begin{split}
\overline{R}(X,Y)Z &= \overline{R}(X_0 + a\de_t,Y_0 + b\de_t)(Z_0 + c\de_t)\\
                   &= \overline{R}(X_0,Y_0)Z_0 + c \overline{R}(X_0,Y_0)\de_t + b \overline{R} (X_0,\de_t)Z_0 + bc\overline{R}(X_0,\de_t)\de_t\\
                   &\quad + a\overline{R}(\de_t,Y_0)Z_0 + ac\overline{R}(\de_t,Y_0)\de_t\\
                   &= K_{\tan}(t)(\langle X_0,Z_0\rangle Y_0 - \langle Y_0,Z_0\rangle X_0)\\
                   &\quad  - K_{\rad}(t)(a\langle Y_0,Z_0\rangle - b\langle X_0,Z_0\rangle)\de_t - K_{\rad}(t)(bc X_0 - ac Y_0).\\  
\end{split}
\]
On the other hand, since
\[
\langle X_0,Y_0\rangle = \langle X,Y\rangle  - \langle X,\de_t\rangle\langle Y,\de_t\rangle,
\]
and analogously for $\langle X_0,Z_0\rangle$ and $\langle Y_0,Z_0\rangle,$ we have
\[
\begin{split}
\langle X_0,Z_0\rangle Y_0 - \langle Y_0,Z_0\rangle X_0 &= (\langle X,Z\rangle  - \langle X,\de_t\rangle\langle Z,\de_t\rangle)(Y-\langle Y,\de_t\rangle\de_t) \\
                                            &\qquad -(\langle Y,Z\rangle  - \langle Y,\de_t\rangle\langle Z,\de_t\rangle)(X-\langle X,\de_t\rangle\de_t)\\
                                            &=\langle X,Z\rangle Y - \langle X,Z\rangle\langle Y,\de_t\rangle\de_t  - \langle X,\de_t\rangle\langle Z,\de_t\rangle Y\\
                                            &\qquad - \langle Y,Z\rangle X + \langle Y,Z\rangle\langle X,\de_t\rangle\de_t + \langle Y,\de_t\rangle\langle Z,\de_t\rangle X\\ 
                                            &= (\langle X,Z\rangle Y - \langle Y,Z\rangle X) - \langle \langle X,Z\rangle Y - \langle Y,Z\rangle X, \de_t\rangle\de_t\\
                                            &\quad - \langle Z,\de_t\rangle(\langle X,\de_t\rangle Y - \langle Y,\de_t\rangle X),\\
a\langle Y_0,Z_0\rangle - b \langle X_0,Z_0\rangle& = \langle X,\de_t\rangle(\langle Y,Z\rangle - \langle Y,\de_t\rangle\langle Z,\de_t\rangle)\\
                                      & \qquad - \langle Y,\de_t\rangle (\langle X,Z\rangle - \langle X,\de_t\rangle \langle Z,\de_t\rangle)\\
                                      & =- \langle \langle X,Z\rangle Y - \langle Y,Z\rangle X, \de_t\rangle,\\ 
\end{split}
\]
and
\[
\begin{split}
bcX_0 - acY_0 & = \langle Y,\de_t\rangle\langle Z,\de_t\rangle (X - \langle X,\de_t\rangle\de_t) - \langle X,\de_t\rangle\langle Z,\de_t\rangle(Y-\langle Y,\de_t\rangle\de_t)\\
              & = \langle Z,\de_t\rangle(\langle Y,\de_t\rangle X - \langle X,\de_t\rangle Y).
\end{split}
\]
The result then follows.
 
\end{proof}

\begin{corollary}\label{ricci-N}
Let $M^3=I\times \s^2$ be a warped product manifold with the warped metric $\langle\cdot,\cdot\rangle = dt^2 + h(t)^2d\om^2,$ where $d\om^2$ is the canonical metric of the round sphere $\s^2$ and $h:I\rightarrow\R$ is the smooth warping function. Let $\Sigma$ be a surface of $M^3$ with unit normal vector field $N.$ Then
\begin{equation}\label{normal-ricci}
\ric_M(N,N)=2K_{\tan}(t) + (K_{\rad}(t) - K_{\tan}(t))(1+\nu^2)
\end{equation}
or yet
\begin{equation}\label{normal-ricci-2}
\ric_M(N,N)= 2K_{\rad}(t) + (K_{\tan}(t)-K_{\rad}(t))(1-\nu^2),
\end{equation}
where $\nu=\langle N,\partial_t\rangle.$
\end{corollary}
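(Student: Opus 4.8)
The plan is to compute $\ric_M(N,N)$ directly from the curvature tensor formula in Proposition \ref{tensor-curv}. Fix a point $p\in\Sigma$ and choose an orthonormal basis $\{e_1,e_2,N\}$ of $T_pM$ adapted so that $e_1,e_2$ span $T_p\Sigma$. By definition $\ric_M(N,N)=\sum_{i=1}^{2}\langle\overline{R}(e_i,N)N,e_i\rangle$, so the first step is to substitute $X=Z=e_i$, $Y=N$ (after reindexing the arguments of $\overline{R}$ appropriately, being careful with the antisymmetry $\overline{R}(X,Y)=-\overline{R}(Y,X)$) into the expression
\[
\overline{R}(X,Y)Z = K_{\tan}(\langle X,Z\rangle Y - \langle Y,Z\rangle X) - (K_{\tan}-K_{\rad})\langle\langle X,Z\rangle Y - \langle Y,Z\rangle X,\de_t\rangle\de_t - (K_{\tan}-K_{\rad})\langle Z,\de_t\rangle(\langle X,\de_t\rangle Y - \langle Y,\de_t\rangle X)
\]
and pair the result with $e_i$, then sum over $i=1,2$.

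The second step is the bookkeeping of inner products. Writing $\de_t$ in the adapted frame, $\de_t = \sum_{i}\langle\de_t,e_i\rangle e_i + \nu N$ with $\nu=\langle N,\de_t\rangle$, one has $\sum_{i=1}^{2}\langle e_i,\de_t\rangle^2 = 1-\nu^2$ (since $|\de_t|^2=1$). Taking $\overline{R}(e_i,N)N$ and dotting with $e_i$: the first term contributes $K_{\tan}(\langle e_i,N\rangle^2 - \langle N,N\rangle\langle e_i,e_i\rangle)\cdot(-1)$ up to sign conventions, i.e. $K_{\tan}$ per index, giving $2K_{\tan}$; the $\de_t\de_t$ term contributes $-(K_{\tan}-K_{\rad})$ times $\langle e_i,\de_t\rangle$ against the relevant coefficient; and the last term contributes $-(K_{\tan}-K_{\rad})\nu(\langle e_i,\de_t\rangle\langle N,e_i\rangle - \nu\langle e_i,e_i\rangle)$-type terms. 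Collecting everything and using $\sum\langle e_i,\de_t\rangle^2 = 1-\nu^2$, the $K_{\tan}$ pieces assemble into $2K_{\tan}$ and the $(K_{\tan}-K_{\rad})$ pieces assemble into a correction proportional to $(1-\nu^2)$ (or, equivalently, to $1+\nu^2$ after one rewrites using $1=(1-\nu^2)+\nu^2$), which yields formula (\ref{normal-ricci-2}); formula (\ref{normal-ricci}) then follows by the algebraic identity $2K_{\rad} + (K_{\tan}-K_{\rad})(1-\nu^2) = 2K_{\tan} + (K_{\rad}-K_{\tan})(1+\nu^2)$, which one checks by expanding both sides.

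The main obstacle — really the only place to slip — is sign and index discipline: getting the order of arguments in $\overline{R}(\cdot,\cdot)\cdot$ consistent with the sign convention used in Proposition \ref{tensor-curv} and in the definition of $\ric_M$, and not double-counting the contribution of $\de_t$ when it has a nonzero $N$-component. A clean way to avoid this is to observe that the trace $\ric_M(N,N)$ is basis-independent, so one may instead split $\de_t = \de_t^\top + \nu N$ with $\de_t^\top\in T_p\Sigma$, $|\de_t^\top|^2 = 1-\nu^2$, pick $e_1 = \de_t^\top/|\de_t^\top|$ when $\nu\neq\pm1$ (the cases $\nu=\pm1$ following by continuity), so that $\langle e_2,\de_t\rangle=0$. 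Then only the $i=1$ term of the sum sees the warping correction, and the computation reduces to two short scalar evaluations: $\langle\overline{R}(e_1,N)N,e_1\rangle = K_{\tan} - (K_{\tan}-K_{\rad})(1-\nu^2)$ and $\langle\overline{R}(e_2,N)N,e_2\rangle = K_{\tan}$, whose sum is exactly (\ref{normal-ricci-2}). Finally I would record the identity converting (\ref{normal-ricci-2}) into (\ref{normal-ricci}) as a one-line remark.
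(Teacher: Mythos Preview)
Your overall strategy is exactly the paper's: plug $N$ and an orthonormal frame into the formula of Proposition~\ref{tensor-curv}, take the trace, and use $\sum_i\langle e_i,\partial_t\rangle^2=1-\nu^2$. The paper carries this out with a generic orthonormal basis $\{\bar e_1,\bar e_2,\bar e_3\}$ of $T_pM$ and obtains \eqref{normal-ricci} first, then rearranges to \eqref{normal-ricci-2}; your idea of adapting the frame so that $e_1\parallel\partial_t^\top$ is a perfectly legitimate shortcut.

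However, the two sectional-curvature values you quote at the end are wrong, and their sum is \emph{not} \eqref{normal-ricci-2}. With your special frame (so $\langle e_1,\partial_t\rangle^2=1-\nu^2$, $\langle e_2,\partial_t\rangle=0$), the third term in Proposition~\ref{tensor-curv}, namely
\[
-(K_{\tan}-K_{\rad})\langle N,\partial_t\rangle\bigl(\langle e_i,\partial_t\rangle N-\langle N,\partial_t\rangle e_i\bigr),
\]
contributes $(K_{\tan}-K_{\rad})\nu^2$ to each diagonal entry, for \emph{both} $i=1$ and $i=2$, because of the $-\nu e_i$ piece. You dropped this contribution. The correct values (in the sense of sectional curvatures $K(e_i,N)$, consistent with the paper's sign convention) are
\[
K(e_1,N)=K_{\rad},\qquad K(e_2,N)=K_{\tan}(1-\nu^2)+K_{\rad}\,\nu^2,
\]
whose sum is $K_{\tan}+K_{\rad}-(K_{\tan}-K_{\rad})\nu^2=2K_{\rad}+(K_{\tan}-K_{\rad})(1-\nu^2)$, which is \eqref{normal-ricci-2}. (A quick sanity check: when $\nu=\pm1$, i.e.\ $N=\pm\partial_t$, one must get $\ric_M(N,N)=2K_{\rad}$; your formula $2K_{\tan}-(K_{\tan}-K_{\rad})(1-\nu^2)$ gives $2K_{\tan}$ there.) Ironically, this is precisely the ``$\partial_t$ has a nonzero $N$-component'' pitfall you flagged. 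Once these values are corrected, the rest of your write-up goes through and matches the paper's argument.
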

\begin{proof}
Let $\{\bar{e}_1,\bar{e}_2,\bar{e}_3\}$ be an orthonormal basis of $M^3.$ By using Proposition \ref{tensor-curv}, we have
\[
\begin{split}
\overline{R}(N,\bar{e}_i)N &= K_{\tan}(t)(\bar{e}_i - \langle\bar{e}_i,N\rangle N) - (K_{\tan}(t)-K_{\rad}(t))\langle \bar{e}_i-\langle\bar{e}_i,N\rangle N,\partial_t\rangle\partial_t\\
&\qquad - (K_{\tan}(t) - K_{\rad}(t))\nu(\nu\bar{e}_i - \langle \bar{e}_i,\partial_t\rangle N).
\end{split}
\]
This implies
\[
\begin{split}
\ric_M(N,N)&=\sum_{i=1}^3\langle\overline{R}(N,\bar{e}_i)N,\bar{e}_i\rangle\\
& = 2K_{\tan}(t) - (K_{\tan}(t) - K_{\rad}(t))(1-\nu^2) - 2(K_{\tan}(t) - K_{\rad}(t))\nu^2\\
&=2K_{\tan}(t) - (K_{\tan}(t) - K_{\rad}(t))(1+\nu^2)\\
&=2K_{\tan}(t) + (K_{\rad}(t) - K_{\tan}(t))(1+\nu^2),\\
\end{split}
\]
which gives Equation (\ref{normal-ricci}). Rearranging the terms, we have
\[
\begin{split}
\ric_M(N,N) &= 2K_{\tan}(t) + (K_{\rad}(t) - K_{\tan}(t))(1+\nu^2)\\
            &= K_{\tan}(t) + K_{\rad}(t) + (K_{\rad}(t) - K_{\tan}(t))\nu^2\\
            &= 2K_{\rad}(t) + (K_{\tan}(t) - K_{\rad}(t)) - (K_{\tan}(t) - K_{\rad}(t))\nu^2\\
            &= 2K_{\rad}(t) + (K_{\tan}(t) - K_{\rad}(t))(1-\nu^2)
\end{split}
\]
which gives Equation (\ref{normal-ricci-2}).
 
\end{proof}
\begin{remark}
{\normalfont 
We can also obtain Equations (\ref{normal-ricci}) and (\ref{normal-ricci-2}) from the expression for the Ricci curvature found in \cite{Brendle}, pp. 248-249., by taking there $n=3,$ $N=\s^2,$ and $\rho=1.$
}
\end{remark}

We will also need the following result, whose proof can be found in \cite{souam}, p. 2850, Proposition 3.1. 

\begin{proposition}
\label{souam-0}
Let $M$ be a simply connected conformally flat Riemannian three-dimensional manifold and $\Sigma$ be a compact, orientable surface, without boundary, immersed in $M.$ Denote by $K_s$ the sectional curvature of $M$ evaluated on the tangent plane to $\Sigma,$ by $H$ the mean curvature of $\Sigma$ and by $d\Sigma$ its area element. Then
\begin{equation}\label{souam-1}
\int_\Sigma \left(H^2 + K_s \right)d\Sigma\geq 4\pi
\end{equation}
and the equality holds if, and only if, $\Sigma$ is a totally umbilic sphere.
Furthermore, if $\Sigma$ is not embedded, then
\begin{equation}\label{souam-2}
\int_\Sigma \left(H^2 + K_s\right)d\Sigma \geq 8\pi.
\end{equation}
\end{proposition}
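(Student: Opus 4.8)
The plan is to exploit the conformal invariance of the functional $\int_\Sigma (H^2+K_s)\,d\Sigma$ and transport the inequality to the round sphere $\s^3,$ where it becomes the classical Willmore and Li--Yau inequalities. First I would record the pointwise consequence of the Gauss equation: writing $\kappa_1,\kappa_2$ for the principal curvatures of $\Sigma,$ so that $H=\tfrac12(\kappa_1+\kappa_2)$ and the intrinsic Gauss curvature equals $K_\Sigma=K_s+\kappa_1\kappa_2,$ one has
\[
H^2+K_s=\tfrac14(\kappa_1-\kappa_2)^2+K_\Sigma.
\]
Integrating over $\Sigma$ and invoking Gauss--Bonnet gives
\[
\int_\Sigma (H^2+K_s)\,d\Sigma=\tfrac14\int_\Sigma(\kappa_1-\kappa_2)^2\,d\Sigma+2\pi\chi(\Sigma),
\]
where the first term on the right is the conformally invariant Willmore energy and the second is topological. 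When $\Sigma$ is a sphere this identity alone proves the inequality, since $2\pi\chi(\Sigma)=4\pi$ and the remaining integrand is nonnegative; but for genus at least one $\chi(\Sigma)\le0,$ so a genuinely global argument is required.

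Next I would verify that the whole functional is invariant under a conformal change $\tilde g=e^{2\phi}g$ of the ambient metric. Under such a change $\tilde\kappa_1-\tilde\kappa_2=e^{-\phi}(\kappa_1-\kappa_2)$ and $d\tilde\Sigma=e^{2\phi}\,d\Sigma,$ so $(\kappa_1-\kappa_2)^2\,d\Sigma$ is pointwise conformally invariant; combined with the topological invariance of $2\pi\chi(\Sigma),$ this shows that $\int_\Sigma(H^2+K_s)\,d\Sigma$ depends only on the conformal class of $g.$

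I would then use the hypotheses on $M.$ Since $M$ is simply connected and conformally flat, the developing-map theorem for conformally flat structures (Kuiper) furnishes a conformal local diffeomorphism $\Phi\colon M\to\s^3;$ equivalently $\Phi^*g_{\s^3}=e^{2\psi}g,$ so $(M,\Phi^*g_{\s^3})$ is locally isometric to $\s^3$ and the ambient sectional curvature on every tangent plane to $\Sigma$ equals $1.$ By the conformal invariance just established,
\[
\int_\Sigma (H^2+K_s)_g\,d\Sigma_g=\int_\Sigma(\hat H^2+1)\,d\hat\Sigma,
\]
the right-hand side being computed for the immersed surface $\Phi\colon\Sigma\to\s^3$ in the round metric.

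It then remains to apply the Willmore inequality in $\s^3,$ namely $\int(\hat H^2+1)\,d\hat\Sigma\geq4\pi$ with equality only for totally umbilic geodesic spheres, together with the Li--Yau multiplicity bound $\int(\hat H^2+1)\,d\hat\Sigma\geq8\pi$ whenever the immersion has a double point. Because $\Phi$ is a local diffeomorphism, a self-intersection of $\Sigma$ in $M$ produces one of $\Phi(\Sigma)$ in $\s^3,$ which gives the $8\pi$ bound in the non-embedded case; and since total umbilicity is conformally invariant, the equality case $4\pi$ pulls back to show that $\Sigma$ is a totally umbilic sphere. I expect the reduction to be the main obstacle: the developing map need not be injective even when $\Sigma$ is embedded, so $\Phi(\Sigma)$ is only an immersed surface and one genuinely needs the Willmore and Li--Yau inequalities in their immersed, multiplicity-counting form; one must also check that the conformal invariance is exact and that $K_s$ transforms correctly under $\Phi,$ after which the constant-curvature inequalities close the argument.
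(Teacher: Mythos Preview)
The paper does not supply its own proof of this proposition; it simply cites Souam \cite{souam}, Proposition 3.1, so there is no in-text argument to compare against. Your outline is correct and is in fact the standard route (and the one Souam follows): rewrite $H^2+K_s=\tfrac14(\kappa_1-\kappa_2)^2+K_\Sigma$ via the Gauss equation, observe the conformal invariance of the resulting functional, use Kuiper's theorem to develop the simply connected conformally flat $M$ into $\s^3$, and then invoke the Willmore inequality and the Li--Yau multiplicity estimate for immersed surfaces in $\s^3$. Your caveats about the developing map possibly failing to be injective are well placed but, as you note, do not affect either inequality since Willmore and Li--Yau are stated for immersions; the equality characterization transfers back because the traceless second fundamental form (hence umbilicity) is conformally invariant.
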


The proof of the next lemma is essentially in \cite{ros-1}. See also \cite{ros-2} and \cite{souam}. We give a proof here for the sake of completeness.
\begin{lemma}
Let $\Sigma$ be a compact, without boundary, stable, constant mean curvature $H\neq0$ surface of a simply connected conformally flat Riemannian three-dimensional manifold $M.$ Let $g=\genus(\Sigma).$ If $g=2k$ or $g=2k+1,$ then
\begin{equation}\label{ric-genus-1}
\int_\Sigma \left(2H^2 + \ric_M(N,N)\right) d\Sigma \leq 8\pi(1-k)
\end{equation}
or, if $\Sigma$ is not embedded,
\begin{equation}\label{ric-genus-2} 
\int_\Sigma \left(2H^2 + \ric_M(N,N)\right) d\Sigma \leq -8\pi k.
\end{equation}
\end{lemma}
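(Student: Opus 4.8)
The plan is to combine the stability inequality \eqref{jacobi} with a suitable choice of test functions and with the Gauss--Bonnet/Hersch-type estimate of Proposition \ref{souam-0}. Since $M$ is simply connected and conformally flat, the Gauss equation gives $K_\Sigma = K_s + \det A$, where $K_\Sigma$ is the intrinsic Gauss curvature of $\Sigma$ and $K_s$ is the sectional curvature of $M$ on the tangent plane to $\Sigma$; here I would use $\det A = 2H^2 - \tfrac12\|A\|^2$ (valid for surfaces, since $\|A\|^2 = \kappa_1^2+\kappa_2^2$ and $2H = \kappa_1+\kappa_2$). Integrating and applying Gauss--Bonnet, $\int_\Sigma K_\Sigma\, d\Sigma = 2\pi\chi(\Sigma) = 4\pi(1-g)$, so
\[
\int_\Sigma \left(K_s + 2H^2 - \tfrac12\|A\|^2\right)d\Sigma = 4\pi(1-g).
\]

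First I would obtain test functions from a conformal map of $\Sigma$ to $\s^2$: by the uniformization theorem together with a theorem of Hersch (for $g=0$) and its generalizations (or the Li--Yau conformal volume argument as in \cite{ros-1}), there is a conformal map $\phi:\Sigma\to\s^2\subset\R^3$ whose coordinate functions $\phi_1,\phi_2,\phi_3$ can, after composing with a conformal automorphism of $\s^2$, be arranged to satisfy $\int_\Sigma \phi_i\, d\Sigma = 0$ for each $i$. These are then admissible in \eqref{jacobi}. The conformality of $\phi$ gives $\sum_i |\n_\Sigma \phi_i|^2 = 2\,\mathcal{E}(\phi)$ pointwise, where the energy density equals (twice) the Jacobian, so that $\sum_i \int_\Sigma |\n_\Sigma \phi_i|^2\, d\Sigma = 2\,\mathrm{Area}(\phi) \ge 8\pi$ if $\phi$ has degree one — more precisely this is exactly where the quantity $\int_\Sigma (H^2+K_s)\,d\Sigma$ enters through Proposition \ref{souam-0}, because the relevant area bound is $\ge 4\pi$, with $\ge 8\pi$ when $\Sigma$ is not embedded. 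Summing the stability inequality over $i$ and using $\sum_i \phi_i^2 = 1$:
\[
0 \le \sum_{i=1}^3 J''(0)(\phi_i) = \int_\Sigma \Big(\sum_i |\n_\Sigma \phi_i|^2\Big)d\Sigma - \int_\Sigma \big(\ric_M(N,N) + \|A\|^2\big)\,d\Sigma.
\]

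Next I would feed in the area bound for $\sum_i \int |\n_\Sigma\phi_i|^2$ and then eliminate $\|A\|^2$ using the Gauss--Bonnet identity above. Writing $g = 2k$ or $g=2k+1$ controls $1-g \le 1-k$ in the embedded case and, when $\Sigma$ is not embedded, the degree of $\phi$ is at least $2$ (or one invokes \eqref{souam-2}), improving the area bound to $8\pi$ and the Gauss--Bonnet side so as to produce $-8\pi k$. Concretely: from stability, $\int_\Sigma \|A\|^2 \le \int_\Sigma \sum_i|\n\phi_i|^2 - \int_\Sigma \ric_M(N,N)$, and substituting this into $\int_\Sigma(K_s + 2H^2 - \tfrac12\|A\|^2) = 4\pi(1-g)$ together with the area estimate gives, after rearrangement, the claimed bound $\int_\Sigma(2H^2 + \ric_M(N,N))\,d\Sigma \le 8\pi(1-k)$, respectively $\le -8\pi k$.

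The main obstacle I expect is the construction and balancing of the conformal test functions for surfaces of arbitrary genus: Hersch's lemma applies directly only to genus zero, and for higher genus one needs the Li--Yau / Yang--Yau conformal volume machinery to produce a branched conformal cover of $\s^2$ of controlled degree whose coordinate functions can be simultaneously balanced against $d\Sigma$, and to track how the degree forces the factor involving $k$ (this is precisely the genus-halving that produces $1-k$ rather than $1-g$). The rest — the Gauss equation bookkeeping and the algebraic substitution — is routine once the test functions and the area inequality \eqref{souam-1}--\eqref{souam-2} are in hand.
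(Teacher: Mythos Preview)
Your overall strategy---balanced conformal test functions to $\s^2$, summed stability inequality, Gauss--Bonnet, and Proposition~\ref{souam-0}---is exactly the paper's. But two ingredients are conflated and one is missing. The Dirichlet energy of the map $\phi$ satisfies $\sum_i\int_\Sigma|\n_\Sigma\phi_i|^2\,d\Sigma=8\pi\,\degree(\phi)$; this has nothing to do with Proposition~\ref{souam-0}, which is a separate lower bound $\int_\Sigma(H^2+K_s)\,d\Sigma\ge 4\pi$ (or $8\pi$ if $\Sigma$ is not embedded) applied \emph{after} the stability step. The stability inequality together with $\|A\|^2=4H^2+2K_s-2K$ and Gauss--Bonnet gives
\[
\int_\Sigma\bigl(4H^2+2K_s+\ric_M(N,N)\bigr)\,d\Sigma\le 8\pi\,\degree(\phi)+8\pi(1-g),
\]
and only then do you subtract $2\int_\Sigma(H^2+K_s)\,d\Sigma\ge 8\pi$ (resp.\ $16\pi$) from Proposition~\ref{souam-0} to obtain $\int_\Sigma(2H^2+\ric_M(N,N))\,d\Sigma\le 8\pi(\degree(\phi)-g)$ (resp.\ $8\pi(\degree(\phi)-g-1)$). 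The non-embedded improvement comes from Souam's second inequality, not from $\degree(\phi)\ge 2$.

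The genuine gap is the degree bound: your remark ``$1-g\le 1-k$'' does not produce the result, because the estimate above still contains $\degree(\phi)$. What the paper invokes (and what you correctly flag as the obstacle, but do not state) is Meis' theorem: every compact Riemann surface of genus $g$ admits a meromorphic map to $\s^2$ of degree at most $1+\left[\frac{g+1}{2}\right]$. With this bound one checks directly that $\degree(\phi)-g\le 1+\left[\frac{g+1}{2}\right]-g=1-k$ for $g=2k$ or $g=2k+1$, which is precisely the ``genus-halving'' you anticipate. The Li--Yau/Yang--Yau machinery you mention is related but gives a coarser constant; the sharp input here is the gonality bound from Brill--Noether/Meis.
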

\begin{proof}
By a result of T. Meis for $g\geq 2$ (see \cite{meis}, p.51, for the original proof, also the remark in the page 152 of \cite{Forster} for the mention of $g\geq2,$ and \cite{G-H}, p.261), and by using Theorem 10-21, p. 275 of \cite{Sp}, for smaller genus, there exists a meromorphic map $\phi:\Sigma\rightarrow\s^2\subset\R^3$ such that
\begin{equation}\label{deg}
\degree (\phi) \leq 1 + \left[\dfrac{g+1}{2}\right],
\end{equation}
where $[x]$ is the greatest integer less or equal to $x.$ Composing $\phi$ with a conformal diffeomorphism of $\s^2$ we can suppose that
\[
\int_\Sigma \phi d\Sigma =0.
\]
This implies, by using the second equation of (\ref{jacobi}), p. \pageref{jacobi}, and the Gauss-Bonnet theorem,
\[
\begin{split}
0&\leq \int_\Sigma\left[ |\n\phi|^2 - (\ric_M(N,N) + \|A\|^2)\right] d\Sigma\\
 &= \int_\Sigma\left[ |\n\phi|^2 - (4H^2 + 2K_s + \ric_M(N,N) - 2K)\right] d\Sigma\\
 &= 8\pi \degree(\phi) - \int_{\Sigma}(4H^2 + 2K_s + \ric_M(N,N))d\Sigma + 8\pi(1-g).
\end{split}
\]
since $\|\phi\|^2 =1.$ Using estimate (\ref{deg}), we obtain
\begin{equation}\label{ric-0}
\int_\Sigma \left(4H^2 + 2K_s + \ric_M(N,N)\right)d\Sigma \leq 8\pi\left(2-g + \left[\dfrac{g+1}{2}\right]\right).
\end{equation}
This implies, by using (\ref{souam-1}),
\begin{equation}\label{ric-1}
\int_\Sigma \left(2H^2 + \ric_M(N,N)\right)d\Sigma \leq 8\pi\left(1-g + \left[\dfrac{g+1}{2}\right]\right),
\end{equation}
or if $\Sigma$ is not embedded, by using (\ref{souam-2}),
\begin{equation}\label{ric-2}
\int_\Sigma \left(2H^2 + \ric_M(N,N)\right)d\Sigma \leq 8\pi\left(-g + \left[\dfrac{g+1}{2}\right]\right).
\end{equation}
The result then follows.
 
\end{proof}

As we discussed in the introduction, the slices are the natural candidates to be the compact, without boundary, stable, constant mean curvature $H\neq0$ surfaces in the warped product manifolds. However, in the next proposition we prove that the slices are stable if, and only if, $K_{\tan}(t)\geq K_{\rad}(t).$

\begin{proposition}\label{slice}
The slice $\Sigma=\{t\}\times \s^2$ is stable if, and only if, $K_{\tan}(t)\geq K_{\rad}(t).$
\end{proposition}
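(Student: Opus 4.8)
The plan is to compute the second variation $J''(0)(f)$ directly on the slice $\Sigma=\{t\}\times\s^2$ and determine exactly when it is nonnegative for all $f$ with $\int_\Sigma f\,d\Sigma=0$. On the slice the geometry is explicit: the unit normal is $N=\de_t$, so $\nu=\langle N,\de_t\rangle=1$; the second fundamental form is a multiple of the metric, $A=\frac{h'(t)}{h(t)}\,\mathrm{Id}$, hence $\|A\|^2=2\left(\frac{h'(t)}{h(t)}\right)^2$ and $H=\frac{h'(t)}{h(t)}$ is constant on $\Sigma$; and the induced metric is $h(t)^2 d\om^2$, so $\Delta_\Sigma=\frac{1}{h(t)^2}\Delta_{\s^2}$. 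From Corollary \ref{ricci-N} with $\nu=1$ (equation (\ref{normal-ricci-2})), we get $\ric_M(N,N)=2K_{\rad}(t)$. Plugging these into the first line of (\ref{jacobi}), the potential in the Jacobi operator is the constant $q:=\ric_M(N,N)+\|A\|^2 = 2K_{\rad}(t)+2\left(\frac{h'(t)}{h(t)}\right)^2$, and since $K_{\tan}(t)=\frac{1-h'(t)^2}{h(t)^2}$ we can rewrite $\left(\frac{h'}{h}\right)^2=\frac{1}{h^2}-K_{\tan}(t)$, so that $q=2K_{\rad}(t)-2K_{\tan}(t)+\frac{2}{h^2}$.

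Next I would diagonalize the quadratic form. Write $f$ in terms of spherical harmonics on $\s^2$ pulled back to $\Sigma$; since $\int_\Sigma f\,d\Sigma=0$ means $f$ has no constant component, the admissible $f$ are spanned by eigenfunctions of $\Delta_{\s^2}$ with eigenvalues $\lambda_\ell=\ell(\ell+1)$, $\ell\ge 1$. For such $f$ one has $-\Delta_\Sigma f=\frac{\lambda_\ell}{h^2}f$, and
\[
J''(0)(f)=\int_\Sigma\left[\frac{\lambda_\ell}{h^2}-q\right]f^2\,d\Sigma
=\int_\Sigma\left[\frac{\lambda_\ell-2}{h^2}-2K_{\rad}(t)+2K_{\tan}(t)\right]f^2\,d\Sigma .
\]
The worst case is the smallest admissible eigenvalue $\lambda_1=2$, for which the bracket reduces to $2(K_{\tan}(t)-K_{\rad}(t))$. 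Thus $J''(0)(f)\ge 0$ for all volume-preserving $f$ if and only if $K_{\tan}(t)\ge K_{\rad}(t)$: if this holds, every $\lambda_\ell\ge 2$ gives a nonnegative bracket, so stability follows; if $K_{\tan}(t)<K_{\rad}(t)$, then taking $f$ a first-order spherical harmonic (which automatically integrates to zero) makes the bracket the negative constant $2(K_{\tan}(t)-K_{\rad}(t))$, so $J''(0)(f)<0$ and $\Sigma$ is unstable.

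I do not expect a serious obstacle here; the computation is routine once the slice geometry is assembled. The one point requiring a little care is the bookkeeping of curvature identities — making sure the substitution $\left(\frac{h'}{h}\right)^2=\frac{1}{h^2}-K_{\tan}$ and the value $\ric_M(N,N)=2K_{\rad}$ are used consistently so that the potential collapses to exactly $\frac{\lambda_\ell-2}{h^2}+2(K_{\tan}-K_{\rad})$ on the $\ell$-th eigenspace. The other small point is to recall that the first nonzero eigenvalue of $\Delta_{\s^2}$ is $2$ with eigenfunctions the restrictions of linear coordinate functions of $\R^3$, which are precisely the ones that are $L^2$-orthogonal to constants, so the constraint $\int_\Sigma f\,d\Sigma=0$ is exactly what removes the $\ell=0$ mode and leaves $\lambda_1=2$ as the governing eigenvalue.
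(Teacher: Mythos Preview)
Your proposal is correct and follows essentially the same route as the paper: compute the constant potential $\ric_M(N,N)+\|A\|^2$ on the slice and reduce the stability inequality to the first nonzero Laplace eigenvalue on $\s^2$, which yields exactly the quantity $2(K_{\tan}(t)-K_{\rad}(t))$. The only cosmetic difference is that the paper packages the eigenvalue step via the Rayleigh characterization of $\lambda_1^\Delta=2(H^2+K_{\tan}(t))=2/h(t)^2$, whereas you expand explicitly in spherical harmonics; both arguments are equivalent.
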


\begin{proof}
Let $f:\Sigma\rightarrow\R$ be an smooth function satisfying $\displaystyle{\int_\Sigma f d\Sigma =0}$. The first eigenvalue of the Laplacian in the slice $\Sigma^2=\{t\}\times \s^2$ is
\[
\lambda_1^\Delta = 2(H^2 + K_{\tan}(t)),
\]
where $H = \dfrac{h'(t)}{h(t)}$ is constant in the slice. Applying the data above in (\ref{jacobi}), using that the slice is umbilical, i.e., $\|A\|^2=2H^2,$ $\nu=\langle N,\partial_t\rangle=-1$ in the slices, and the Rayleigh characterization of $\lambda_1^\Delta$, we have
\[
\begin{split}
J''(0)f &= -\int_\Sigma f\Delta_\Sigma f d\Sigma - \int_{\Sigma} (\ric_M(N,N) + \|A\|^2)f^2d\Sigma\\
        &\geq \lambda_1^\Delta\int_\Sigma f^2 d\Sigma - 2K_{\rad}(t)\int_\Sigma f^2 d\Sigma - 2H^2\int_\Sigma f^2 d\Sigma\\
        &= (\lambda_1^\Delta - 2H^2 - 2K_{\rad}(t))\int_\Sigma f^2 d\Sigma \\
        &= 2(K_{\tan}(t) - K_{\rad}(t))\int_\Sigma f^2 d\Sigma.\\
\end{split}
\]
Thus, if $K_{\tan}(t) \geq K_{\rad}(t),$ then $J''(0)f\geq0$ and $\Sigma$ is stable. Conversely, if $K_{\tan}(t) < K_{\rad}(t),$ by taking $f$ as the first eigenfunction, we have
\[
J''(0)f = 2(K_{\tan}(t) - K_{\rad}(t))\int_\Sigma f^2 d\Sigma < 0.
\]
Therefore $\Sigma$ is unstable.
 
\end{proof}
\begin{remark}
{\normalfont
Proposition \ref{slice} holds for every dimension with the same proof, just adapting the dimension.
}
\end{remark}

Now we state the main step in the proof of Theorems \ref{Stab-SS} and \ref{Stab-RN}:

\begin{theorem}\label{Stab-Main}
Let $M^3=I\times\s^2$ be a warped product manifold with metric $\langle\cdot,\cdot\rangle=dt^2 + h(t)^2d\om^2,$ where $h:I\rightarrow\R$ is the smooth warping function. Let $\Sigma$ be a compact, without boundary, stable, constant mean curvature $H\neq0$ surface of $M^3.$ If one of the following conditions is satisfied
\begin{itemize}
\item[(i)] $K_{\tan}(t)\geq K_{\rad}(t)$ and
\[
H^2\geq\sup_\Sigma \{-K_{\rad}(t)\};
\]
\item[(ii)] $K_{\rad}(t)\geq K_{\tan}(t),$ and
\[
H^2\geq\sup_\Sigma\{-K_{\tan}(t)\};
\]
\end{itemize}
then $\genus(\Sigma)\leq 1$ and $\Sigma$ is embedded.
\end{theorem}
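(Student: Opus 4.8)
The plan is to combine the stability inequality of the previous lemma with the Ricci formula of Corollary~\ref{ricci-N}, eliminating the angle function $\nu=\langle N,\partial_t\rangle$ by the sign hypotheses. First I would assume, for contradiction, that $\genus(\Sigma)=g\geq 2$, writing $g=2k$ or $g=2k+1$ with $k\geq 1$, and feed this into inequality~(\ref{ric-genus-1}), so that
\[
\int_\Sigma\left(2H^2+\ric_M(N,N)\right)d\Sigma\leq 8\pi(1-k)\leq 0.
\]
(Recall that a warped product $I\times\s^2$ is simply connected and conformally flat, so the previous lemma applies.) The whole game is then to show that the integrand on the left is strictly positive under either hypothesis (i) or (ii), which forces $k\le 0$, hence $g\le 1$; the embeddedness statement comes from the same computation together with~(\ref{souam-2}) in the lemma, since a non-embedded $\Sigma$ would give the sharper bound $-8\pi k\le -8\pi<0$ and again contradict positivity of the integrand.

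For the pointwise positivity, under hypothesis (i) I would use form~(\ref{normal-ricci-2}) of Corollary~\ref{ricci-N},
\[
\ric_M(N,N)=2K_{\rad}(t)+(K_{\tan}(t)-K_{\rad}(t))(1-\nu^2).
\]
Since $K_{\tan}(t)\geq K_{\rad}(t)$ and $1-\nu^2\geq 0$, the last term is nonnegative, so $\ric_M(N,N)\geq 2K_{\rad}(t)$, and therefore
\[
2H^2+\ric_M(N,N)\geq 2H^2+2K_{\rad}(t)\geq 2\big(H^2-\sup_\Sigma\{-K_{\rad}(t)\}\big)\geq 0,
\]
with room to spare as long as the inequality $H^2\ge\sup_\Sigma\{-K_{\rad}(t)\}$ is used carefully against the strict $<0$ on the right of the genus estimate. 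Under hypothesis (ii) I would instead use form~(\ref{normal-ricci}),
\[
\ric_M(N,N)=2K_{\tan}(t)+(K_{\rad}(t)-K_{\tan}(t))(1+\nu^2),
\]
and bound $1+\nu^2\le 2$ using $\nu^2\le 1$; since now $K_{\rad}(t)-K_{\tan}(t)\geq 0$ this yields $\ric_M(N,N)\geq 2K_{\tan}(t)$, whence $2H^2+\ric_M(N,N)\ge 2(H^2+K_{\tan}(t))\ge 0$ by the hypothesis $H^2\geq\sup_\Sigma\{-K_{\tan}(t)\}$.

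The delicate point is the borderline case where the chain of inequalities degenerates to equality: if $g$ were exactly $2$ (so $k=1$) and the integrand vanished identically, one would need to extract a contradiction from the equality cases — $\nu^2\equiv 1$ or $\nu^2\equiv 0$ on $\Sigma$, $H^2\equiv -K_{\rad}(t)$ (resp. $-K_{\tan}(t)$), and equality in~(\ref{souam-1}) forcing $\Sigma$ to be a totally umbilic sphere, which already has genus $0$, a contradiction. So I expect the main obstacle to be not the computation but the careful bookkeeping of when each inequality is strict: one must argue that either the genus bound $8\pi(1-k)$ is strictly negative (handled when $k\ge 2$, i.e. $g\ge 4$) or, for $g\in\{2,3\}$, that equality throughout is incompatible with $\Sigma$ being a nonzero-CMC surface of positive genus, invoking the equality case of Proposition~\ref{souam-0}. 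Once $\genus(\Sigma)\le 1$ is established, embeddedness follows because~(\ref{ric-genus-2}) would otherwise give $\int_\Sigma(2H^2+\ric_M(N,N))\,d\Sigma\le -8\pi k\le 0$ with $k\ge 0$; combined with the strict positivity of the integrand whenever $H^2$ exceeds the stated supremum, the only way out is $k=0$ and $\Sigma$ embedded, and a separate check disposes of the genus-$1$ non-embedded boundary case via the same equality analysis.
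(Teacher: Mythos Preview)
Your proposal is correct and follows essentially the same route as the paper's proof: bound $2H^2+\ric_M(N,N)\geq 0$ pointwise via Corollary~\ref{ricci-N}, feed this into~(\ref{ric-genus-1}) to force $k\le 1$, then use the equality case of Proposition~\ref{souam-0} to rule out $k=1$, and finally repeat the argument with~(\ref{ric-genus-2}) for embeddedness. One small slip: in case~(ii) you write that you ``bound $1+\nu^2\le 2$'' to obtain $\ric_M(N,N)\geq 2K_{\tan}(t)$, but an upper bound on $1+\nu^2$ multiplied by the nonnegative factor $K_{\rad}-K_{\tan}$ gives an \emph{upper} bound, not a lower one; what you actually need (and what the paper uses) is simply that $(K_{\rad}(t)-K_{\tan}(t))(1+\nu^2)\geq 0$, which is immediate since both factors are nonnegative.
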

\begin{proof}
If $K_{\tan}(t)\geq K_{\rad}(t),$ then the hypothesis $H^2\geq \sup_\Sigma \{-K_{\rad}(t)\}$ of item (i) and Equation (\ref{normal-ricci-2}) of Corollary \ref{ricci-N}, p.\pageref{normal-ricci-2}, imply
\[
\begin{split}
2H^2 + \ric_M(N,N)&=2H^2 + 2K_{\rad}(t) + (K_{\tan}(t)-K_{\rad}(t))(1-\nu^2)\\
                  &\geq 2H^2 + 2K_{\rad}(t)\geq 0.\\                  
\end{split}
\]
Analogously, if $K_{\rad}(t)\geq K_{\tan}(t),$ the hypothesis $H^2\geq\sup_\Sigma\{-K_{\tan}(t)\}$ of item (ii) and Equation (\ref{normal-ricci}) of Corollary \ref{ricci-N}, p.\pageref{normal-ricci}, imply
\[
\begin{split}
2H^2 + \ric_M(N,N)&=2H^2 + 2K_{\tan}(t) + (K_{\rad}(t) - K_{\tan}(t))(1+\nu^2)\\
                  &\geq 2H^2 + 2K_{\tan}(t) \geq 0.\\ 
\end{split}
\]
Thus, by using (\ref{ric-genus-1}) we have, for both items (i) and (ii),
\[
\begin{split}
0&\leq \int_\Sigma \left(2H^2 + \ric_M(N,N)\right)d\Sigma \leq 8\pi(1-k).
\end{split}
\]
This implies that $k=0$ or $k=1.$ If $k=1,$ i.e., $\genus(\Sigma)=2$ or $\genus(\Sigma)=3,$ then
\[
\int_\Sigma \left(2H^2 + \ric_M(N,N)\right)d\Sigma\equiv 0
\]
and all the inequalities become equalities. This implies that inequality (\ref{ric-genus-1}), p. \pageref{ric-genus-1}, becomes an equality. Since in the proof of inequality (\ref{ric-genus-1}) we used the inequality (\ref{souam-1}), of Proposition \ref{souam-0}, p. \ref{souam-1}, we deduce that inequality (\ref{souam-1}) also becomes an equality, i.e.,
\[
\int_\Sigma \left(H^2 + K_s\right) d\Sigma \equiv 4\pi 
\] 
and thus, by Proposition \ref{souam-0}, $\Sigma$ is a totally umbilic sphere. This is a contradiction with the assumption that $\genus(\Sigma)=2$ or $\genus(\Sigma)=3.$ Thus $\genus(\Sigma)\leq 1.$ On the other hand, if $\Sigma$ is not embedded, then by using (\ref{ric-genus-2}),
\[
0\leq \int_\Sigma \left(2H^2 + \ric_M(N,N)\right) d\Sigma \leq 0.
\]
Once again, this proves that $\Sigma$ is an umbilical sphere. This is a contradiction since we are assuming that $\Sigma$ is not embedded. Thus $\Sigma$ is embedded. This proves items (i) and (ii) of Theorem \ref{Stab-Main}. 
 
\end{proof}

\begin{remark}\label{slice-2}
{\normalfont
The slices satisfy the hypothesis of Theorem \ref{Stab-Main} (i) if, and only if,
\[
\dfrac{d}{dt}\left(\dfrac{h'(t)}{h(t)}\right)\leq 0,
\]
i.e., if, and only if, the mean curvature $H(t)=\frac{h'(t)}{h(t)}$ of the slice $\{t\}\times\s^2$ is a non-increasing function of $t.$ In fact, we need to prove that
\[
H(t)^2=\left(\dfrac{h'(t)}{h(t)}\right)^2\geq - K_{\rad}(t) = \dfrac{h''(t)}{h(t)}. 
\]
This is equivalent to
\[
h''(t)h(t) - h'(t)^2\leq 0.
\]
On the other hand
\[
\dfrac{d}{dt}\left(\dfrac{h'(t)}{h(t)}\right) = \dfrac{h''(t)h(t)-h'(t)^2}{h(t)^2}.
\]
The claim then follows.
}
\end{remark}

Now let us prove Theorems \ref{Stab-SS} and \ref{Stab-RN}.

\begin{proof}[Proof of Theorem \ref{Stab-SS}.]
Since
\[
K_{\tan}(t) = \dfrac{1-h'(t)^2}{h(t)^2} = \dfrac{m}{h(t)^3}+c \ \mbox{and} \ K_{\rad}(t) = -\dfrac{h''(t)}{h(t)} = -\dfrac{m}{2h(t)^3}+c
\]
we have $K_{\tan}(t) > K_{\rad}(t).$ Thus applying Theorem \ref{Stab-Main} (i), if $\Sigma\subset [r_0,s_1)\times\s^2\subset M^3,$ and
\[
H^2 \geq \sup_{\Sigma} \dfrac{m}{2h(t)^3}-c = \dfrac{m}{2r_0^3}-c,
\]
then $\genus(\Sigma)\leq 1$ and $\Sigma$ is embedded. From Corollary 1.2, p. 249 of \cite{Brendle}, the only compact, embedded, nonzero constant mean curvature surfaces of the de Sitter-Schwarzschild manifold are the slices.
 
\end{proof}

\begin{proof}[Proof of Theorem \ref{Stab-RN}.]
Since
\[
K_{\tan}(t) = \dfrac{1-h'(t)^2}{h(t)^2} = \dfrac{1}{2h(t)^3}\left(2m - \dfrac{2q^2}{h(t)}\right)
\]
and
\[
K_{\rad}(t) = -\dfrac{h''(t)}{h(t)} = -\dfrac{1}{2h(t)^3}\left(m - \dfrac{2q^2}{h(t)}\right)
\]
we have $K_{\tan}(t) > K_{\rad}(t).$ Thus applying Theorem \ref{Stab-Main} (i), if $\Sigma\subset [r_0,\infty)\times\s^2\subset M^3,$ and
\[
H^2 \geq \sup_{\Sigma} \dfrac{1}{2h(t)^3}\left(m - \dfrac{2q^2}{h(t)}\right) = \dfrac{1}{2r_0^3}\left(m - \dfrac{2q^2}{r_0}\right),
\]
then $\genus(\Sigma)\leq 1$ and $\Sigma$ is embedded. From Corollary 1.3, p. 249 of \cite{Brendle}, the only compact, embedded, nonzero constant mean curvature surfaces of the Reissner-Nordstrom manifold are the slices.
 
\end{proof}

\section{Stability and harmonic vector fields}

Let $\Sigma$ be an orientable Riemannian surface and denote by $H^1(\Sigma,\R)$ the space of  harmonic 1-forms on $\Sigma$. Recall that a 1-form $\omega$ on $\Sigma$ is harmonic if, and only if, it is closed, i.e., $(\n\om)(X,Y)=(\n\om)(Y,X)$ for all $X,Y\in T\Sigma,$ and co-closed, i.e., $(\n\om)(e_1,e_1)+(\n\om)(e_2,e_2)=0,$ where $\{e_1,e_2\}$ is an orthonormal frame of $T\Sigma.$ The following result will be useful and its proof can be found in \cite{petersen}, pp. 204-206:

\begin{lemma}\label{harm-field}
Let $\Sigma$ be a compact Riemannian surface, $\omega$ be a 1-form on $\Sigma$ and $X:\Sigma\rightarrow T\Sigma$ be its dual vector field, i.e., $\om(U)=\langle X,U\rangle,$ for all $U\in T\Sigma.$ Then $\omega$ is harmonic if, and only if,
\[
\di X =0 \ \mbox{and} \ \langle\n_Z X,Y \rangle = \langle\n_Y X,Z \rangle, \ \mbox{for all}\ Y,Z\in T\Sigma.
\]
In this case we call $X$ a harmonic vector field.
\end{lemma}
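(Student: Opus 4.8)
The plan is to translate the two conditions defining a harmonic $1$-form — symmetry of $\n\om$ and vanishing of its trace — directly into the stated conditions on the dual field $X$, by means of a single pointwise identity. First I would observe that, because the Levi-Civita connection is compatible with the metric, for all $Y,Z\in T\Sigma$ one has
\[
(\n_Z\om)(Y) = Z(\om(Y)) - \om(\n_Z Y) = Z\langle X,Y\rangle - \langle X,\n_Z Y\rangle = \langle\n_Z X,Y\rangle,
\]
the last equality being metric compatibility. Hence, under the identification of $\om$ with $X$, the $(0,2)$-tensor $\n\om$ is precisely the bilinear form $(Z,Y)\mapsto\langle\n_Z X,Y\rangle$ (with the convention $(\n\om)(Z,Y)=(\n_Z\om)(Y)$ used in the text).

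With this identity in hand the rest is immediate. The closedness condition $(\n\om)(Z,Y)=(\n\om)(Y,Z)$ becomes $\langle\n_Z X,Y\rangle=\langle\n_Y X,Z\rangle$ for all $Y,Z\in T\Sigma$. For co-closedness, fix a local orthonormal frame $\{e_1,e_2\}$ of $T\Sigma$; then
\[
(\n\om)(e_1,e_1)+(\n\om)(e_2,e_2)=\langle\n_{e_1}X,e_1\rangle+\langle\n_{e_2}X,e_2\rangle=\di X,
\]
so $\om$ is co-closed if and only if $\di X=0$. Since each of these steps is an equivalence, we obtain both implications of the lemma simultaneously.

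There is no genuine obstacle here: the proof is a short computation resting entirely on the displayed identity. The only point needing care is to keep the sign and index conventions for $\n\om$, for the co-closedness condition, and for $\di$ consistent with those used elsewhere in the paper — in particular to note that $\sum_i\langle\n_{e_i}X,e_i\rangle$ is independent of the chosen orthonormal frame and coincides with the divergence $\di X$ as it appears in Section 3. Note also that compactness of $\Sigma$ plays no role in this pointwise statement; it enters only in the surrounding Hodge-theoretic facts about $H^1(\Sigma,\R)$.
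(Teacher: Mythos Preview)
Your argument is correct: the identity $(\n_Z\om)(Y)=\langle\n_Z X,Y\rangle$ follows from metric compatibility, and from it the two defining conditions for harmonicity translate line-by-line into $\di X=0$ and the symmetry $\langle\n_Z X,Y\rangle=\langle\n_Y X,Z\rangle$. Note, however, that the paper does not supply its own proof of this lemma; it simply refers the reader to \cite{petersen}, pp.~204--206. Your self-contained computation is exactly the standard one and is entirely adequate here; your closing remarks about compactness being irrelevant to the pointwise statement and about keeping the conventions for $(\n\om)(\cdot,\cdot)$ and $\di$ aligned with the surrounding text are well taken.
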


Let $X$ be a harmonic vector field on $\Sigma$ and $f:\Sigma\rightarrow\R$ be a smooth function. Since
\[
\di (fX) = f\di X + \langle X,\n f\rangle = \langle X,\n f\rangle,
\]
if $\Sigma$ is compact, without boundary, then by using divergence theorem we have
\begin{equation}\label{div-0}
\int_\Sigma \langle X,\n f\rangle d\Sigma =0.
\end{equation}
Thus, defining $u:\Sigma\rightarrow\R$ by $u=\langle X,\n f\rangle$ we have that $u$ is a mean zero function.

Since $\Sigma$ has dimension $2,$ we can consider the complex structure $J$ on $\Sigma$ which satisfies $J^2=-Id,$ where $Id$ is the identity map of $T\Sigma$ and
\[
\langle JY,Z\rangle = -\langle Y,JZ\rangle.
\]
The following lemma is well known and we give a proof here for the sake of completeness. It gives us another harmonic vector field:
\begin{lemma}
Let $\Sigma$ be a Riemannian surface and $X$ be a harmonic vector field. Then $JX$ is also a harmonic vector field, where $J$ is the complex structure of $\Sigma.$
\end{lemma}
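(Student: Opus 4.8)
The plan is to check, directly from Lemma \ref{harm-field}, the two conditions characterizing a harmonic vector field, applied to $JX$: namely $\di(JX)=0$ and $\langle\n_Z(JX),Y\rangle=\langle\n_Y(JX),Z\rangle$ for all $Y,Z\in T\Sigma$. The argument rests entirely on one structural fact — that the complex structure of a Riemannian surface is parallel, $\n J=0$ — which I would establish first. Since $J$ is skew-adjoint, $\langle JY,Z\rangle=-\langle Y,JZ\rangle$, and satisfies $J^2=-Id$, differentiating these identities shows that $\n_W J$ is again skew-adjoint and anticommutes with $J$; on a two-dimensional inner product space the only skew-adjoint endomorphism anticommuting with $J$ is the zero map, so $\n_W J=0$. (Concretely, in a local positively oriented orthonormal frame $\{e_1,e_2\}$ with $Je_1=e_2$, $Je_2=-e_1$, one has $\n_W e_1=\theta(W)e_2$ and $\n_W e_2=-\theta(W)e_1$ for the connection $1$-form $\theta$, and then $\n_W(Je_i)=J(\n_W e_i)$ by direct substitution.) Consequently $\n_W(JX)=J(\n_W X)$ for every $W\in T\Sigma$.

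Next I would encode the hypothesis algebraically. Let $T$ be the endomorphism of $T\Sigma$ given by $T(W)=\n_W X$. By Lemma \ref{harm-field}, the harmonicity of $X$ says exactly that $T$ is self-adjoint, $\langle TZ,Y\rangle=\langle TY,Z\rangle$, and trace-free, $\tr T=\di X=0$. In dimension two a self-adjoint, trace-free endomorphism anticommutes with $J$, that is $TJ=-JT$: in any orthonormal frame such a $T$ has matrix $\bigl(\begin{smallmatrix}a&b\\ b&-a\end{smallmatrix}\bigr)$, and one computes $TJ+JT=(\tr T)J=0$. Now, using $\n J=0$, the endomorphism $W\mapsto\n_W(JX)=J(\n_W X)$ is just $JT$, and I claim $JT$ is again self-adjoint and trace-free:
\[
\langle JTZ,Y\rangle=-\langle TZ,JY\rangle=-\langle Z,TJY\rangle=\langle Z,JTY\rangle,\qquad \tr(JT)=\tr(TJ)=-\tr(JT).
\]
The first chain uses skew-adjointness of $J$, self-adjointness of $T$, and $TJ=-JT$; it says precisely $\langle\n_Z(JX),Y\rangle=\langle\n_Y(JX),Z\rangle$. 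The second uses cyclicity of the trace together with $TJ=-JT$, and gives $\di(JX)=\tr(JT)=0$. By Lemma \ref{harm-field}, $JX$ is harmonic, which is the assertion.

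The whole proof is formal once $\n J=0$ is in hand, so the only step that deserves to be written out with care is that fact, together with the elementary linear-algebra remark that a self-adjoint trace-free operator on a $2$-plane anticommutes with $J$. As an alternative route, one may avoid the parallel-$J$ computation by passing to the dual $1$-form $\omega=X^{\flat}$: the $1$-form metrically dual to $JX$ is $\pm{\ast}\omega$, and on a surface ${\ast}{\ast}=-Id$ on $1$-forms, so $\omega$ is closed and co-closed if and only if ${\ast}\omega$ is, i.e. ${\ast}\omega$ is harmonic precisely when $\omega$ is. I would nonetheless keep the direct computation above, since it keeps the proof self-contained with respect to Lemma \ref{harm-field}.
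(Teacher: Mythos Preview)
Your proof is correct. Both you and the paper verify the two conditions of Lemma~\ref{harm-field} directly, but the packaging is different. The paper works pointwise in a geodesic orthonormal frame $\{e_1,e_2\}$ with $Je_1=e_2$, $Je_2=-e_1$, and checks $\di(JX)=0$ and $\langle\n_{e_1}JX,e_2\rangle=\langle\n_{e_2}JX,e_1\rangle$ by bare-hands component manipulation, implicitly using that at the geodesic point one can move $J$ past the derivative. You instead isolate the structural reason---$\n J=0$---and then argue purely linear-algebraically: harmonicity of $X$ says $T=\n X$ is self-adjoint and trace-free, hence anticommutes with $J$, and from this $JT$ is again self-adjoint and trace-free. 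Your version is cleaner and explains \emph{why} the computation works (the anticommutation $TJ=-JT$ is the heart of it), while the paper's is shorter and avoids proving $\n J=0$. The Hodge-star alternative you mention at the end is yet another route, not used in the paper.
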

\begin{proof}
Let $\{e_1,e_2\}$ be an orthonormal frame in $\Sigma$ which is geodesic at $p\in\Sigma.$ Since $J$ is the complex structure, we have $Je_1=e_2$ and $Je_2=-e_1.$ This implies that, at $p,$
\[
\begin{split}
\di JX &= \langle\n_{e_1}JX,e_1\rangle + \langle\n_{e_2}JX,e_2\rangle = e_1\langle JX,e_1\rangle + e_2\langle JX,e_2\rangle\\
&=-e_1\langle X,Je_1\rangle - e_2\langle X,Je_2\rangle = - e_1\langle X,e_2\rangle + e_2\langle X,e_1\rangle\\
&=-\langle\n_{e_1}X,e_2\rangle + \langle\n_{e_2}X,e_1\rangle=0.\\
\end{split}
\]
To prove that $\langle \n_Z JX, Y\rangle = \langle \n_Y JX, Z\rangle,$ we need only prove that $\langle \n_{e_1} JX, e_2\rangle=\langle \n_{e_2} JX, e_1\rangle$ and then use the linearity of the connection and the linearity of the inner product. Since $\di X =\langle \n_{e_1} X, e_1\rangle + \langle \n_{e_2} X, e_2\rangle=0,$ we have at $p,$
\[
\begin{split}
\langle \n_{e_1} JX, e_2\rangle&= e_1\langle JX, e_2\rangle = -e_1\langle X,Je_2\rangle = e_1\langle X,e_1\rangle\\
                         &=\langle \n_{e_1} X, e_1\rangle = - \langle \n_{e_2} X, e_2\rangle =-e_2\langle X,e_2\rangle\\
                         &= -e_2\langle X,Je_1\rangle=e_2\langle JX,e_1\rangle\\
                         &= \langle \n_{e_2} JX, e_1\rangle.\\
\end{split}
\]
Therefore, by using Lemma \ref{harm-field} we conclude that $JX$ is harmonic.
 
\end{proof}

This implies that the space of harmonic vector fields is even dimensional. In fact, by using the de Rham cohomology theory (see, for example, \cite{petersen}, p. 194), it can be proven that
\[
\dim H^1(\Sigma,\R) = 2\genus(\Sigma).
\]

The main strategy in the proof of Theorems \ref{theo-warped-1}, \ref{theo-warped-2}, and \ref{theo-stab-general}, is the following: We assume, by contradiction, that $\genus(\Sigma)\geq 1.$ This will give us two linearly independent harmonic vector fields (which we call $X$ and $JX$) and, by using the geometric assumptions of these theorems we will obtain a contradiction, concluding that $\genus(\Sigma)=0.$

First let us fix some notations. We will denote by $D$ the connection of $\R^n,$ $\overline{\n}$ the connection of $M^3$ and $\n$ the connection of $\Sigma.$ Denote also by $\overline{II}^\alpha,$ $\alpha=4,\ldots,n,$ the second fundamental forms of $M^3$ in $\R^n,$ and by $II$ the second fundamental form of $\Sigma$ in $M^3,$ with associated shape operator $A:T\Sigma\rightarrow T\Sigma.$ 

Let $E_1,E_2,\ldots,E_n$ the canonical basis of $\R^n,$ $u_i = \langle X,E_i\rangle,$ where $X$ is a harmonic field and $u_i^* = \langle JX,E_i\rangle$. Since $E_i$ are the gradient of the coordinate functions of $\R^n,$ $X$ and $JX$ are harmonic, and $\Sigma$ is compact without boundary, by equation (\ref{div-0}), p. \pageref{div-0}, we have
\[
\int_\Sigma u_i d\Sigma =0\ \mbox{and}\ \int_\Sigma u_i^* d\Sigma =0.
\] 
Given a smooth function $f:\Sigma\rightarrow\R,$ let
\[
Q(f,f) = f\Delta_\Sigma f + (\ric_M(N,N) + \|A\|^2)f^2,
\]
the integrand of the Jacobi operator (\ref{jacobi}), where here $N$ is the unitary normal vector field of $\Sigma$ in $M^3$. Let us denote by
\[
Q(X,X) = \sum_{i=1}^n Q(u_i,u_i) \ \mbox{and} \ Q(JX,JX) = \sum_{i=1}^n Q(u_i^*,u_i^*).
\]

The following two lemmas will be useful in the proof of the main proposition of this section.
\begin{lemma}\label{previous}
Let $\Sigma$ be a surface immersed in a three-dimensional Riemannian manifold $M^3$ and let $A:T\Sigma\rightarrow T\Sigma$ be the shape operator of $\Sigma$ with mean curvature $H.$ Then, for any vector field $X$ of $T\Sigma,$ we have
\[
\|AX\|^2 = 2H\langle AX,X\rangle  - K_e\|X\|^2,
\]
where $K_e = \det A$ is the extrinsic curvature of $\Sigma.$
\end{lemma}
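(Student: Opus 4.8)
The plan is to exploit the fact that the shape operator $A$ is, at each point $p\in\Sigma$, a self-adjoint endomorphism of the \emph{two-dimensional} vector space $T_p\Sigma$, and therefore satisfies its own characteristic equation by the Cayley--Hamilton theorem. First I would recall that, since $\dim\Sigma=2$, the characteristic polynomial of $A$ is $\lambda^2-(\tr A)\lambda+\det A$, so that
\[
A^2=(\tr A)\,A-(\det A)\,\mathrm{Id}
\]
as an identity of endomorphisms of $T\Sigma$. Then I would substitute $\tr A=2H$ (the trace of the shape operator is twice the mean curvature in the normalization used here) and $\det A=K_e$.

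Next I would apply this operator identity to the vector field $X$ and pair it with $X$ in the induced metric:
\[
\langle A^2X,X\rangle=2H\langle AX,X\rangle-K_e\langle X,X\rangle.
\]
Finally, using that $A$ is self-adjoint, $\langle A^2X,X\rangle=\langle AX,AX\rangle=\|AX\|^2$, which is exactly the claimed formula.

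Alternatively, and this is really the same computation carried out in coordinates, at a fixed point one may choose an orthonormal basis $\{e_1,e_2\}$ of eigenvectors of $A$ with $Ae_i=k_ie_i$, write $X=x_1e_1+x_2e_2$, and verify directly that $\|AX\|^2=k_1^2x_1^2+k_2^2x_2^2$ coincides with $(k_1+k_2)(k_1x_1^2+k_2x_2^2)-k_1k_2(x_1^2+x_2^2)$, using $2H=k_1+k_2$ and $K_e=k_1k_2$. There is no genuine obstacle in this lemma; the only point that needs a little care is keeping track of the normalization of $H$ (mean versus sum of the principal curvatures) so that the identity $\tr A=2H$ holds.
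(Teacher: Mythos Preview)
Your proof is correct. The paper's argument is precisely your ``alternative'' computation: it picks an orthonormal eigenframe $\{e_1,e_2\}$ with $Ae_i=\lambda_i e_i$, expands $2H\langle AX,X\rangle=(\lambda_1+\lambda_2)(\lambda_1\langle X,e_1\rangle^2+\lambda_2\langle X,e_2\rangle^2)$, and regroups to get $\|AX\|^2+\lambda_1\lambda_2\|X\|^2$. Your primary route via Cayley--Hamilton ($A^2=2H\,A-K_e\,\mathrm{Id}$, then pair $\langle A^2X,X\rangle=\|AX\|^2$ by self-adjointness) is a cleaner, coordinate-free packaging of the same identity; it avoids choosing an eigenbasis and makes transparent that only two-dimensionality and symmetry of $A$ are used.
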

\begin{proof}
Let $\{e_1,e_2\}$ be an orthonormal frame of eigenvectors of $A.$ We have $AX=\lambda_1\langle X,e_1\rangle e_1 + \lambda_2\langle X,e_2\rangle e_2$ and thus
\[
\begin{split}
2H\langle AX,X\rangle &= (\lambda_1+\lambda_2)\langle\lambda_1\langle X,e_1\rangle e_1 + \lambda_2\langle X,e_2\rangle e_2,\langle X,e_1\rangle e_1 + \langle X,e_2\rangle e_2 \rangle\\
&=(\lambda_1+\lambda_2)\lambda_1\langle X,e_1\rangle^2 + (\lambda_1+\lambda_2)\lambda_2\langle X,e_2\rangle^2\\
&=\lambda_1^2\langle X,e_1\rangle^2 + \lambda_2^2\langle X,e_2\rangle^2 + \lambda_1\lambda_2(\langle X,e_1\rangle^2 + \langle X,e_2\rangle^2)\\
&=\|AX\|^2 + \lambda_1\lambda_2\|X\|^2.
\end{split}
\]
 
\end{proof}


\begin{lemma}[Bochner's formula]\label{bochner}
Let $\Sigma$ be a Riemannian manifold of arbitrary dimension. If $\ric$ denotes the Ricci tensor of $\Sigma,$ then
\begin{equation}\label{bochner-eq}
\di(\n_VX)=\lan V,\n_\Sigma(\di X)\ran + \ric(V,X) + \tr(\n X \circ \n V)
\end{equation}
for every vector fields $V,X\in T\Sigma.$ Here $\n X:T\Sigma\ria T\Sigma$ is given by $(\n X)(u)=\n_u X$ and $\tr S$ denotes the trace of the linear operator $S:T\Sigma\ria T\Sigma.$
\end{lemma}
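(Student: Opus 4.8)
The plan is to verify \eqref{bochner-eq} pointwise: fix an arbitrary $p\in\Sigma$ and work in a frame adapted to $p$. First I would choose a local orthonormal frame $\{e_1,\dots,e_k\}$, $k=\dim\Sigma$, which is geodesic at $p$ (the frame obtained by parallel transporting an orthonormal basis of $T_p\Sigma$ along radial geodesics), so that $\n_{e_i}e_j$ vanishes at $p$ for all $i,j$. The key consequence I would record is that then $\n_Y e_i$ vanishes at $p$ for \emph{every} $Y\in T_p\Sigma$, since $Y$ is a linear combination of the $e_j$. Writing $\di(\n_VX)=\sum_i\langle\n_{e_i}\n_VX,e_i\rangle$ and inserting the definition of the curvature operator $R(e_i,V)X=\n_{e_i}\n_VX-\n_V\n_{e_i}X-\n_{[e_i,V]}X$, I would split the divergence into three sums:
\[
\di(\n_VX)=\sum_i\langle R(e_i,V)X,e_i\rangle+\sum_i\langle\n_V\n_{e_i}X,e_i\rangle+\sum_i\langle\n_{[e_i,V]}X,e_i\rangle .
\]

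Each sum produces one of the three terms on the right of \eqref{bochner-eq}. The first is by definition the Ricci contraction $\ric(V,X)$ (with the sign convention used in \eqref{jacobi}). For the second, I would differentiate the function $\di X=\sum_i\langle\n_{e_i}X,e_i\rangle$ in the direction $V$: the Leibniz rule and $\n_Ve_i=0$ at $p$ give, at $p$, $\sum_i\langle\n_V\n_{e_i}X,e_i\rangle=V(\di X)=\langle V,\n_\Sigma(\di X)\rangle$. For the third, I would use that the connection is torsion free, $[e_i,V]=\n_{e_i}V-\n_Ve_i$, which at $p$ reduces to $\n_{e_i}V=(\n V)(e_i)$; hence $\sum_i\langle\n_{[e_i,V]}X,e_i\rangle=\sum_i\langle(\n X)((\n V)(e_i)),e_i\rangle=\tr(\n X\circ\n V)$ at $p$. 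Adding the three contributions gives \eqref{bochner-eq} at $p$, and since $p$ is arbitrary it holds on all of $\Sigma$.

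This is a standard computation and I do not anticipate a genuine obstacle. The only points deserving attention are the curvature/contraction sign convention, so that $\sum_i\langle R(e_i,V)X,e_i\rangle$ is indeed $\ric(V,X)$ and not its negative, and the observation that in a geodesic frame one has $\n_Ye_i=0$ at $p$ for \emph{all} $Y$ and not merely for $Y=e_j$, which is precisely what makes the extra terms coming from the product rule and from $[e_i,V]=\n_{e_i}V-\n_Ve_i$ vanish.
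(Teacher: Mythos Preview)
Your proof is correct and is essentially the same computation as the paper's: both pick a geodesic orthonormal frame at $p$, expand $\sum_i\langle\n_{e_i}\n_VX,e_i\rangle$ via the curvature tensor, and identify the three resulting sums with $\ric(V,X)$, $\langle V,\n_\Sigma(\di X)\rangle$, and $\tr(\n X\circ\n V)$ using $\n_Ve_i|_p=0$. The only cosmetic difference is that the paper starts from the Ricci term and solves for $\di(\n_VX)$, whereas you start from $\di(\n_VX)$ directly; the substance and the care points (sign convention, vanishing of $\n_Ve_i$ at $p$) are identical.
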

\begin{proof}
Denote by $m$ the dimension of $\Sigma.$ Fixing an arbitrary $p\in\Sigma,$ let $\{e_1,e_2,\ldots,e_m\}$ be an orthonormal frame of $T\Sigma,$ which is geodesic at $p.$ We have, at $p,$
\[
\begin{split}
\ric(V,X)&=\sum_{i=1}^m \lan R(V,e_i)X,e_i\ran\\
&=\sum_{i=1}^m\lan \n_{e_i}\n_VX,e_i\ran - \sum_{i=1}^m\lan \n_V\n_{e_1}X,e_i\ran +\sum_{i=1}^m\lan \n_{[V,e_i]}X,e_i\ran\\
&=\di(\n_VX) - \sum_{i=1}^n V\lan\n_{e_i}X,e_i\ran + \sum_{i=1}^m\lan \n_{\n_Ve_i - \n_{e_i}V}X,e_i\ran\\
&=\di(\n_VX) - V(\di X) - \sum_{i=1}^m\lan \n_{\n_{e_i}V}X,e_i\ran\\
&=\di(\n_VX) - \lan V,\n(\di X)\ran - \tr(\n X\circ \n V),\\
\end{split}
\]
since $\n_V e_i = \sum_{j=1}^m \lan V,e_j\ran \n_{e_j}e_i=0$ at $p.$ Here, $R$ denotes the curvature tensor of $\Sigma.$ The result then follows.
 
\end{proof}

Now we state the main proposition of this section.
\begin{proposition}\label{main-1}
Let $M^3$ be a three-dimensional Riemannian manifold isometrically immersed into $\R^n,\ n\geq4,$ and let $\Sigma$ be a constant mean curvature $H$ surface of $M^3.$ If $e_1,e_2$ is an orthonormal frame of $\Sigma$ which is geodesic at $p\in\Sigma,$ then at $p,$ we have
\begin{equation}\label{eq.prop.1}
\begin{split}
Q(X,X) + Q(JX,JX) &= (4H^2 + 6\scal_M)\|X\|^2\\
&\qquad - \sum_{\alpha=4}^n\sum_{i=1}^2\left[\overline{II}^\alpha(e_i,X)^2 + \overline{II}^\alpha(e_i,JX)^2\right],
\end{split}
\end{equation}
where $\scal_M$ is the normalized scalar curvature of $M$ and $\overline{II}^\alpha$ are the second fundamental form of $M^3$ in $\R^n$ associated with each normal $\overline{N}^\alpha, \ \alpha=4,\ldots,n.$
\end{proposition}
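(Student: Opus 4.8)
The plan is to compute $Q(u_i,u_i)$ directly from the definition $Q(f,f)=f\Delta_\Sigma f + (\ric_M(N,N)+\|A\|^2)f^2$, sum over $i$, and then add the analogous expression for $JX$. The main work is to understand $\Delta_\Sigma u_i$ where $u_i=\langle X,E_i\rangle$ and $X$ is a harmonic vector field on $\Sigma$. Since $E_i$ is the (constant) gradient field of the $i$-th coordinate in $\R^n$, I would split $E_i = E_i^\top + (E_i)^{M,\perp} + (E_i)^{\R^n,\perp}$, i.e. decompose along $T\Sigma$, along $N$ (normal to $\Sigma$ in $M$), and along the normal bundle of $M$ in $\R^n$. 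Then $u_i = \langle X,E_i^\top\rangle$ since $X$ is tangent to $\Sigma$, and $\sum_i u_i E_i^\top = X$, $\sum_i u_i\langle E_i,N\rangle = 0$, and $\sum_i u_i^2 = \|X\|^2$. These contraction identities (Gauss-type formulas: $D_V E_i = 0$ gives $\n_V E_i^\top = \langle AV,E_i^\top\rangle N + (\text{normal stuff})$, etc.) are the bookkeeping backbone; I would also record $\sum_i \langle E_i,N\rangle E_i^\top = -\,(\text{tangential part of }N)$-type relations and $\sum_\alpha \sum_i \overline{II}^\alpha(\cdot,\cdot)^2$ emerging from the $\R^n$-normal components.

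Next I would apply Bochner's formula (Lemma \ref{bochner}) to $X$ on $\Sigma$: since $X$ is harmonic, $\di X = 0$ and $\n X$ is self-adjoint, so $\di(\n_V X) = \ric_\Sigma(V,X) + \tr(\n X\circ\n V)$. Taking $V = e_1, e_2$ and using that $\Sigma$ is a surface (so $\ric_\Sigma = K_\Sigma\cdot\mathrm{Id}$ with $K_\Sigma$ the Gauss curvature, and $\tr(\n X\circ\n X) = \|\n X\|^2$), I get a Weitzenböck-type identity relating $\Delta_\Sigma$-quantities of the $u_i$'s to $K_\Sigma\|X\|^2$ and $\|\n X\|^2$. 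Summing $\sum_i u_i\Delta_\Sigma u_i$ and using $\Delta_\Sigma u_i = \di(\n u_i)$ together with the Gauss-equation decomposition of $\n u_i = \n_{(\cdot)}X$-pieces plus second-fundamental-form pieces, the $\|\n X\|^2$ terms should cancel between the $X$ and $JX$ contributions (because $\|\n (JX)\|^2 = \|\n X\|^2$ pointwise, $J$ being parallel in dimension $2$), leaving only curvature terms. Here I would use the Gauss equation for $\Sigma\subset M^3$ to write $K_\Sigma = K_s + K_e$ (intrinsic $=$ sectional curvature of $M$ on $T\Sigma$ plus extrinsic $\det A$), and Lemma \ref{previous} to handle $\|AX\|^2 = 2H\langle AX,X\rangle - K_e\|X\|^2$; the $\langle AX,X\rangle$ terms should vanish upon adding $X$ and $JX$ since $\langle A(JX),JX\rangle = (\tr A)\|X\|^2 - \langle AX,X\rangle = 2H\|X\|^2 - \langle AX,X\rangle$, which is exactly the kind of identity that produces the clean $4H^2\|X\|^2$ coefficient.

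Finally I would convert sectional/Gauss curvatures of $M^3$ into the normalized scalar curvature $\scal_M$. For a $3$-manifold, $\ric_M(N,N) + (\text{sum of two sectional curvatures spanning }T\Sigma)$ repackages into a multiple of $\scal_M$; combined with the Gauss equation of $M^3\subset\R^n$ (which introduces $\ric_M$, $\scal_M$, and the $\overline{II}^\alpha$ terms via $\|\mathcal H\|^2$ and $\sum_\alpha\|\overline{II}^\alpha\|^2$-type expressions), the ambient-extrinsic contribution is precisely the $-\sum_{\alpha}\sum_i[\overline{II}^\alpha(e_i,X)^2 + \overline{II}^\alpha(e_i,JX)^2]$ appearing on the right. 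The coefficients $4H^2$ and $6\scal_M$ are then read off after all cancellations. I expect the \textbf{main obstacle} to be the careful tracking of the three-step normal decomposition ($T\Sigma$, $N$, normal bundle of $M$ in $\R^n$) through $\Delta_\Sigma u_i$ — in particular making sure the mixed terms involving both $A$ (shape operator of $\Sigma$ in $M$) and $\overline{II}^\alpha$ (second fundamental forms of $M$ in $\R^n$) organize exactly into the stated squares and do not leave residual cross terms; getting the numerical constant $6$ (rather than, say, $3$) right depends on which normalization of $\scal_M$ is used and on not double-counting sectional curvatures, so I would verify the constant on the model case $M^3 = \R^3\subset\R^4$ (where $\scal_M = 0$, $\overline{II} = 0$, and the identity must reduce to the known Euclidean computation giving $4H^2\|X\|^2$).
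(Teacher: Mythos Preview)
Your approach is essentially the paper's: compute $\sum_j u_j\Delta_\Sigma u_j$ via the ambient derivative $D_{e_i}D_{e_i}X$, use Bochner plus harmonicity of $X$ to identify the intrinsic piece, invoke Lemma~\ref{previous} for $\|AX\|^2$, and finish with the trace identity $\langle AX,X\rangle+\langle AJX,JX\rangle=2H\|X\|^2$. Two corrections to your expectations. First, no $\|\n X\|^2$ term survives even before adding the $JX$ contribution: the Bochner step for harmonic $X$ already gives $\sum_i\langle\n_{e_i}\n_{e_i}X,X\rangle=K\|X\|^2$ with nothing left over, so the cancellation is internal to the $X$ computation, not between $X$ and $JX$ (and note that two \emph{equal} positive terms would not cancel upon addition anyway). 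Second, the mixed $A$--$\overline{II}^\alpha$ cross terms you worry about never appear: when you contract $D_{e_i}D_{e_i}X$ against $X\in T\Sigma$, every summand carrying an $N$- or $\overline{N}^\alpha$-factor vanishes because $\langle N,X\rangle=\langle\overline{N}^\alpha,X\rangle=0$, leaving cleanly $-\|AX\|^2-\sum_\alpha\sum_i\overline{II}^\alpha(e_i,X)^2$.
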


\begin{proof}
All the computations in this proof are made at $p\in\Sigma.$ Initially, let us calculate the Laplacian of $u_j.$ Since 
\[
\Delta_\Sigma u_j = \sum_{i=1}^2 \langle D_{e_i}D_{e_i} X, E_j\rangle,
\]
we need to calculate $D_{e_i}D_{e_i} X.$ Taking the first covariant derivative, we have
\[
\begin{split}
D_{e_i}X &= \overline{\n}_{e_i}X + \sum_{\alpha=4}^n\overline{II}^\alpha(e_i,X)\overline{N}^\alpha\\
         &= \n_{e_i}X + II(e_i,X)N + \sum_{\alpha=4}^n\overline{II}^\alpha(e_i,X)\overline{N}^\alpha, 
\end{split}
\]
which implies, by taking the covariant derivative again,
\[
\begin{split}
D_{e_i}D_{e_i} X&= D_{e_i}\n_{e_i} X + D_{e_i}(II(e_i,X)N) + \sum_{\alpha=4}^nD_{e_i}(\overline{II}^\alpha(e_i,X)\overline{N}^\alpha)\\
                &= \n_{e_i}\n_{e_i} X + II(e_i,\n_{e_i} X)N + \sum_{\alpha=4}^n\overline{II}^\alpha(e_i,\n_{e_i}X)\overline{N}^\alpha\\
                &\qquad + e_i(II(e_i,X))N + II(e_i,X)\overline{\n}_{e_i}N + II(e_i,X)\sum_{\alpha=4}^n\overline{II}^\alpha(e_i,N)\overline{N}^\alpha \\
                &\qquad + \sum_{\alpha=4}^n e_i(\overline{II}^\alpha(e_i,X))\overline{N}^\alpha+ \sum_{\alpha=4}^n\overline{II}^\alpha(e_i,X)D_{e_i}\overline{N}^\alpha\\
                &=\n_{e_i}\n_{e_i} X + [\langle Ae_i,\n_{e_i}X\rangle + e_i\langle e_i, AX\rangle] N - \langle AX,e_i\rangle Ae_i \\
                &\qquad + \sum_{\alpha=4}^n\left[\overline{II}^\alpha(e_i,\n_{e_i}X) + II(e_i,X)\overline{II}^\alpha(e_i,N)+e_i(\overline{II}^\alpha(e_i,X))\right]\overline{N}^\alpha\\
                &\qquad+ \sum_{\alpha=4}^n\overline{II}^\alpha(e_i,X)D_{e_i}\overline{N}^\alpha,\\
\end{split}
\]
because $D_{e_i}N = \overline{\n}_{e_i}N + \sum_{\alpha=4}^n\overline{II}^\alpha(e_i,N)\overline{N}^\alpha.$ This implies
\[
\begin{split}
\sum_{j=1}^n u_j\Delta_\Sigma u_j &= \sum_{i=1}^2\sum_{j=1}^n \langle D_{e_i}D_{e_i} X, E_j\rangle\langle E_j,X\rangle\\
                                  &= \sum_{i=1}^2\langle \n_{e_i}\n_{e_i} X,X\rangle - \sum_{i=1}^2 \langle AX,e_i\rangle\langle Ae_i,X\rangle\\
                                  &\qquad +\sum_{\alpha=4}^n\sum_{i=1}^2 \overline{II}^\alpha(e_i,X)\langle D_{e_i}\overline{N}^\alpha,X\rangle,\\   
\end{split}
\]
provided 
\[
\sum_{j=1}^n \langle N,E_j\rangle\langle E_j,X\rangle= \langle N,X\rangle=0 \ \mbox{and} \ \sum_{j=1}^n \langle \overline{N}^\alpha,E_j\rangle\langle E_j,X\rangle= \langle \overline{N}^\alpha,X\rangle=0.
\] 
Since
\[
\sum_{i=1}^2 \langle AX,e_i\rangle\langle Ae_i,X\rangle = \sum_{i=1}^2 \langle AX,e_i\rangle\langle e_i,AX\rangle = \|AX\|^2
\]
and
\[
\langle D_{e_i}\overline{N}^\alpha,X\rangle= -\overline{II}^\alpha(e_i,X),
\]
we obtain
\begin{equation}\label{u-lap-u}
\sum_{j=1}^n u_j\Delta_\Sigma u_j = \sum_{i=1}^2\langle \n_{e_i}\n_{e_i} X,X\rangle - \|AX\|^2 - \sum_{\alpha=4}^n\sum_{i=1}^2 \overline{II}^\alpha(e_i,X)^2.
\end{equation}
Now, let us calculate a more suitable expression for $\sum_{i=1}^2\langle \n_{e_i}\n_{e_i} X,X\rangle.$  Since $X$ is harmonic, we have $\langle\n_V X,e_i\rangle =\langle\n_{e_i}X,V\rangle$ for every $V\in T\Sigma.$ By using that $e_1,e_2$ is a geodesic frame at $p$, this implies
\[
\begin{split}
\di(\n_VX)&=\sum_{i=1}^2\langle\n_{e_i}\n_VX,e_i\rangle =\sum_{i=1}^2e_i\langle\n_VX,e_i\rangle =\sum_{i=1}^2e_i\langle\n_{e_i}X,V\rangle\\
          &=\sum_{i=1}^2\langle\n_{e_i}\n_{e_i}X,V\rangle + \sum_{i=1}^2\langle\n_{e_i}X,\n_{e_i}V\rangle,\\
\end{split}
\]
i.e.,
\begin{equation}\label{eq.mmm}
\sum_{i=1}^2\langle\n_{e_i}\n_{e_i}X,V\rangle = \di(\n_VX) - \sum_{i=1}^2\langle\n_{e_i}X,\n_{e_i}V\rangle.
\end{equation}
On the other hand, since $\Sigma$ has dimension $2,$ the Bochner's formula (\ref{bochner-eq}), p. \pageref{bochner-eq}, becomes
\[
\di(\n_VX) =\langle V,\n(\di X)\rangle + K\langle X,V\rangle + \tr(\n X\circ\n V),\\
\]
where $K$ is the Gaussian curvature of $\Sigma.$ Since $X$ is harmonic, using Lemma \ref{harm-field}, p. \pageref{harm-field}, we have
\[
\di X =0, \ \mbox{and} \ \tr(\n X\circ\n V)= \sum_{i=1}^2 \lan \n_{\n_{e_i}V}X,e_i \ran = \sum_{i=1}^2\lan \n_{e_i}X,\n_{e_i}V\ran.
\]
Replacing the last two equations in the Bochner's formula we obtain
\[
\di(\n_VX) =K\langle X,V\rangle + \sum_{i=1}^2\langle\n_{e_i}X,\n_{e_i}V\rangle,
\]
which implies, by using equation (\ref{eq.mmm}), that
\[
\sum_{i=1}^2\langle\n_{e_i}\n_{e_i}X,V\rangle =K\langle X,V\rangle.
\]
Thus, equation (\ref{u-lap-u}) becomes
\begin{equation}\label{eq.lap}
\sum_{j=1}^n u_j\Delta_\Sigma u_j = K\|X\|^2 - \|AX\|^2 - \sum_{\alpha=4}^n\sum_{i=1}^2 \overline{II}^\alpha(e_i,X)^2.
\end{equation}
By using Lemma \ref{previous}, p. \pageref{previous}, in (\ref{eq.lap}), we have
\[
\sum_{j=1}^n u_j\Delta_\Sigma u_j = (K+K_e)\|X\|^2 - 2H\langle AX,X\rangle - \sum_{\alpha=4}^n\sum_{i=1}^2 \overline{II}^\alpha(e_i,X)^2.
\]
Since, by the Gauss Equation, $K_e = K - \overline{K}(e_1,e_2),$ where $\overline{K}(e_1,e_2)$ is the sectional curvature of $M^3$ in $T\Sigma,$ we obtain
\[
\sum_{j=1}^n u_j\Delta_\Sigma u_j = (2K_e + \overline{K}(e_1,e_2))\|X\|^2 - 2H\langle AX,X\rangle - \sum_{\alpha=4}^n\sum_{i=1}^2 \overline{II}^\alpha(e_i,X)^2.
\]
By using $4H^2 = \|A\|^2 + 2K_e,$ we have
\begin{equation}\label{eq.lap-final}
\sum_{j=1}^n u_j\Delta_\Sigma u_j = (4H^2 - \|A\|^2 + \overline{K}(e_1,e_2))\|X\|^2 - 2H\langle AX,X\rangle - \sum_{\alpha=4}^n\sum_{i=1}^2 \overline{II}^\alpha(e_i,X)^2.
\end{equation}
Therefore
\[
\begin{split}
Q(X,X)&=(4H^2 - \|A\|^2 + \overline{K}(e_1,e_2))\|X\|^2 - 2H\langle AX,X\rangle - \sum_{\alpha=4}^n\sum_{i=1}^2 \overline{II}^\alpha(e_i,X)^2\\
&\qquad + (\ric_M (N,N) + \|A\|^2)\|X\|^2\\
&= (4H^2 + 3\scal_M)\|X\|^2  - 2H\langle AX,X\rangle - \sum_{\alpha=4}^n\sum_{i=1}^2 \overline{II}^\alpha(e_i,X)^2,\\
\end{split}
\]
provided 
\[\overline{K}(e_1,e_2) + \ric_M (N,N) = \overline{K}(e_1,e_2) + \overline{K}(e_1,N) + \overline{K}(e_2,N) = 3\scal_M\]
and
\[
\sum_{j=1}^n u_j^2 = \sum_{j=1}^n \langle X,E_j\rangle^2 = \|X\|^2,
\]
where $\overline{K}(e_i,N),$ $i=1,2,$ is sectional curvature of $M^3$ in the plane spanned by $e_i$ and $N.$ Analogously,
\[
Q(JX,JX) =\left(4H^2 + 3\scal_M\right)\|JX\|^2  - 2H\langle AJX,JX\rangle - \sum_{\alpha=4}^n\sum_{i=1}^2 \overline{II}^\alpha(e_i,JX)^2.
\]
Note that, since $\|JX\|=\|X\|$ and $\langle JX,X\rangle=0,$ i.e., $X$ and $JX$ is an orthogonal frame of $\Sigma$, we have
\[
\begin{split}
2H\langle AX,X\rangle + 2H\langle AJX,JX\rangle &= 2H\left[\left\langle A\dfrac{X}{\|X\|},\dfrac{X}{\|X\|}\right\rangle + \left\langle A\dfrac{JX}{\|JX\|}\!,\!\dfrac{JX}{\|JX\|}\right\rangle \right]\|X\|^2\\
& = 2H (\trace A) \|X\|^2\\
& = 4H^2\|X\|^2.\\
\end{split}
\]
This implies
\[
\begin{split}
Q(X,X) + Q(JX,JX) =& 2\left(4H^2 + 3\scal_M\right)\|X\|^2 - 4H^2\|X\|^2\\
                   &- \sum_{\alpha=4}^n\sum_{i=1}^2[ \overline{II}^\alpha(e_i,X)^2 + \overline{II}^\alpha(e_i,JX)^2]\\
                   &= (4H^2 + 6\scal_M)\|X\|^2 \\
                   &\qquad - \sum_{\alpha=4}^n\sum_{i=1}^2[ \overline{II}^\alpha(e_i,X)^2 + \overline{II}^\alpha(e_i,JX)^2].\\
\end{split}
\]
 
\end{proof}

\section{Proof of Theorems \ref{theo-warped-1} and \ref{theo-warped-2}}

In order to prove Theorems \ref{theo-warped-1} and \ref{theo-warped-2}, let us look the warped product manifold as a hypersurface of the Euclidean space of the Lorentzian space.

Let $\mathbb{L}^{n+2}$ be $\R^{n+2}$ with the (pseudo)metric $\langle\cdot,\cdot\rangle = \kappa dx_0^2 + dx_1^2+\cdots+dx_{n+1}^2,$ where $\kappa=\pm1.$ If $\kappa=1$ then $\mathbb{L}^{n+2}$ is just $\R^{n+2}$ with the canonical metric, and if $\kappa=-1$ then $\mathbb{L}^{n+2}$ is the Lorentzian space with its usual pseudo-metric.

Fixed the smooth function $h:I\rightarrow\R,$ where $I=(0,b)$ or $I=(0,\infty),$ let us define $f:I\rightarrow\R$ by the equation
\[
\kappa f'(t)^2 + h'(t)^2=1.
\] 
Let $\om:\R^n\rightarrow\R^{n+1}$ be the canonical immersion of the unit sphere $\s^n$ in polar coordinates, i.e., for $\theta=(\vp_1,\vp_2,\ldots,\vp_n)\in\R^n,$ we have
\[
\om(\theta)=(x_1(\theta),x_2(\theta),\ldots,x_{n+1}(\theta)),
\] 
where
\[
\begin{split}
x_1&=\cos(\vp_1),\\
x_2&=\sin(\vp_1)\cos(\vp_2),\\
x_3&=\sin(\vp_1)\sin(\vp_2)\cos(\vp_3),\\
\vdots&\\
x_n&=\sin(\vp_1)\cdots \sin(\vp_{n-1})\cos(\vp_n),\\
x_{n+1}&=\sin(\vp_1)\cdots \sin(\vp_{n-1})\sin(\vp_n).\\
\end{split}
\]
Consider $F:I\times\R^n\subset\R^{n+1}\rightarrow\mathbb{L}^{n+2}$ be the immersion 
\[
F(t,\theta)=(f(t),h(t)\om(\theta)).
\] 
Denoting by $F_t = \dfrac{\partial F}{\partial t},$ $F_i = \dfrac{\partial F}{\partial \vp_i},$ and $\om_i=\dfrac{\partial \om}{\partial\vp_i},$ we have
\[
F_t=(f'(t),h'(t)\omega) \ \mbox{and} \ F_i = (0,h(t)\om_i).
\]
Since $\langle \om,\om_i\rangle=0$ and $\langle \om_i,\om_j\rangle=0$ for $i\neq j$, the first fundamental form of this immersion is
\[
\langle\cdot,\cdot\rangle = dt^2 + h(t)^2d\om^2,
\]
where $d\om^2 = \sum_{i=1}^n \|\om_i\|^2 d\vp_i^2$ is the first fundamental form of $\s^n$ parametrized by $\om.$ Next let us find the second fundamental form of the immersion $F.$ 

\begin{proposition}\label{II-warped}
The second fundamental form of $M^{n+1}=F(I\times\R^n)$ in $\mathbb{L}^{n+2}$ is given by
\[
\overline{II} = \kappa \sqrt{|K_{\tan}(t)|}\langle\cdot,\cdot\rangle - \dfrac{K_{\tan}(t) - K_{\rad}(t)}{\sqrt{|K_{\tan}(t)|}}dt^2.
\]
\end{proposition}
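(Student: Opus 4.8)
The plan is to produce an explicit unit normal $\overline{N}$ along $F$ and then read off $\overline{II}$ by differentiating the coordinate vector fields $F_t,F_1,\dots,F_n$ in $\mathbb{L}^{n+2}$ and projecting onto $\overline{N}$, finally rewriting the result in terms of $K_{\tan}$ and $K_{\rad}$. Searching for $\overline{N}$ of the form $(a(t),b(t)\om(\theta))$, orthogonality to each $F_i=(0,h\om_i)$ holds automatically since $\langle\om,\om_i\rangle=0$, while orthogonality to $F_t=(f',h'\om)$ requires $\kappa a f'+bh'=0$; taking $a=h'$ and $b=-\kappa f'$ gives $\overline{N}=(h'(t),-\kappa f'(t)\om(\theta))$, and the defining relation $\kappa f'^2+h'^2=1$ gives $\langle\overline{N},\overline{N}\rangle=\kappa$, so $\overline{N}$ is a unit normal, spacelike if $\kappa=1$ and timelike if $\kappa=-1$.

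Next, since the ambient connection $D$ of $\mathbb{L}^{n+2}$ is just coordinatewise differentiation, I would compute $D_{F_t}F_t=(f'',h''\om)$, $D_{F_t}F_i=(0,h'\om_i)$ and $D_{F_i}F_j=(0,h\om_{ij})$ with $\om_{ij}=\de^2\om/\de\vp_i\de\vp_j$, and take $\overline{II}(U,V)=\langle D_UV,\overline{N}\rangle$. This yields $\overline{II}(F_t,F_t)=\kappa(f''h'-f'h'')$, $\overline{II}(F_t,F_i)=0$, and $\overline{II}(F_i,F_j)=-\kappa f'h\langle\om_{ij},\om\rangle$; differentiating $\langle\om_i,\om\rangle=0$ gives $\langle\om_{ij},\om\rangle=-\langle\om_i,\om_j\rangle$, so the last quantity equals $\kappa f'h\langle\om_i,\om_j\rangle$. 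Since $\langle F_i,F_j\rangle=h^2\langle\om_i,\om_j\rangle$, this says $\overline{II}=\frac{\kappa f'}{h}\langle\cdot,\cdot\rangle$ on directions tangent to the spheres and that $\overline{II}$ has no mixed part.

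It remains to convert to curvature language. From $\kappa f'^2+h'^2=1$ we get $1-h'^2=\kappa f'^2$, hence $K_{\tan}(t)=\frac{1-h'^2}{h^2}=\frac{\kappa f'^2}{h^2}$, so $|K_{\tan}(t)|=\frac{f'^2}{h^2}$ and, after fixing $f'\ge 0$, $\sqrt{|K_{\tan}(t)|}=\frac{f'}{h}$; also $K_{\rad}(t)=-\frac{h''}{h}$, and differentiating the defining relation gives $\kappa f'f''=-h'h''$. On directions tangent to the spheres, $\frac{\kappa f'}{h}=\kappa\sqrt{|K_{\tan}|}$, which matches the asserted formula there because $dt^2$ annihilates those directions; on $(\de_t,\de_t)$ the asserted formula evaluates to $\kappa\sqrt{|K_{\tan}|}-\frac{K_{\tan}-K_{\rad}}{\sqrt{|K_{\tan}|}}=-\frac{h''}{f'}$, and the identity $\kappa(f''h'-f'h'')=-\frac{h''}{f'}$ follows from $\kappa f'f''=-h'h''$ together with $\kappa f'^2-1=-h'^2$. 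Since $\langle\cdot,\cdot\rangle=dt^2+h^2d\om^2$ and $dt^2$ only sees the $\de_t$-direction, assembling the three computations produces exactly $\overline{II}=\kappa\sqrt{|K_{\tan}(t)|}\langle\cdot,\cdot\rangle-\frac{K_{\tan}(t)-K_{\rad}(t)}{\sqrt{|K_{\tan}(t)|}}dt^2$.

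The whole argument is a routine computation; the points that need care are the choice of $\overline{N}$ with the correct causal character in the Lorentzian case, the spherical identity $\langle\om_{ij},\om\rangle=-\langle\om_i,\om_j\rangle$, and---above all---reconciling the two formulas for $\overline{II}(\de_t,\de_t)$ via the differentiated constraint $\kappa f'f''=-h'h''$. One should also note the standing assumptions that make the statement meaningful: $K_{\tan}(t)\neq 0$, so that $\sqrt{|K_{\tan}(t)|}$ may sit in a denominator, and a fixed sign convention for $f'$.
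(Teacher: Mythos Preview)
Your proof is correct and follows essentially the same route as the paper's: both identify the unit normal to the rotational hypersurface, compute the second fundamental form in the coordinate directions $F_t,F_1,\dots,F_n$, and then translate the result into $K_{\tan}$ and $K_{\rad}$ via the relation $\kappa f'^2=1-h'^2$ and its derivative $\kappa f'f''=-h'h''$. The only cosmetic difference is that the paper packages the computation in moving-frame language (the connection forms $\alpha_{i;j}$ with respect to an orthonormal frame), whereas you compute $\langle D_UV,\overline{N}\rangle$ directly; the underlying calculation is identical.
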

\begin{proof}
Consider
\[
E_0=F_t, \ E_i=\left(0,\dfrac{\om_i}{\|\om_i\|}\right), \ E_{n+1}=(-h'(t),\kappa f'(t)\om)
\]
be an orthonormal frame of $\mathbb{L}^{n+2}$ and $\alpha_i,\ i=0,1,\ldots,n+1,$ its dual frame, i.e., linear functionals such that $\alpha_i(E_i)=1$ and $\alpha_i(E_j)=0$ if $i\neq j.$ Since
\[
dF = (f'(t)dt,h'(t)\om dt + h(t) d\om),
\]
we have
\begin{equation}\label{alpha-i}
\begin{split}
\alpha_0 &= \langle dF,E_0 \rangle = \kappa f'(t)^2dt + h'(t)^2 dt = dt,\\
\alpha_i &= \langle dF,E_i\rangle = \dfrac{h(t)}{\|\om_i\|}\langle d\om,\om_i\rangle,\\
\alpha_{n+1} &= \langle dF,E_{n+1}\rangle=0.\\
\end{split}
\end{equation}
Now let us calculate the second structure forms. We have
\[
\begin{split}
dE_0 &= (f''(t)dt,h''(t)\om dt + h'(t)d\om),\\
dE_i &= \left(0,d\left(\dfrac{\om_i}{\|\om_i\|}\right)\right),\\
dE_{n+1}&= (-h''(t)dt, \kappa f''(t)\om dt +\kappa f'(t)d\om).\\
\end{split}
\]
This implies
\[
\begin{split}
\alpha_{0;i} &= \langle dE_0,E_i\rangle = \dfrac{h'(t)}{\|\om_i\|}\langle d\om,\om_i\rangle = \dfrac{h'(t)}{h(t)}\alpha_i,\\
\alpha_{0;n+1}&= \langle dE_0,E_{n+1}\rangle = \kappa(h''(t)f'(t) - f''(t)h'(t))dt \\
			  &= \kappa(h''(t)f'(t) - f''(t)h'(t))\alpha_0,\\
\alpha_{n+1;i} &= \langle dE_{n+1},E_i \rangle = \dfrac{\kappa f'(t)}{\|\om_i\|}\langle d\om,\om_i\rangle = \dfrac{\kappa f'(t)}{h(t)}\alpha_i.  
\end{split}
\]
On the other hand,
\[
\alpha_{n+1;0} = \sum_{p=0}^n h_{0p}\alpha_p \ \mbox{and} \ \alpha_{n+1;i} = \sum_{p=0}^n h_{ip}\alpha_p.
\]
This implies
\[
\begin{split}
h_{00}=\kappa(f''(t)h'(t) - f'(t)h''(t)),& \ h_{0p}=0, \ p=1,\ldots,n,\\
\ h_{ii}=\kappa\dfrac{f'(t)}{h(t)}, \ i=1,\ldots,n, &\ h_{ip}=0, \ p\neq i.\\
\end{split}
\]
Thus
\begin{equation}\label{II-part-1}
\overline{II} = \sum_{i,j=0}^n h_{ij}\alpha_i\alpha_j = \kappa(f''(t)h'(t) - f'(t)h''(t))dt^2 + \kappa\dfrac{f'(t)}{h(t)}\sum_{j=1}^n\alpha_j^2.
\end{equation}
Since $d\om=\sum_{j=1}^n \om_jd\vp_j$ and by using (\ref{alpha-i}), we have 
\[
\alpha_i = \frac{h(t)}{\|\om_i\|}\sum_{i=1}^n\lan\om_j,\om_i\ran d\vp_j = h(t)\|\om_i\|d\vp_i.
\]
This implies
\begin{equation}\label{metric-alpha}
\begin{split}
\lan\cdot,\cdot\ran&=dt^2 + h(t)^2d\om^2 = dt^2 + h(t)^2\sum_{i=1}^n\|\om_i\|^2d\vp_i^2\\
                   &= dt^2+\sum_{i=1}^n\alpha_i^2.\\
\end{split}
\end{equation}
Replacing (\ref{metric-alpha}) in (\ref{II-part-1}), we obtain
\begin{equation}\label{II-part-2}
\begin{split}
\overline{II} &=\kappa(f''(t)h'(t) - f'(t)h''(t))dt^2 + \kappa\dfrac{f'(t)}{h(t)}(\langle\cdot,\cdot\rangle - dt^2)\\
&= \kappa\dfrac{f'(t)}{h(t)}\langle\cdot,\cdot\rangle + \kappa\left(f''(t)h'(t) - f'(t)h''(t) - \dfrac{f'(t)}{h(t)}\right)dt^2.\\
\end{split}
\end{equation}
Note that 
\[
\kappa f'(t)^2 = 1-h'(t)^2 = K_{\tan}(t)h(t)^2
\]
and, by taking derivatives, 
\[
\kappa f'(t)f''(t) = -h'(t)h''(t).
\]
This implies
\[
\begin{split}
f'(t)[f''(t)h'(t) - f'(t)h''(t)]&=f'(t)f''(t)h'(t) - f'(t)^2h''(t)\\
                         &=-\kappa h''(t)h'(t)^2 - \kappa(1-h'(t)^2)h''(t)\\
                         &=-\kappa h''(t)\\
                         &=\kappa K_{\rad}(t)h(t).\\
\end{split}
\]
Thus, replacing the last equation in (\ref{II-part-2}), we obtain
\[
\dfrac{f'(t)}{h(t)}\overline{II} = \kappa \left(\dfrac{f'(t)}{h(t)}\right)^2\langle\cdot,\cdot\rangle + \kappa\left(\kappa K_{\rad}(t) - \left(\dfrac{f'(t)}{h(t)}\right)^2\right)dt^2. 
\]
Since $f'(t)^2 = \kappa K_{\tan}(t)h(t)^2 = |K_{\tan}(t)|h(t)^2,$ we have 
\[
\sqrt{|K_{\tan}(t)|}\overline{II} = K_{\tan}(t)\langle\cdot,\cdot\rangle - (K_{\tan}(t) - K_{\rad}(t))dt^2,
\]
i.e.,
\[
\overline{II} = \kappa \sqrt{|K_{\tan}(t)|}\langle\cdot,\cdot\rangle - \dfrac{K_{\tan}(t) - K_{\rad}(t)}{\sqrt{|K_{\tan}(t)|}}dt^2.
\]
 
\end{proof}

The main part of the proof of Theorems \ref{theo-warped-1} and \ref{theo-warped-2} is the following two propositions, which have their own interest.

\begin{proposition}\label{prop-epsilon}
Let $\Sigma$ be a compact, without boundary, stable, constant mean curvature $H\neq0$ surface of a three-dimensional Riemannian manifold $M^3$ and let $\overline{II}=a\langle\cdot,\cdot\rangle + \ve a\langle\cdot,\xi\rangle\langle\cdot,\xi\rangle$ be the second fundamental form of $M^3$ in $\R^4,$ where $\xi$ is a unitary vector field of $TM$ and $a,\ve:M\rightarrow\R$ are smooth functions. If 
\[
-1\leq\ve\leq 1+\sqrt{5},
\]
then $\genus(\Sigma)=0.$
\end{proposition}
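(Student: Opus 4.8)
The argument I have in mind is by contradiction, following the strategy of Section~3. Suppose $\genus(\Sigma)\geq 1$; since $\dim H^1(\Sigma,\R)=2\genus(\Sigma)\geq 2$, the surface $\Sigma$ carries a nontrivial harmonic vector field $X$, and $JX$ is a second one, orthogonal to it. Taking $n=4$, the functions $u_i=\langle X,E_i\rangle$ and $u_i^{*}=\langle JX,E_i\rangle$, $i=1,\dots,4$, all have zero integral by (\ref{div-0}), so stability yields $\int_\Sigma Q(u_i,u_i)\,d\Sigma\leq 0$ and $\int_\Sigma Q(u_i^{*},u_i^{*})\,d\Sigma\leq 0$; summing over $i$,
\[
\int_\Sigma\big[\,Q(X,X)+Q(JX,JX)\,\big]\,d\Sigma\leq 0 .
\]
The plan is to prove that the integrand here is in fact pointwise $\geq 4H^2\|X\|^2$. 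Since $H\neq 0$ and $X\not\equiv 0$, this makes the left-hand side strictly positive, the desired contradiction; hence $\genus(\Sigma)=0$.

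To obtain that pointwise bound I would feed the given form of $\overline{II}$ into the identity of Proposition~\ref{main-1}, which for a single normal $\overline{N}^4=\overline{N}$ reads
\[
Q(X,X)+Q(JX,JX)=(4H^2+6\scal_M)\|X\|^2-\sum_{i=1}^2\big[\overline{II}(e_i,X)^2+\overline{II}(e_i,JX)^2\big].
\]
Write $\mu=\langle\xi,N\rangle$ and $\be=1-\mu^2=\|\xi^{\top}\|^2\in[0,1]$, with $\xi^{\top}$ the component of $\xi$ tangent to $\Sigma$. Expanding $\overline{II}(e_i,X)=a\langle e_i,X\rangle+\ve a\langle e_i,\xi\rangle\langle X,\xi\rangle$, squaring, summing over $i$, and using that $X,JX$ is an orthogonal frame of $T\Sigma$ with $\|JX\|=\|X\|$, so that $\langle X,\xi\rangle^2+\langle JX,\xi\rangle^2=\be\|X\|^2$, one gets
\[
\sum_{i=1}^2\big[\overline{II}(e_i,X)^2+\overline{II}(e_i,JX)^2\big]=a^2\|X\|^2\big(2+2\ve\be+\ve^2\be^2\big).
\]
For the scalar curvature, the shape operator of $M^3$ in $\R^4$ is $\overline{A}=a\,\mathrm{Id}+\ve a\,\xi\otimes\xi$, with eigenvalue $a(1+\ve)$ along $\xi$ and $a$ with multiplicity $2$; by the Gauss equation the sum of the sectional curvatures of $M^3$ over the three coordinate $2$-planes of an orthonormal eigenbasis of $T_pM$ is $2a^2(1+\ve)+a^2=a^2(3+2\ve)$, which equals $3\scal_M$ with the normalization used in Proposition~\ref{main-1}; so $6\scal_M=2a^2(3+2\ve)$. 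Substituting both computations leads to
\[
Q(X,X)+Q(JX,JX)=\|X\|^2\Big[\,4H^2+a^2\big(4+4\ve-2\ve\be-\ve^2\be^2\big)\Big].
\]

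It then remains to verify the elementary inequality $g(\ve,\be):=4+4\ve-2\ve\be-\ve^2\be^2\geq 0$ for all $\be\in[0,1]$ and all $\ve\in[-1,1+\sqrt5]$; this is precisely where the hypothesis on $\ve$ is used, and it determines the constant $1+\sqrt5$. Since $\partial_\be g=-2\ve(1+\ve\be)$ and $1+\ve\be\geq 0$ on this range (here $\ve\geq -1$ is needed), $g$ is monotone in $\be$ with the sign of $-\ve$: for $\ve\geq 0$ its minimum over $\be\in[0,1]$ is at $\be=1$, where $g=5-(\ve-1)^2$, which is $\geq 0$ exactly because $\ve\leq 1+\sqrt5$; and for $\ve\leq 0$ the minimum is at $\be=0$, where $g=4(1+\ve)\geq 0$ exactly because $\ve\geq -1$. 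Hence $g\geq 0$ throughout, so $Q(X,X)+Q(JX,JX)\geq 4H^2\|X\|^2\geq 0$ pointwise, with strict inequality wherever $X\neq 0$. Since $X\not\equiv 0$, $\int_\Sigma 4H^2\|X\|^2\,d\Sigma>0$, contradicting the displayed stability estimate, and therefore $\genus(\Sigma)=0$.

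I expect the only real difficulty to be computational bookkeeping: the quadratic-in-$\ve$ coefficient of $a^2\|X\|^2$ in the $\overline{II}$-sum and the value of $\scal_M$ must come out exactly as above, because the final inequality is sharp — the case $\be=1$ forces the upper bound $1+\sqrt5$ and the case $\be=0$ the lower bound $-1$. A secondary point to handle with care is that the identity of Proposition~\ref{main-1}, although written in a frame geodesic at $p$, is genuinely pointwise-geometric, so it may legitimately be integrated over $\Sigma$; and the zero set of the harmonic field $X$ plays no role, since all that is needed is $\int_\Sigma\|X\|^2\,d\Sigma>0$.
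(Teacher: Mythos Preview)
Your proof is correct and follows the same route as the paper: assume $\genus(\Sigma)\geq 1$, use the resulting harmonic pair $X,JX$ as test functions, plug the hypothesis on $\overline{II}$ into Proposition~\ref{main-1}, and reduce everything to the nonnegativity of $4+4\ve-2\ve\be-\ve^2\be^2$ on $[0,1]$. The only notable difference is in how that last inequality is checked: the paper locates the roots of the quadratic in $\be$ case by case, whereas your monotonicity argument via $\partial_\be g=-2\ve(1+\ve\be)$ is shorter and makes the appearance of the endpoints $\ve=-1$ and $\ve=1+\sqrt5$ more transparent; your endgame (bounding the integrand below by $4H^2\|X\|^2$ and using $\int_\Sigma\|X\|^2>0$) is also a touch cleaner than the paper's, which drops the factor $\|X\|^2$ in the final display and concludes $H=0$.
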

\begin{proof}

If $\Sigma$ is a compact, without boundary, stable, constant mean curvature $H\neq0$ surface such that $\genus(\Sigma)\geq 1$ then there exist at least two harmonic vector fields $X$ and $JX$ such that
\[
0\leq - \int_\Sigma \left(Q(X,X)+Q(JX,JX)\right) d\Sigma.
\]
If we prove that $Q(X,X)+Q(JX,JX)\geq 0$ under the assumptions, it will be possible to prove that $H=0,$ which is a contradiction. This will give us there is no compact, without boundary, stable, constant mean curvature $H\neq0$ surfaces with $\genus(\Sigma)\geq 1$ in $M^3,$ which implies that $\genus(\Sigma)=0.$

Let $\{e_1,e_2,e_3\}$ be an adapted frame of $M^3$ (i.e., such that $e_1,e_2\in T\Sigma$ and $e_3=N$). Assume also that $\{e_1,e_2\}$ is a geodesic frame at $p\in\Sigma.$ By the Gauss equation, we have, at $p,$
\[
\begin{split}
6\scal_M &= \sum_{i,j=1}^3[\overline{II}(e_i,e_i)\overline{II}(e_j,e_j) - \overline{II}(e_i,e_j)^2]\\
        &= \sum_{i,j=1}^3\left[(a\langle e_i,e_i\rangle + \ve a\langle e_i,\xi\rangle\langle e_i,\xi\rangle)(a\langle e_j,e_j\rangle + \ve a\langle e_j,\xi\rangle\langle e_j,\xi\rangle)\right.\\
        &\left.\qquad - (a\langle e_i,e_j\rangle + \ve a\langle e_i,\xi\rangle\langle e_j,\xi\rangle)^2\right]\\
        &=a^2 \sum_{i,j=1}^3 [\langle e_i,e_i\rangle\langle e_j,e_j\rangle -\langle e_i,e_j\rangle^2]\\
        &\qquad + \ve a^2\sum_{i,j=1}^3[\langle e_i,e_i\rangle \langle e_j,\xi\rangle^2 + \langle e_j,e_j\rangle \langle e_i,\xi\rangle^2 - 2\langle e_i,e_j\rangle \langle e_i,\xi\rangle \langle e_j,\xi\rangle ]\\
        &=6a^2 + \ve a^2[3\sum_{j=1}^3\langle e_j,\xi\rangle^2 + 3\sum_{i=1}^3\langle e_i,\xi\rangle^2 - 2\sum_{i=1}^3\langle e_i,\xi\rangle^2]\\
        &=6a^2 + 4\ve a^2|\xi|^2\\
        &=6a^2+4\ve a^2.
\end{split} 
\]
On the other hand, by using that $\|JX\|=\|X\|$ and $\langle X,JX\rangle=0,$ i.e., $X$ and $JX$ is an orthogonal frame for $\Sigma,$ we have, at $p,$

\[
\begin{split}
\sum_{i=1}^2 \overline{II}(e_i,X)^2 &+ \overline{II}(e_i,JX)^2 = \sum_{i=1}^2 (a\langle e_i,X\rangle + \ve a \langle  e_i,\xi\rangle\langle X,\xi\rangle)^2 \\
&\qquad+ (a\langle e_i,JX\rangle + \ve a \langle  e_i,\xi\rangle\langle JX,\xi\rangle)^2\\
&=a^2\sum_{i=1}^2\left(\langle X,e_i\rangle^2 + 2\ve\langle X,e_i\rangle\langle e_i,\xi\rangle\langle X,\xi\rangle + \ve^2\langle X,\xi\rangle^2\langle \xi,e_i\rangle^2\right)\\
&+a^2\sum_{i=1}^2\left(\langle JX,e_i\rangle^2 + 2\ve\langle JX,e_i\rangle\langle e_i,\xi\rangle\langle JX,
\xi\rangle + \ve^2\langle JX,\xi\rangle^2\langle \xi,e_i\rangle^2\right)\\
&=a^2(\|X\|^2 + 2\ve\langle X,\xi\rangle^2 + \ve^2\langle X,\xi\rangle^2(1-\nu^2))\\ 
&\qquad +a^2(\|JX\|^2 + 2\ve\langle JX,\xi\rangle^2 + \ve^2\langle JX,\xi\rangle^2(1-\nu^2))\\
&=a^2 (2 + 2\ve (1-\nu^2) + \ve^2(1-\nu^2)^2)\|X\|^2,
\end{split}
\]
provided
\[
\sum_{i=1}^2 \langle\xi,e_i\rangle^2 = 1-\langle N,\xi\rangle^2 = 1-\nu^2,
\]
and
\[
\langle X,\xi\rangle^2 + \langle JX,\xi\rangle^2 = (1-\langle \xi, N\rangle^2)\|X\|^2=(1-\nu^2)\|X\|^2,
\]
where $\nu = \langle N,\xi\rangle.$ Thus, using Proposition \ref{main-1} we get
\begin{equation}\label{Q-1}
\begin{split}
Q(X,X)+Q(JX,JX) &= (4H^2 + 6a^2 + 4\ve a^2)\|X\|^2\\
                &\qquad - a^2\left(2 + 2\ve (1-\nu^2) + \ve^2(1-\nu^2)^2\right)\|X\|^2\\
                &= \left(4H^2 + a^2\left(4 + 4\ve - 2\ve(1-\nu^2) - \ve^2(1-\nu^2)^2\right)\right)\|X\|^2.
\end{split}
\end{equation}
Since equation (\ref{Q-1}) does not depend on $e_1$ and $e_2,$ we have that it does not depend on $p,$ and thus it holds everywhere in $\Sigma.$ In order to prove that $Q(X,X)+Q(JX,JX)\geq 0,$ we will find the values of $\ve$ such that the expression (\ref{Q-1}) is non-negative for every $H>0$. This means that we need to find some conditions for $\ve$ such that 
\[
4(1 + \ve) - 2\ve(1-\nu^2) - \ve^2(1-\nu^2)^2\geq 0
\]
for all values of $\nu\in[-1,1].$ Let 
\[
p(y)=4(1 +\ve) - 2\ve y - \ve^2 y^2.
\] 
Let us prove that $p(y)\geq 0$ for every $y\in[0,1]$ if $-1\leq \ve \leq 1+\sqrt{5}.$ First note that it holds trivially for $\ve=0.$ Let us analyze the case $\ve>0.$ In this case $p(y)$ has the roots
\[
y_1 = \dfrac{-1-\sqrt{5+4\ve}}{\ve} \ \mbox{and} \ y_2 = \dfrac{\sqrt{5+4\ve}-1}{\ve}. 
\]
Since we want $p(y)\geq0$ for every $y\in[0,1],$ we need $y_1\leq 0$ and $y_2\geq 1.$ Observe that $y_1<0$ for every $\ve>0.$ On the other hand,
\[
y_2\geq 1 \Leftrightarrow \sqrt{5+4\ve}\geq \ve +1 \Leftrightarrow 5+4\ve \geq (\ve+1)^2 \Leftrightarrow \ve^2 - 2\ve - 4\leq 0 \Leftrightarrow \ve \leq 1+ \sqrt{5}.
\]
Thus, if $\ve>0,$ then $p(y)\geq 0$ for every $y\in[0,1]$ if $\ve\leq 1+\sqrt{5}.$ 

On the other hand, if $\ve<0$ we can consider $\ve = -|\ve|.$ In this case, the roots of $p(y)$ are
\[
y_1 = \dfrac{1-\sqrt{5-4|\ve|}}{|\ve|} \ \mbox{and}\ y_2 = \dfrac{1+\sqrt{5-4|\ve|}}{|\ve|}.
\]
Note that $|\ve|\leq 5/4,$ i.e., $\ve\geq -5/4$ is the first restriction for $\ve<0.$ We have
\[
y_1\leq0 \Leftrightarrow 1-\sqrt{5-4|\ve|}\leq 0 \Leftrightarrow \sqrt{5-4|\ve|}\geq 1 \Leftrightarrow 5-4|\ve|\geq 1 \Leftrightarrow |\ve|\leq 1,
\]
i.e., $y_1\leq 0$ for $\ve\geq-1.$ On the other hand,
\[
y_2\geq 1 \Leftrightarrow \sqrt{5-4|\ve|}+1 \geq |\ve| \Leftrightarrow \sqrt{5-4|\ve|}\geq |\ve|-1 
\]
and this is true since $|\ve|\leq 1$ implies $\sqrt{5-4|\ve|}\geq 0 \geq |\ve|-1.$ Thus, if $\ve<0,$ then $p(y)\geq 0$ for all $y\in[0,1]$ if $|\ve|\leq 1,$ i.e., $\ve\geq -1.$ The combination of both cases gives that $p(y)\geq 0$ for $\ve\in[-1,1+\sqrt{5}].$ Thus,
\[
0\leq -\int_\Sigma \left(Q(X,X) + Q(JX,JX)\right) d\Sigma = -\int_\Sigma (4H^2 + p(y)) d\Sigma \leq 0,
\]
which implies that $4H^2 + p(y)\equiv 0$ i.e., $H=0,$ which is a contradiction. 
 
\end{proof}

If $\ve\not\in[-1,1+\sqrt{5}],$ we can determine the values of $H>0$ such that compact, without boundary, constant mean curvature $H\neq0$ surfaces have genus zero for given $a$ and $\ve.$

\begin{proposition}\label{prop-epsilon-2}
Let $\Sigma$ be a compact, without boundary, stable, constant mean curvature $H\neq0$ surface of a three-dimensional Riemannian manifold $M^3$ and let $\overline{II}=a\langle\cdot,\cdot\rangle + \ve a\langle\cdot,\xi\rangle\langle\cdot,\xi\rangle$ be the second fundamental form of $M^3$ in $\R^4,$ where $\xi$ is a unitary vector field of $TM$ and $a,\ve:M\rightarrow\R,$ $a\neq0,$ are smooth functions. If one of the following conditions is satisfied
\begin{itemize}
\item[i)] $\ve>1+\sqrt{5}$ and $H^2 > \sup_\Sigma\dfrac{a^2(\ve^2 - 2\ve - 4)}{4};$

\item[ii)] $-2\leq \ve<-1$ and $H^2> \sup_\Sigma a^2(|\ve|-1);$

\item[iii)] $\ve<-2$ and $H^2> \sup_\Sigma a^2\left\{\frac{1}{4}|\ve|^2 + \frac{1}{2}|\ve|-1\right\},$ 
\end{itemize}
then $\genus(\Sigma)=0.$
\end{proposition}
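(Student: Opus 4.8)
The plan is to run the same contradiction argument as in the proof of Proposition \ref{prop-epsilon}, this time keeping track of the sign of the relevant quadratic. Suppose $\genus(\Sigma)\geq 1$. Since $\dim H^{1}(\Sigma,\R)=2\genus(\Sigma)\geq 2$, the surface $\Sigma$ carries a nonzero harmonic vector field $X$, and $JX$ is a second harmonic vector field. With $u_i=\langle X,E_i\rangle$ and $u_i^{*}=\langle JX,E_i\rangle$, each of these functions has zero mean by (\ref{div-0}), so stability of $\Sigma$ yields
\[
0\leq \sum_{i=1}^{n} J''(0)(u_i)+\sum_{i=1}^{n} J''(0)(u_i^{*})=-\int_{\Sigma}\bigl(Q(X,X)+Q(JX,JX)\bigr)\,d\Sigma.
\]

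First I would invoke the computation already carried out inside the proof of Proposition \ref{prop-epsilon}, which used only Proposition \ref{main-1}, the Gauss equation, and the special form $\overline{II}=a\langle\cdot,\cdot\rangle+\ve a\langle\cdot,\xi\rangle\langle\cdot,\xi\rangle$. It gives, at every point of $\Sigma$ (the expression being independent of the frame, as noted just after (\ref{Q-1})),
\[
Q(X,X)+Q(JX,JX)=\bigl(4H^{2}+a^{2}\,p(1-\nu^{2})\bigr)\|X\|^{2},\qquad p(y):=4(1+\ve)-2\ve y-\ve^{2}y^{2},
\]
where $\nu=\langle N,\xi\rangle$ and hence $1-\nu^{2}\in[0,1]$. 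Setting $M(\ve):=\max_{y\in[0,1]}\bigl(-p(y)\bigr)$, it is therefore enough to show that each of the three hypotheses implies $4H^{2}>a^{2}M(\ve)$ pointwise on $\Sigma$: then $Q(X,X)+Q(JX,JX)\geq 0$ everywhere, the displayed integral inequality forces $\int_{\Sigma}\bigl(4H^{2}+a^{2}p(1-\nu^{2})\bigr)\|X\|^{2}\,d\Sigma=0$ with a strictly positive coefficient, so $X\equiv 0$, contradicting $\genus(\Sigma)\geq 1$; hence $\genus(\Sigma)=0$.

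The remaining step is the elementary maximization of $-p(y)=\ve^{2}y^{2}+2\ve y-4(1+\ve)$ over $[0,1]$, carried out separately in the three regimes. If $\ve>1+\sqrt{5}$, then $\ve>0$ and $-p$ is increasing on $[0,\infty)$, so $M(\ve)=-p(1)=\ve^{2}-2\ve-4>0$; dividing the inequality $4H^{2}>a^{2}M(\ve)$ by $4$ and taking $\sup_{\Sigma}$ gives (i). If $\ve<0$, write $\ve=-|\ve|$; then $-p(y)=|\ve|^{2}y^{2}-2|\ve|y+4(|\ve|-1)$ has positive leading coefficient and vertex $y=1/|\ve|$, which in cases (ii) and (iii) lies in $(0,1)$ because $|\ve|>1$, so the maximum over $[0,1]$ is attained at an endpoint. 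Since $-p(0)=4(|\ve|-1)$, $-p(1)=|\ve|^{2}+2|\ve|-4$, and $-p(1)-(-p(0))=|\ve|(|\ve|-2)$, one gets $M(\ve)=-p(0)=4(|\ve|-1)$ when $1<|\ve|\leq 2$, that is $-2\leq\ve<-1$, which gives (ii), and $M(\ve)=-p(1)=|\ve|^{2}+2|\ve|-4$ when $|\ve|>2$, that is $\ve<-2$, which gives (iii) after dividing by $4$. I do not anticipate a real obstacle: all the analytic content is already contained in Propositions \ref{main-1} and \ref{prop-epsilon}, and the only delicate point is that the location of the vertex $1/|\ve|$ in $[0,1]$, together with the comparison of the two endpoint values of $-p$, is exactly what makes $\ve=-2$ the threshold separating cases (ii) and (iii).
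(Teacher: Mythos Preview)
Your argument is correct and follows essentially the same route as the paper: both reduce to the identity $Q(X,X)+Q(JX,JX)=\bigl(4H^{2}+a^{2}p(1-\nu^{2})\bigr)\|X\|^{2}$ from equation (\ref{Q-1}) and then determine, for each regime of $\ve$, the condition on $H^{2}$ that forces this coefficient to be strictly positive on $[0,1]$. The only cosmetic difference is that the paper absorbs $H$ into the quadratic via $H_a=H/a$ and locates the roots of $p_a(y)=4(H_a^{2}+1+\ve)-2\ve y-\ve^{2}y^{2}$, whereas you keep $H$ outside and directly compute $M(\ve)=\max_{[0,1]}(-p)$; both analyses give the same thresholds, and your endpoint comparison $-p(1)-(-p(0))=|\ve|(|\ve|-2)$ is exactly the computation that separates cases (ii) and (iii).
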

\begin{proof}
In order to simplify the analysis, denote by $H_a=H/a.$ This implies by (\ref{Q-1}) that
\begin{equation}\label{Q-2}
Q(X,X) + Q(JX,JX) = a^2\left[4(H_a^2 + 1 + \ve) - 2\ve(1-\nu^2) - \ve^2(1-\nu^2)^2\right]\|X\|^2.
\end{equation}
Following the same idea of the previous proposition, we will find conditions such that (\ref{Q-2}) is non-negative. Let $p_a:[0,1]\ria\R$ be defined by
\[
p_a(y) = 4(H_a^2 +1 +\ve) - 2\ve y - \ve^2y^2.
\] 
If $\ve>0,$ then $p_a(y)$ has the roots
\[
y_1 = \dfrac{-1-\sqrt{5+4\ve + 4H_a^2}}{\ve} \ \mbox{and} \ y_2=\dfrac{-1+\sqrt{5+4\ve +4H_a^2}}{\ve}.
\]
Notice that $y_1<0$ for every $\ve>0$ and for every $H_a.$ On the other hand 
\[
y_2 > 1 \Leftrightarrow H_a^2 \geq \dfrac{\ve^2 - 2\ve - 4}{4}.
\]
Thus, if $\ve>0,$ then (\ref{Q-2}) is positive for
\[
H^2 > \dfrac{a^2(\ve^2 - 2\ve - 4)}{4}.
\]
If $\ve< 0,$ the roots of $p_a(y)$ are
\[
y_1= \dfrac{1- \sqrt{5-4|\ve|+4H_a^2}}{|\ve|} \ \mbox{and} \ y_2 = \dfrac{1+\sqrt{5-4|\ve|+4H_a^2}}{|\ve|}.
\]
First note that we need $H_a^2\geq |\ve| - \frac{5}{4}.$ This implies
\[
y_1 < 0 \Leftrightarrow H_a^2 > |\ve|-1 \ \mbox{and} \ y_2 > 1 \Leftrightarrow H_a^2 > \dfrac{|\ve|^2 +2|\ve| -4}{4}.
\]
Thus, if $\ve<0$ then (\ref{Q-2}) is positive for 
\[
H^2 > a^2 \max\left\{|\ve|-1,\dfrac{|\ve|^2 + 2|\ve|-4}{4}\right\}=
\left\{\begin{array}{lr}
a^2(|\ve|-1)&\mbox{if} \ |\ve|\leq 2;\\
a^2\left(\dfrac{|\ve|^2 + 2|\ve|-4}{4}\right) &\mbox{if} \ |\ve|\geq 2.\\
\end{array}\right.
\]
This implies, under the hypothesis, that $Q(X,X)+Q(JX,JX)> 0.$ Analogously to the proof of Proposition \ref{prop-epsilon}, if $\Sigma$ is a compact, without boundary, stable, nonzero constant mean curvature surface with $\genus(\Sigma)\geq 1,$ then there exists harmonic vector fields $X$ and $JX$ such that
\[
0\leq - \int_\Sigma \left(Q(X,X) + Q(JX,JX)\right)d\Sigma = -\int_\Sigma p_a(1-\nu^2)d\Sigma < 0,
\]
which gives a contradiction. Therefore $\genus(\Sigma)=0.$
 
\end{proof}

If we take $\ve=\delta a^{-2},$ where $\delta:M\rightarrow\R$ is a smooth function, we can rewrite Propositions \ref{prop-epsilon} and \ref{prop-epsilon-2} as 
\begin{corollary}\label{cor-delta} 
Let $\Sigma$ be a compact, without boundary, stable, constant mean curvature $H\neq0$ surface of a three-dimensional Riemannian manifold $M^3$ and let $\overline{II}=a\langle\cdot,\cdot\rangle + \delta a^{-1}\langle\cdot,\xi\rangle\langle\cdot,\xi\rangle$ be the second fundamental form of $M^3$ in $\R^4,$ where $\xi$ is a unitary vector field of $TM$ and $a,\delta:M\rightarrow\R,$ $a\neq0,$ are smooth functions. If one of the following conditions is satisfied 
\begin{itemize}
\item[i)] $-a^2\leq \delta \leq a^2(1+\sqrt{5});$

\item[ii)] $\delta\geq a^2(1+\sqrt{5})$ and $H^2>\sup_\Sigma\left\{\frac{1}{4}a^{-2}\delta^2 - \frac{1}{2}\delta  - a^2\right\};$

\item[iii)] $-2a^2\leq\delta\leq -a^2$ and $H^2 > \sup_\Sigma \{|\delta| - a^2\};$

\item[iv)] $\delta\leq -2a^2$ and $H^2>\sup_\Sigma\left\{\frac{1}{4}|\delta|^2a^{-2} + \frac{1}{2}|\delta|-a^2\right\},$
\end{itemize}
then $\genus(\Sigma)=0.$
\end{corollary}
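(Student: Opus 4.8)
The plan is to obtain Corollary \ref{cor-delta} as a direct reformulation of Propositions \ref{prop-epsilon} and \ref{prop-epsilon-2} under the substitution indicated before the statement. Since $a\neq0$ on $M,$ the function $\ve:=\delta a^{-2}$ is well defined and smooth, and $\delta a^{-1}=\ve a;$ hence the hypothesis $\overline{II}=a\langle\cdot,\cdot\rangle+\delta a^{-1}\langle\cdot,\xi\rangle\langle\cdot,\xi\rangle$ is literally the hypothesis $\overline{II}=a\langle\cdot,\cdot\rangle+\ve a\langle\cdot,\xi\rangle\langle\cdot,\xi\rangle$ appearing in those two propositions. It then remains only to match, pointwise on $M,$ each of the four conditions i)--iv) with one of the hypotheses of Proposition \ref{prop-epsilon} or \ref{prop-epsilon-2}.

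For item i), dividing $-a^2\leq\delta\leq a^2(1+\sqrt{5})$ by $a^2>0$ yields $-1\leq\ve\leq1+\sqrt{5}$ everywhere, which is exactly the hypothesis of Proposition \ref{prop-epsilon}, so $\genus(\Sigma)=0.$ For items ii)--iv), I would rewrite the bounds on $H^2$ using $\ve=\delta a^{-2}$ and $|\ve|=|\delta|a^{-2}:$
\[
\frac{a^2(\ve^2-2\ve-4)}{4}=\frac{1}{4}a^{-2}\delta^2-\frac{1}{2}\delta-a^2,\qquad a^2(|\ve|-1)=|\delta|-a^2,
\]
\[
a^2\left(\frac{1}{4}|\ve|^2+\frac{1}{2}|\ve|-1\right)=\frac{1}{4}|\delta|^2a^{-2}+\frac{1}{2}|\delta|-a^2,
\]
while the sign conditions $\ve>1+\sqrt{5},$ $-2\leq\ve<-1$ and $\ve<-2$ correspond, again using $a^2>0,$ to $\delta\geq a^2(1+\sqrt{5}),$ $-2a^2\leq\delta\leq-a^2$ and $\delta\leq-2a^2.$ Thus conditions ii), iii) and iv) become, after substitution, precisely conditions i), ii) and iii) of Proposition \ref{prop-epsilon-2}, and in each case that proposition gives $\genus(\Sigma)=0.$

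Since this is a purely formal translation, I do not expect a genuine obstacle. The only points that deserve a line of care are that the inequalities are preserved upon dividing by $a^2$ because $a\neq0$ forces $a^2>0$ pointwise, that the suprema over $\Sigma$ in the two formulations agree term by term under the substitution, and that the boundary case $\delta=-a^2$ (i.e. $\ve=-1$), which lies on the overlap of i) and iii), is already covered by Proposition \ref{prop-epsilon}; none of these requires a new idea.
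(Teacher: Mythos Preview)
Your proposal is correct and follows exactly the paper's approach: the paper simply states that taking $\ve=\delta a^{-2}$ rewrites Propositions \ref{prop-epsilon} and \ref{prop-epsilon-2} as Corollary \ref{cor-delta}, and you have spelled out that substitution in full, including the verification that the three $H^2$-bounds transform as claimed. Your remark on the boundary cases (e.g.\ $\ve=-1$) is a harmless extra check that the paper does not bother to mention.
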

 
\begin{proof}[Proof of Theorem \ref{theo-warped-1}.] Applying Corollary \ref{cor-delta} (i) to the warped product manifold with the second fundamental form of Proposition \ref{II-warped}, p. \pageref{II-warped}, by considering $\kappa=1,$ $a(t) =\sqrt{|K_{\tan}(t)|}$ and $\delta(t)=K_{\rad}(t) - K_{\tan}(t)$ we prove that $\genus(\Sigma)=0.$ 

In order to prove that $\Sigma$ is embedded, we will prove that the hypothesis of Theorem \ref{theo-warped-1} implies the hypothesis of Theorem \ref{Stab-Main}, p. \pageref{Stab-Main}. If $K_{\rad}(t)\geq K_{\tan}(t)>0,$ then $\Sigma$ is embedded for all $H>0$ by using Theorem \ref{Stab-Main} (ii). If $K_{\tan}(t)\geq K_{\rad}(t)\geq 0$ then we can apply Theorem \ref{Stab-Main} (i) to conclude that $\Sigma$ is embedded for every $H>0.$
 
\end{proof}

\begin{proof}[Proof of Theorem \ref{theo-warped-2}.]

Applying Corollary \ref{cor-delta} (ii), (iii) and (iv), to the warped product manifold with the second fundamental form of Proposition \ref{II-warped}, p. \pageref{II-warped}, by considering $\kappa=1,$ $a(t) =\sqrt{|K_{\tan}(t)|}$ and $\delta(t)=K_{\rad}(t) - K_{\tan}(t)$ we prove that $\genus(\Sigma)=0$ for the following values of $c_0=c_0(M)$ (see Fig. \ref{fig-3}):

\begin{itemize}
\item[(a)] If $0<(2+\sqrt{5})K_{\tan}(t)\leq K_{\rad}(t),$ then 
\[
c_0(M)=\dfrac{1}{4}\sup_{t\in I}\left\{K_{\tan}(t)\left[\left(\dfrac{K_{\rad}(t)}{K_{\tan}(t)}\right)^2 - 4\left(\dfrac{K_{\rad}(t)}{K_{\tan}(t)}\right) -1\right]\right\};
\]

\item[(b)] If $0<-K_{\rad}(t)\leq K_{\tan}(t),$ then
\[
c_0(M)=\sup_{t\in I}\{- K_{\rad}(t)\};
\]

\item[(c)] If $-K_{\rad}(t)\geq K_{\tan}(t)>0,$ then
\[
c_0(M)=\dfrac{1}{4}\sup_{t\in I}\left\{K_{\tan}(t)\left[\left(\dfrac{-K_{\rad}(t)}{K_{\tan}(t)}\right)^2 + 4\left(\dfrac{-K_{\rad}(t)}{K_{\tan}(t)}\right)-1\right]\right\}.
\]
\end{itemize}

In order to prove that $\Sigma$ is embedded, we will prove that the hypothesis of Theorem \ref{theo-warped-2}, with the values of $c_0(M)$ stated in the items (a) to (c) above, implies the hypothesis of Theorem \ref{Stab-Main}, p. \pageref{Stab-Main}. We prove each case separately:

\begin{itemize}
\item[(a)] If $K_{\rad}(t)\geq (2+\sqrt{5})K_{\tan}(t)>K_{\tan}(t)>0$ then, by using Theorem \ref{Stab-Main} (ii) we can see that $\Sigma$ is embedded for every $H>0.$

\item[(b)] The value of $c_0(M)$ is identical to the hypothesis of Theorem \ref{Stab-Main} (i).

\item[(c)] Since $K_{\tan}(t)>0> K_{\rad}(t),$  and 
\[
\begin{split}
\dfrac{1}{4}K_{\tan}(t)\left(\left(\dfrac{-K_{\rad}(t)}{K_{\tan}(t)}\right)^2 +4\left(\dfrac{-K_{\rad}(t)}{K_{\tan}(t)}\right)-1\right) &\geq K_{\tan}(t)\left(\dfrac{-K_{\rad}(t)}{K_{\tan}(t)}\right)\\
& = -K_{\rad}(t)
\end{split}
\]
for $-K_{\rad}(t)\geq K_{\tan}(t)$ (indeed, $(1/4)(x^2+4x-1)>x$ for $x>1$), we can see that $c_0(M)\geq \sup_\Sigma\{-K_{\rad}(t)\}$ and conclude by Theorem \ref{Stab-Main} (i) that $\Sigma$ is embedded.
\end{itemize}
 
\end{proof}

\begin{figure}[h]
\includegraphics[scale=0.115]{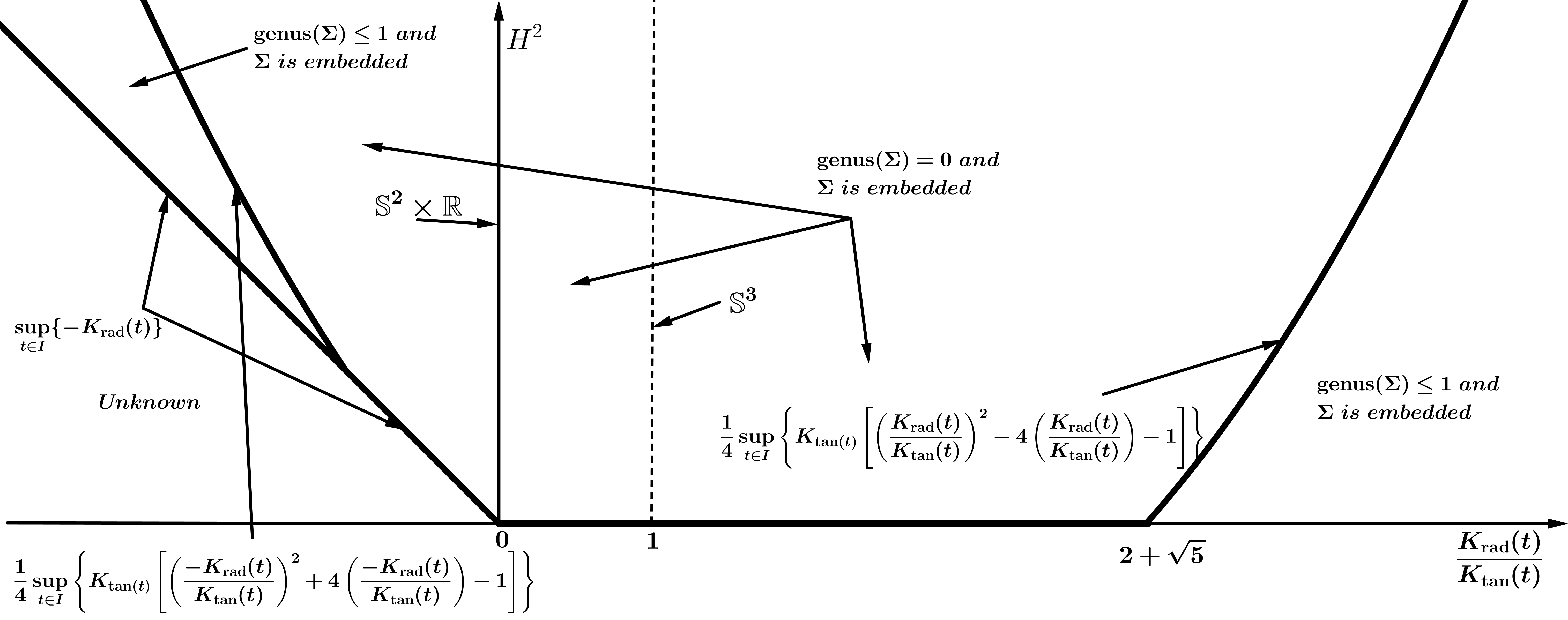}
\caption{Representation of the results in Theorems \ref{Stab-Main}, \ref{theo-warped-1}, and \ref{theo-warped-2}.}
\label{fig-3}
\end{figure}

To conclude this section we give some examples of Riemannian three-dimensional manifolds which satisfies the hypothesis of Theorems \ref{theo-warped-1} and \ref{theo-warped-2}. These examples show that the class of manifolds such that these theorems hold is as large as possible under the assumption that $K_{\tan}(t)>0.$

\begin{example}\label{example}
{\normalfont 
Let $F:I_0\times\R^2\rightarrow\R^4,$ where $I_0\subset\R$ is an interval, be given by 
\[
F(s,\theta)=(s,u(s)\om(\theta))
\] 
be the parametrization of a rotationally symmetric hypersurface of $\R^4,$ where the profile curve is the graphic of the smooth function $u:I_0\rightarrow\R$ and $\om:\R^2\rightarrow\R^3$ is the canonical parametrization of the unit round sphere $\s^2$ in polar coordinates. The three-dimensional Riemannian manifold $M^3=F(I_0\times\R^3)$ has the metric
\[
\langle\cdot,\cdot\rangle = (1+ u'(s)^2)ds^2 + u(s)^2d\om^2.
\]
Define $G:I_0\ria I\subset \R$ by
\[
G'(s)=\sqrt{1+u'(s)^2},\ \mbox{and}\ G(s_0)=t_0,
\]
where $s_0\in I_0$ and $t_0\in I.$ Since $G'(s)>1,$ $G(s)$ is invertible. Let $t=G(s)$ and define $h:I\ria \R$ by 
\[
h(t)=u(G^{-1}(t)).
\] 
With this change of variables, we have
\[
dt=G'(s)ds = \sqrt{1+u'(s)^2}ds \ \mbox{and}\ h(t)=u(s).
\]
Thus $M^3$ can bee seen as a warped product manifold with metric
\[
\lan\cdot,\cdot\ran = dt^2 + h(t)^2d\omega^2.
\]
Since
\[
h'(t)=u'(s)\frac{ds}{dt} = \frac{u'(s)}{\sqrt{1+u'(s)^2}}
\]
and
\[
h''(t)=\frac{d}{ds}\left(\frac{u'(s)}{\sqrt{1+u'(s)^2}}\right)\frac{ds}{dt} = \frac{u''(s)}{(1+u'(s)^2)^2},
\]
the sectional curvatures of these manifolds, in terms of $s,$ are
\[
K_{\tan}(t)=\frac{1-h'(t)^2}{h(t)^2}=\dfrac{1}{u(s)^2(1+u'(s)^2)}
\]
and
\[
K_{\rad}(t)=-\frac{h''(t)}{h(t)} = - \dfrac{u''(s)}{u(s)(1+u'(s)^2)^2}.
\]
For these manifolds, $K_{\tan}(t)>0$ everywhere and the sign of $K_{\rad}(t)$ depends on the sign of $u''(s).$ These manifolds satisfy the hypothesis of Theorems \ref{theo-warped-1} and \ref{theo-warped-2} for every positive smooth function $u:I\rightarrow\R.$ Let see below some particular cases of rotationally symmetric hypersurfaces of $\R^4:$
\begin{itemize}
\item[(i)] The generalized ellipsoids \[E_b^3=\left\{(x,y,z,w)\in\R^4; x^2+y^2+z^2+\frac{w^2}{b^2}=1\right\}, b>0,\] are rotationally symmetric hypersurfaces whose profile curve is the graphic of the function $u:(-b,b)\rightarrow\R$ given by $u(s)=\frac{1}{b}\sqrt{b^2-s^2}.$ The sectional curvatures of these manifolds are
\[
K_{\tan}(t)=\dfrac{b^4}{b^4 + (1-b^2)s^2} \ \mbox{and} \ K_{\rad}(t)=\dfrac{b^6}{(b^4+(1-b^2)s^2)^2}.
\]
Thus, if $b>1,$ then $\dfrac{1}{b^2}\leq \dfrac{K_{\rad}(t)}{K_{\tan}(t)} \leq 1$ and if $0<b<1,$ then $1\leq\dfrac{K_{\rad}(t)}{K_{\tan}(t)}\leq\dfrac{1}{b^2}.$ Therefore the generalized ellipsoids satisfy the hypothesis of Theorem \ref{theo-warped-1} for every $b>\frac{1}{\sqrt{2+\sqrt{5}}}.$ Otherwise, i.e., for $0<b<\frac{1}{\sqrt{2+\sqrt{5}}},$ these manifolds satisfy the hypothesis of Theorem \ref{theo-warped-2}.

\item[(ii)] The generalized hyperboloids \[H_b^3=\left\{(x,y,z,w)\in\R^4; x^2+y^2+z^2-\frac{w^2}{b^2}=1\right\}, b>0,\] are rotationally symmetric hypersurfaces whose profile curve is the graphic of the function $u:\R\rightarrow\R$ given by $u(s)=\frac{1}{b}\sqrt{b^2+s^2}.$ The sectional curvatures of these manifolds are
\[
K_{\tan}(t)=\dfrac{b^4}{b^4 + (1+b^2)s^2} \ \mbox{and} \ K_{\rad}(t)=-\dfrac{b^6}{(b^4+(1+b^2)s^2)^2}.
\]
Thus $-b^2\leq\dfrac{K_{\rad}(t)}{K_{\tan}(t)}<0.$ Therefore, these manifolds satisfy the hypothesis of Theorem \ref{theo-warped-2}.
\end{itemize}
}
\end{example}

\section{Proof of Theorem \ref{theo-stab-general}}

We conclude the paper with the proof of Theorem \ref{theo-stab-general}:

\begin{proof}[Proof of Theorem \ref{theo-stab-general}.]

Since $X=\langle X,e_1\rangle e_1 + \langle X,e_2\rangle e_2$ and analogously for $JX,$ we have
\begin{equation}\label{est-aa}
\begin{split}
\sum_{\alpha=4}^n\sum_{i=1}^2\left[\overline{II}^\alpha(e_i,X)^2\right.&\left. + \overline{II}^\alpha(e_i,JX)^2\right]\\
&=\sum_{\alpha=4}^n\left[\sum_{i,j=1}^2\langle X,e_j\rangle^2\overline{II}^\alpha(e_i,e_j)^2 + \langle JX,e_j\rangle^2\overline{II}^\alpha(e_i,e_j)^2\right]\\ 
&=\sum_{\alpha=4}^n\sum_{i,j=1}^2 (\langle X,e_j\rangle^2 + \langle JX,e_j\rangle^2)\overline{II}^\alpha(e_i,e_j)^2\\
&=\sum_{\alpha=4}^n\sum_{i,j=1}^2\overline{II}^\alpha(e_i,e_j)^2\|X\|^2\leq \sum_{\alpha=4}^n \|\overline{II}^\alpha\|^2\|X\|^2,
\\
\end{split}
\end{equation}
provided $X$ and $JX$ is an orthogonal frame of $\Sigma$ and 
\[
\langle X,e_j\rangle^2 + \langle JX,e_j\rangle^2 = \|e_j\|^2\|X\|^2=\|X\|^2.
\]
Since the mean curvature vector of $M^3$ in $\R^n$ is given by
\[
\mathcal{H}=\frac{1}{3}\sum_{\alpha=4}^n (\tr \overline{II}^\alpha)\overline{N}^\alpha,
\]
we have
\[
\|\mathcal{H}\|^2 = \frac{1}{9}\sum_{\alpha=4}^n(\tr \overline{II}^\alpha)^2.
\]
Thus, by using the Gauss equation
\[
\begin{split}
\sum_{\alpha=4}^n\|\overline{II}^\alpha\|^2&=\sum_{\alpha=4}^n(\tr \overline{II}^\alpha)^2-6\scal_M\\
&=9\|\mathcal{H}\|^2 - 6\scal_M\\
\end{split}
\]
and estimate (\ref{est-aa}) in Proposition \ref{main-1}, p. \pageref{main-1}, we have
\[
\begin{split}
Q(X,X) + Q(JX,JX) \geq (4H^2 + 12\scal_M - 9\|\mathcal{H}\|^2)\|X\|^2.
\end{split}
\]
If $\Sigma$ is a compact, without boundary, stable, constant mean curvature $H\neq0$ surface such that $\genus(\Sigma)\geq 1$ then there exist at least two harmonic vector fields $X$ and $JX$ such that, under the hypothesis,
\[
0\leq - \int_\Sigma \left(Q(X,X)+Q(JX,JX)\right) d\Sigma < 0,
\]
which gives a contradiction. Thus, there is no compact, without boundary, stable, constant mean curvature $H\neq0$ surfaces with $\genus(\Sigma)\geq 1$ in $M^3$ under our hypothesis, which implies that $\genus(\Sigma)=0.$
 
\end{proof}

\begin{bibdiv}
\begin{biblist}

\bib{AR}{article}{
   author={Abresch, Uwe},
   author={Rosenberg, Harold},
   title={A Hopf differential for constant mean curvature surfaces in ${\bf
   S}^2\times{\bf R}$ and ${\bf H}^2\times{\bf R}$},
   journal={Acta Math.},
   volume={193},
   date={2004},
   number={2},
   pages={141--174},
   issn={0001-5962},
   review={\MR{2134864}},
   doi={10.1007/BF02392562},
}

\bib{aledo}{article}{
   author={Aledo, Juan A.},
   author={Rubio, Rafael M.},
   title={Stable minimal surfaces in Riemannian warped products},
   journal={J. Geom. Anal.},
   volume={27},
   date={2017},
   number={1},
   pages={65--78},
   issn={1050-6926},
   review={\MR{3606544}},
   doi={10.1007/s12220-015-9673-8},
}
\bib{AD-1}{article}{
   author={Al\'{i}as, Luis J.},
   author={Dajczer, Marcos},
   title={Uniqueness of constant mean curvature surfaces properly immersed
   in a slab},
   journal={Comment. Math. Helv.},
   volume={81},
   date={2006},
   number={3},
   pages={653--663},
   issn={0010-2571},
   review={\MR{2250858}},
   doi={10.4171/CMH/68},
}

\bib{AD-2}{article}{
   author={Al\'{i}as, Luis J.},
   author={Dajczer, Marcos},
   title={Constant mean curvature hypersurfaces in warped product spaces},
   journal={Proc. Edinb. Math. Soc. (2)},
   volume={50},
   date={2007},
   number={3},
   pages={511--526},
   issn={0013-0915},
   review={\MR{2360513}},
   doi={10.1017/S0013091505001069},
}

\bib{AIR}{article}{
   author={Al\'{i}as, Luis J.},
   author={Impera, Debora},
   author={Rigoli, Marco},
   title={Hypersurfaces of constant higher order mean curvature in warped
   products},
   journal={Trans. Amer. Math. Soc.},
   volume={365},
   date={2013},
   number={2},
   pages={591--621},
   issn={0002-9947},
   review={\MR{2995367}},
   doi={10.1090/S0002-9947-2012-05774-6},
}

\bib{BdC}{article}{
   author={Barbosa, Jo\~{a}o Lucas},
   author={do Carmo, Manfredo},
   title={Stability of hypersurfaces with constant mean curvature},
   journal={Math. Z.},
   volume={185},
   date={1984},
   number={3},
   pages={339--353},
   issn={0025-5874},
   review={\MR{731682}},
   doi={10.1007/BF01215045},
}

\bib{BdCE}{article}{
   author={Barbosa, J. Lucas},
   author={do Carmo, Manfredo},
   author={Eschenburg, Jost},
   title={Stability of hypersurfaces of constant mean curvature in
   Riemannian manifolds},
   journal={Math. Z.},
   volume={197},
   date={1988},
   number={1},
   pages={123--138},
   issn={0025-5874},
   review={\MR{917854}},
   doi={10.1007/BF01161634},
}

\bib{bessa}{article}{
   author={Bessa, G. P.},
   author={Garc\'{i}a-Mart\'{i}nez, S. C.},
   author={Mari, L.},
   author={Ramirez-Ospina, H. F.},
   title={Eigenvalue estimates for submanifolds of warped product spaces},
   journal={Math. Proc. Cambridge Philos. Soc.},
   volume={156},
   date={2014},
   number={1},
   pages={25--42},
   issn={0305-0041},
   review={\MR{3144209}},
   doi={10.1017/S0305004113000443},
}

\bib{BCL}{article}{
   author={Bezerra, K. S.},
   author={Caminha, A.},
   author={Lima, B. P.},
   title={On the stability of minimal cones in warped products},
   journal={Bull. Braz. Math. Soc. (N.S.)},
   volume={45},
   date={2014},
   number={3},
   pages={485--503},
   issn={1678-7544},
   review={\MR{3264802}},
   doi={10.1007/s00574-014-0059-5},
}

\bib{B-M}{article}{
   author={Bray, Hubert},
   author={Morgan, Frank},
   title={An isoperimetric comparison theorem for Schwarzschild space and
   other manifolds},
   journal={Proc. Amer. Math. Soc.},
   volume={130},
   date={2002},
   number={5},
   pages={1467--1472},
   issn={0002-9939},
   review={\MR{1879971}},
   doi={10.1090/S0002-9939-01-06186-X},
}

\bib{B-ON}{article}{
   author={Bishop, R. L.},
   author={O'Neill, B.},
   title={Manifolds of negative curvature},
   journal={Trans. Amer. Math. Soc.},
   volume={145},
   date={1969},
   pages={1--49},
   issn={0002-9947},
   review={\MR{0251664}},
   doi={10.2307/1995057},
}

\bib{Brendle}{article}{
   author={Brendle, Simon},
   title={Constant mean curvature surfaces in warped product manifolds},
   journal={Publ. Math. Inst. Hautes \'{E}tudes Sci.},
   volume={117},
   date={2013},
   pages={247--269},
   issn={0073-8301},
   review={\MR{3090261}},
   doi={10.1007/s10240-012-0047-5},
}


\bib{corvino}{article}{
   author={Corvino, Justin},
   author={Gerek, Aydin},
   author={Greenberg, Michael},
   author={Krummel, Brian},
   title={On isoperimetric surfaces in general relativity},
   journal={Pacific J. Math.},
   volume={231},
   date={2007},
   number={1},
   pages={63--84},
   issn={0030-8730},
   review={\MR{2304622}},
   doi={10.2140/pjm.2007.231.63},
}

\bib{Forster}{book}{
   author={Forster, Otto},
   title={Lectures on Riemann surfaces},
   series={Graduate Texts in Mathematics},
   volume={81},
   note={Translated from the 1977 German original by Bruce Gilligan;
   Reprint of the 1981 English translation},
   publisher={Springer-Verlag, New York},
   date={1991},
   pages={viii+254},
   isbn={0-387-90617-7},
   review={\MR{1185074}},
}

\bib{frensel}{article}{
   author={Frensel, Katia Rosenvald},
   title={Stable complete surfaces with constant mean curvature},
   journal={Bol. Soc. Brasil. Mat. (N.S.)},
   volume={27},
   date={1996},
   number={2},
   pages={129--144},
   issn={0100-3569},
   review={\MR{1418929}},
   doi={10.1007/BF01259356},
}


\bib{GIR}{article}{
   author={Garc\'{i}a-Mart\'{i}nez, Sandra C.},
   author={Impera, Debora},
   author={Rigoli, Marco},
   title={A sharp height estimate for compact hypersurfaces with constant
   $k$-mean curvature in warped product spaces},
   journal={Proc. Edinb. Math. Soc. (2)},
   volume={58},
   date={2015},
   number={2},
   pages={403--419},
   issn={0013-0915},
   review={\MR{3341446}},
   doi={10.1017/S0013091514000157},
}

\bib{gimeno}{article}{
   author={Gimeno, Vicent},
   title={Isoperimetric inequalities for submanifolds. Jellett-Minkowski's
   formula revisited},
   journal={Proc. Lond. Math. Soc. (3)},
   volume={110},
   date={2015},
   number={3},
   pages={593--614},
   issn={0024-6115},
   review={\MR{3342099}},
   doi={10.1112/plms/pdu053},
}

\bib{G-H}{book}{
   author={Griffiths, Phillip},
   author={Harris, Joseph},
   title={Principles of algebraic geometry},
   note={Pure and Applied Mathematics},
   publisher={Wiley-Interscience [John Wiley \& Sons], New York},
   date={1978},
   pages={xii+813},
   isbn={0-471-32792-1},
   review={\MR{507725}},
}


\bib{ILSA}{article}{
   author={Impera, Debora},
   author={Lira , Jorge},
   author={Pigola ,Stefano},
   author={Setti, Alberto},
   title={Height estimates for Killing Graphs},
   journal={J. Geometric Analysis, to appear},
}




\bib{meis}{article}{
   author={Meis, Theodor},
   title={Die minimale Bl\"{a}tterzahl der Konkretisierungen einer kompakten
   Riemannschen Fl\"{a}che},
   language={German},
   journal={Schr. Math. Inst. Univ. M\"{u}nster No.},
   volume={16},
   date={1960},
   pages={61},
   review={\MR{0147643}},
}

\bib{montiel-1}{article}{
   author={Montiel, Sebasti\'{a}n},
   title={Stable constant mean curvature hypersurfaces in some Riemannian
   manifolds},
   journal={Comment. Math. Helv.},
   volume={73},
   date={1998},
   number={4},
   pages={584--602},
   issn={0010-2571},
   review={\MR{1639892}},
   doi={10.1007/s000140050070},
}


\bib{oneill}{book}{
   author={O'Neill, Barrett},
   title={Semi-Riemannian geometry},
   series={Pure and Applied Mathematics},
   volume={103},
   note={With applications to relativity},
   publisher={Academic Press, Inc. [Harcourt Brace Jovanovich, Publishers],
   New York},
   date={1983},
   pages={xiii+468},
   isbn={0-12-526740-1},
   review={\MR{719023}},
}


\bib{petersen}{book}{
   author={Petersen, Peter},
   title={Riemannian geometry},
   series={Graduate Texts in Mathematics},
   volume={171},
   edition={2},
   publisher={Springer, New York},
   date={2006},
   pages={xvi+401},
   isbn={978-0387-29246-5},
   isbn={0-387-29246-2},
   review={\MR{2243772}},
}


\bib{ritore}{article}{
   author={Ritor\'{e}, Manuel},
   title={Constant geodesic curvature curves and isoperimetric domains in
   rotationally symmetric surfaces},
   journal={Comm. Anal. Geom.},
   volume={9},
   date={2001},
   number={5},
   pages={1093--1138},
   issn={1019-8385},
   review={\MR{1883725}},
   doi={10.4310/CAG.2001.v9.n5.a5},
}

\bib{ros-1}{article}{
   author={Ritor\'{e}, Manuel},
   author={Ros, Antonio},
   title={Stable constant mean curvature tori and the isoperimetric problem
   in three space forms},
   journal={Comment. Math. Helv.},
   volume={67},
   date={1992},
   number={2},
   pages={293--305},
   issn={0010-2571},
   review={\MR{1161286}},
   doi={10.1007/BF02566501},
}

\bib{ros-2}{article}{
   author={Ros, Antonio},
   title={One-sided complete stable minimal surfaces},
   journal={J. Differential Geom.},
   volume={74},
   date={2006},
   number={1},
   pages={69--92},
   issn={0022-040X},
   review={\MR{2260928}},
}

\bib{sal-sal}{article}{
   author={Salamanca, Juan J.},
   author={Salavessa, Isabel M. C.},
   title={Uniqueness of $\phi$-minimal hypersurfaces in warped
   product manifolds},
   journal={J. Math. Anal. Appl.},
   volume={422},
   date={2015},
   number={2},
   pages={1376--1389},
   issn={0022-247X},
   review={\MR{3269517}},
   doi={10.1016/j.jmaa.2014.09.028},
}

\bib{souam}{article}{
   author={Souam, Rabah},
   title={On stable constant mean curvature surfaces in $\mathbb{S}^2\times\mathbb{R}$ and $\mathbb{H}^2\times\mathbb{R}$},
   journal={Trans. Amer. Math. Soc.},
   volume={362},
   date={2010},
   number={6},
   pages={2845--2857},
   issn={0002-9947},
   review={\MR{2592938}},
   doi={10.1090/S0002-9947-10-04826-9},
}

\bib{Sp}{book}{
   author={Springer, George},
   title={Introduction to Riemann surfaces},
   publisher={Addison-Wesley Publishing Company, Inc., Reading, Mass.},
   date={1957},
   pages={viii+307},
   review={\MR{0092855}},
}




\end{biblist}
\end{bibdiv}

\end{document}